\theoremstyle{plain}
\newtheorem{theorem}{Theorem}[section]
\newtheorem{lemma}[theorem]{Lemma}
\newtheorem{proposition}[theorem]{Proposition}
\newtheorem{corollary}[theorem]{Corollary}
\numberwithin{equation}{section}
\theoremstyle{definition}
\newtheorem{definition}[theorem]{Definition}
\newtheorem{example}[theorem]{Example}
\newtheorem{remark}[theorem]{Remark}
\DeclareMathOperator{\Mod}{-Mod}
\DeclareMathOperator{\Hom}{Hom}
\DeclareMathOperator{\Ext}{Ext}
\DeclareMathOperator{\Tor}{Tor}
\DeclareMathOperator{\hd}{hd}
\DeclareMathOperator{\gd}{gd}
\DeclareMathOperator{\pd}{pd}
\DeclareMathOperator{\op}{op}
\DeclareMathOperator{\supp}{supp}
\DeclareMathOperator{\tht}{ht}
\DeclareMathOperator{\ts}{ts}
\DeclareMathOperator{\Ob}{Ob}
\DeclareMathOperator{\tor}{tor}
\newcommand{\Sh}{{\mathrm{Sh}}}
\newcommand{\BS}{\boldsymbol{\Sigma}}
\newcommand{\C}{{\mathscr{C}}}
\newcommand{\D}{{\mathscr{D}}}
\newcommand{\N}{{\mathscr{N}}}
\newcommand{\Y}{{\mathscr{Y}}}
\newcommand{\bfm}{{\mathbf{m}}}
\newcommand{\bfn}{{\mathbf{n}}}
\newcommand{\bfx}{{\mathbf{x}}}
\newcommand{\bfy}{{\mathbf{y}}}
\newcommand{\bfz}{{\mathbf{z}}}
\newcommand{\bfw}{{\mathbf{w}}}
\newcommand{\bfo}{{\mathbf{1}}}
\newcommand{\FI}{{\mathrm{FI}}}
\title{Representations of $\mathbb{N}^{\infty}$-type combinatorial categories}
\author{Zhenxing Di}
\address{School of Mathematical Sciences, Huaqiao University, Quanzhou 362021, China}
\email{dizhenxing@163.com}
\author{Liping Li}
\address{LCSM(Ministry of Education), School of Mathematics and Statistics, Hunan Normal University, Changsha 410081, China}
\email{lipingli@hunnu.edu.cn}
\author{Li Liang}
\address{Department of Mathematics, Lanzhou Jiaotong University, Lanzhou 730070, China}
\email{lliangnju@gmail.com}
\thanks{Z. Di was partly supported by NSF of China (Grant No. 12471034), the Scientific Research Funds of Huaqiao University (Grant No. 605-50Y22050) and the Fujian Alliance Of Mathematics (Grant No. 2024SXLMMS04); L. Li was partly supported by NSF of China (Grant No. 12171146); L. Liang was partly supported by NSF of China (Grant No. 12271230).}
\thanks{The authors greatly appreciate the anonymous reviewer for carefully checking the paper and providing many valuable comments and suggestions.}
\begin{document}

\begin{abstract}
In this paper we consider representations of certain combinatorial categories, including the poset $\D$ of positive integers and division, the Young lattice $\mathscr{Y}$ of partitions of finite sets, the opposite category of the orbit category $\mathscr{Z}$ of $(\mathbb{Z}, +)$ with respect to nontrivial subgroups, and the category $\mathscr{CI}$ of finite cyclic groups and injective homomorphisms. We describe explicit upper bounds for homological degrees of their representations, and deduce that finitely presented representations (resp., representations presented in finite degrees) over a field form abelian subcategories of the representation categories. We also give an explicit description for the category of sheaves over the ringed atomic site $(\mathscr{Z}, \, J_{at}, \, \underline{\mathbb{C}})$, and show that irreducible sheaves are parameterized by primitive roots of the unit.
\end{abstract}

\maketitle

\section{Introduction}

The poset $\D$ of positive integers and division is a well known example of lattices, and is closely related to quite a few important posets in combinatorics. For example, by the fundamental theorem of arithmetic, it is isomorphic to the poset $\mathbb{N}^{\infty}$ whose elements are sequence $\bfx = (x_1, x_2, \ldots)$ of natural numbers with all but finitely many entries being 0, and whose order is given by $\bfx \leqslant \bfy$ if $x_i \leqslant y_i$ for each $i$. Furthermore, the infinite symmetric group $S_{\infty} = \varinjlim_{n} S_n$ acts on $\mathbb{N}^{\infty}$ in a natural way and this action respects the partial order, so the quotient set consisting of orbits is again a poset, and is isomorphic to the Young latttice $\mathscr{Y}$ whose elements are partitions of $[n] = \{1, 2, \ldots, n\}$ with $n$ ranging over all positive integers and whose partial order is given by containment.

These posets and related combinatorial categories have been extensively considered in various areas such as combinatorics, combinatorial topology, operad theory and representation theory of symmetric groups or Lie algebras \cite{Dh, Du, Sta, St, Su, V, Zig}. Recently, their representations also attract attentions of experts in many fields. For instances, \textit{persistence modules} are widely applied in topological data analysis and related areas \cite{BL, BS, Ou}, and $\FI$-modules play a central role in representation stability theory \cite{CEFN}. Thus representation theory of posets and combinatorial categories is a valuable topic, not only because of its own interest, but also because of the rich application in other areas such as algebraic topology, commutative algebra, applied combinatorics and applied topology. As a particular example, we note that representations of $\D$ are precisely multi-graded modules of the polynomial ring with countably many variables, and correspondingly representations of $\mathscr{Y}$ are closely related to $S_{\infty}$-symmetric modules of this polynomial ring, which are actively studied by quite a few authors; see \cite{AH, Dr, LNNR, NR, NR1} and in particular the nice short survey paper \cite{JLR}.

In this paper we study representations of categories $\C$ sharing the following combinatorial features of $\D$, called \textit{combinatorial categories of $\mathbb{N}^{\infty}$-type}:
\begin{itemize}
\item $\C$ is a small skeletal \textit{EI category}, that is, every endomorphism in $\C$ is an isomorphism;

\item there is a functor $\iota: \D \to \C$ which is a bijection between the object sets;

\item for each pair of objects $x$ and $y$, the group $G_x = \C(x, x)$ acts freely and transitively on $\C(x, y)$ from the right side, while the group $G_y$ acts transitively on $\C(x, y)$ from the left side.
\end{itemize}
Besides the poset $\D$, typical examples include:
\begin{enumerate}
\item the opposite category of the orbit category $\mathscr{Z}$ of $(\mathbb{Z}, +)$ with respect to the family of nontrivial subgroups;

\item the opposite category of the category $\mathscr{CS}$ of finite cyclic groups and surjective group homomorphisms;

\item the category $\mathscr{CI}$ of finite cyclic groups and injective group homomorphisms.
\end{enumerate}

Let $\C$ be a combinatorial category of $\mathbb{N}^{\infty}$-type, and $k$ a commutative ring. A \textit{representation} of $\C$ (or called a \textit{$\C$-module}) is a covariant functor from $\C$ to $k \Mod$, the category of all $k$-modules. The category $\C \Mod$ is a rather big abelian category, in practice one may work in relatively smaller abelian subcategories consisting of $\C$-modules satisfying certain extra conditions. However, the usual notion of finitely generated modules is not a proper choice. Therefore, we want to find alternative conditions such that $\C$-modules satisfying them form an abelian subcategory of $\C \Mod$. The first result of this aspect is:

\begin{theorem} \label{main result 1}
Let $\C$ be a combinatorial category of $\mathbb{N}^{\infty}$-type and $k$ a field. If the endomorphism group $G_{\bfx}$ is finite for each $\bfx \in \Ob(\C)$, then the category of finitely presented $\C$-modules is abelian.
\end{theorem}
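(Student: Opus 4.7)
The plan is to verify the three standard closure conditions for the full subcategory of finitely presented $\C$-modules inside $\C\Mod$: closure under finite direct sums, under cokernels, and under kernels. The first is immediate, and the second is a routine diagram chase: given $f: M \to N$ with presentations $P_1 \to P_0 \to M \to 0$ and $Q_1 \to Q_0 \to N \to 0$ by finite sums of representables $k\C(\bfx,-)$, lifting $f$ along the projective cover $P_0 \to Q_0$ produces a presentation $P_0 \oplus Q_1 \to Q_0 \to \coker f \to 0$, again by finite sums of representables. Thus the substantive point is closure under kernels.

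I would reduce the kernel problem to the following \emph{coherence} statement: for every morphism $g: P \to Q$ between finite sums of representables, $\ker g$ is finitely generated. Granting this, a standard diagram chase shows that for $f: M \to N$ between finitely presented modules, $\ker f$ is the image under $P_0 \twoheadrightarrow M$ of the kernel of the map $P_0 \oplus Q_1 \to Q_0$ sending $(x,y) \mapsto \tilde f(x) - (Q_1 \to Q_0)(y)$, and is therefore finitely generated. A second application of coherence, this time to a projective cover of $\ker f$ composed with the inclusion into $M$, upgrades ``finitely generated'' to ``finitely presented''.

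To establish the coherence statement I would invoke the explicit upper bounds on homological degrees promised earlier in the paper, of the form $\hd(M) \leq \phi(\gd(M))$ for finitely generated $\C$-modules $M$, where $\gd$ denotes the generation degree and $\hd$ the generation degree of the first syzygy. Applied to $\mathrm{im}(g)$, this yields an explicit degree bound for the generators of $\ker g$. It remains to pass from bounded degree of generation to honest finite generation, which is where both hypotheses of the theorem enter. The axiom that $G_{\bfx}$ acts freely and transitively on $\C(\bfx,\bfy)$ from the right gives $|\C(\bfx,\bfy)|=|G_{\bfx}|$ whenever this hom-set is nonempty; with $G_{\bfx}$ finite and $k$ a field, each representable $k\C(\bfx,-)$ therefore takes finite-dimensional values at every object, and this property is inherited by every submodule of a finitely generated projective. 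Combined with the observation that in $\D \cong \mathbb{N}^{\infty}$ only finitely many objects lie below any given bound, generation in bounded degrees translates into generation by a finite set of elements.

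The principal obstacle is the homological degree bound $\hd(M) \leq \phi(\gd(M))$ itself, whose proof should constitute the main technical body of the paper and rely on a careful combinatorial analysis specific to $\mathbb{N}^{\infty}$-type categories. Once that estimate is in hand, the theorem follows from the above reduction together with the elementary finiteness count.
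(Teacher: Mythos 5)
Your overall strategy — reduce the abelianness of finitely presented $\C$-modules to a coherence statement (kernels of maps between finitely generated projectives are finitely generated) — is sound and matches the spirit of the paper, which indeed proves Theorem~\ref{main result 1} by establishing that $k\N$ is locally left coherent (Lemma~\ref{submodules of free module} and Remark~\ref{left coherence of kN}) and then transporting the result to $\C$ via the restriction functor $\downarrow_{\N}^{\C}$ and Lemmas~\ref{res preserves projectives} and~\ref{compare hds}.

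However, your final step contains a genuine gap. You claim that ``in $\D \cong \mathbb{N}^{\infty}$ only finitely many objects lie below any given bound,'' and conclude that a bound on generation degree upgrades automatically to finite generation. This is false for either norm considered in the paper: the objects $\bfo_1, \bfo_2, \bfo_3, \ldots$ all have sup norm $1$ and sum norm $1$, so any fixed norm bound admits infinitely many objects. (The true finiteness statement in $\D$ is that any fixed object has only finitely many predecessors, but a generation-degree bound does not confine the generators beneath a single object.) Indeed, the paper's Example~\ref{counterexample} exhibits an $\N$-module generated and presented in finite (sum-norm) degree whose second homological degree is infinite — precisely because bounded norm does not imply finitely many supporting objects. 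The paper's Lemma~\ref{submodules of free module} therefore does \emph{not} deduce finite presentation from the inequality $\hd_1(V) \leqslant 2\gd(V)$; it gives a separate, more refined argument showing that the first syzygy is generated on the finitely many pairwise joins $\bfx \vee \bfy$ with $\bfx,\bfy$ running over the finite set $\supp(H_0(V))$ — a finiteness that comes from the lattice structure, not from counting objects of bounded norm. Your proposal needs this combinatorial step; the norm-counting shortcut does not work.
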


There are many interesting finitely generated $\C$-modules (even the simple modules) are not finitely presented. Moreover, for practical purpose people frequently have to deal with infinitely generated $\C$-modules. In this case we introduce a few homological numerical invariants and impose some finiteness conditions on them. It turns out that $\C$-modules satisfying these conditions also form an abelian subcategory of $\C \Mod$.

Let us give some details about these finiteness conditions. Since the object set $\Ob(\C)$ can be identified with the set of finite sequences $\bfx = (x_1, \ldots, x_n)$ of natural numbers, we impose the \textit{sup norm} on $\Ob(\C)$ by setting $\| \bfx \|$ to be the maximal entry in the sequence. Given a $\C$-module $V$, we determine the smallest subset $S \subseteq \Ob(\C)$ such that $V$ is generated by its values on objects in $S$. The zeroth \textit{homological degree} $\hd_0(V)$ is defined to be the supremum of those $\| \bfx \|$ for $\bfx \in S$. By a standard homological process we can define $\hd_i(V)$ for all $i \geqslant 0$. We say that $V$ is \textit{presented in finite degrees} if both $\hd_0(V)$ and $\hd_1(V)$ are finite.

\begin{theorem} \label{main result 2}
Let $\C$ be a combinatorial category of type $\mathbb{N}^{\infty}$, $k$ a commutative ring, $\| \bullet \|$ the sup norm on $\Ob(\C)$, and $V$ a $\C$-module. Then
\[
\hd_s(V) \leqslant \max \{\hd_0(V), \, \hd_1(V) \}
\]
for $s \geqslant 0$. In particular, the category of $\C$-modules presented in finite degrees is abelian.
\end{theorem}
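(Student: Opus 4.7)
The plan is to establish the bound by induction on $s$. The cases $s = 0, 1$ are tautological, and the entire combinatorial content of the theorem is concentrated in the case $s = 2$. Once we prove $\hd_2(V) \leq \max\{\hd_0(V), \hd_1(V)\}$ for an arbitrary $V$, the general case follows quickly: for $s \geq 3$, take a minimal projective cover $P_0 \twoheadrightarrow V$ with kernel $K$, so that dimension shifting yields $\hd_s(V) = \hd_{s-1}(K)$, and the inductive hypothesis applied to $K$ gives $\hd_{s-1}(K) \leq \max\{\hd_0(K), \hd_1(K)\} = \max\{\hd_1(V), \hd_2(V)\} \leq \max\{\hd_0(V), \hd_1(V)\}$, where the last inequality uses the case $s = 2$.

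All the real work is therefore proving the $s = 2$ statement: second syzygies of $V$ live in degrees at most $d := \max\{\hd_0(V), \hd_1(V)\}$. Let $P_1 \to P_0 \to V \to 0$ be the start of a minimal free resolution with both terms supported in degrees $\leq d$, and let $K_1 = \ker(P_1 \to P_0)$; we must show $\hd_0(K_1) \leq d$. The key combinatorial input is the structure of $\C(\bfx, \bfy)$ as a $G_{\bfx}$-torsor: once a basepoint is fixed, $k[\C(\bfx, \bfy)] \cong k G_{\bfx}$ as a right $k G_{\bfx}$-module, with a compatible transitive left $kG_{\bfy}$-action. A second-syzygy element of $K_1(\bfy)$ thus takes the form of a tuple of group-algebra elements subject to explicit linear constraints. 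The technical heart of the argument, which I expect to be the main obstacle, is to show that any such element occurring at $\bfy$ with $\|\bfy\| > d$ is already generated under the $\C$-action by analogous elements occurring at some $\bfz < \bfy$ with $\|\bfz\| \leq d$. This reduction should use the lattice structure of $\D$ (specifically, the existence of greatest common divisors, i.e.\ coordinate-wise minima, playing the role of meets) together with the transitivity of the $G_{\bfy}$-action on $\C(\bfz, \bfy)$ to ``push'' any relation in high degree down along the $\D$-order; because $\hd_1(V) \leq d$, every minimal first syzygy already lives at a vertex of norm $\leq d$, and the combinatorics of $\mathbb{N}^{\infty}$-type morphisms should let one similarly lift second syzygies.

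Granted the regularity bound, the abelian-subcategory statement follows by standard homological manipulations. Let $\mathcal{A}$ be the full subcategory of $\C$-modules $V$ with $\hd_0(V), \hd_1(V) < \infty$. For a morphism $f: V \to W$ in $\mathcal{A}$, decompose it as $V \twoheadrightarrow \mathrm{im}\,f \hookrightarrow W$ and apply the $\Tor$ long exact sequence to both resulting short exact sequences, yielding the standard inequalities $\hd_i(A) \leq \max\{\hd_i(B), \hd_{i+1}(C)\}$ for $0 \to A \to B \to C \to 0$. Using the regularity bound to cap every $\hd_{i+1}$ in terms of $\max\{\hd_0, \hd_1\}$, a brief cascade shows that $\hd_0$ and $\hd_1$ of $\ker f$, $\mathrm{im}\,f$, and $\coker f$ are all finite, so $\mathcal{A}$ is closed under kernels and cokernels in $\C \Mod$ and is therefore abelian.
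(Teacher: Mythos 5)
Your reduction of the general case to $s=2$ by dimension shifting is correct in spirit (though "minimal projective cover" is more than is needed or available over an arbitrary commutative ring $k$; the paper just takes $P$ with $\gd(P)=\gd(V)$ and works with inequalities rather than equalities). The problem is that you never actually prove the $s=2$ case — you describe what a proof should accomplish ("push any relation in high degree down along the $\D$-order") and gesture at ingredients ("the lattice structure of $\D$", "the transitivity of the $G_{\bfy}$-action"), but there is no argument. Since you yourself identify this as "the technical heart" and "the main obstacle", this is not a minor omission: it is the entire content of the theorem.

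Moreover, the ingredients you propose don't match what is actually needed. The paper's proof for the sup norm does not use meets or greatest common divisors at all (those appear only for the sum-norm arguments in Lemma \ref{submodules of free module} and the Young-lattice section). The decisive observation is much more elementary: if $\gd(V)=n$ and $x_i \geqslant n$, then any $\bfy$ with $\|\bfy\|\leqslant n$ and $\bfy < \bfx + \bfo_i$ already satisfies $\bfy \leqslant \bfx$, which forces the map $V_{\bfx}\to V_{\bfx+\bfo_i}$ to be surjective (Lemma \ref{key observation}). From this one first derives $\hd_s(V)\leqslant\gd(V)$ for \emph{torsion-free} $\N$-modules (Lemma \ref{hd of torsion free}) and then $\hd_s(V)\leqslant\pd(V)$ in general (Proposition \ref{bound hds}). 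The passage from $\N$ to a general combinatorial category $\C$ of type $\mathbb{N}^\infty$ is not done by a direct analysis of $\C(\bfx,\bfy)$ as a $G_{\bfx}$-torsor, as you propose, but by the restriction functor $\downarrow_{\N}^{\C}$, which sends $k\C(\bfx,-)$ to $k\N(\bfx,-)^{|G_{\bfx}|}$ and hence preserves $\gd$ and $\pd$ (Lemmas \ref{res preserves projectives} and \ref{compare hds}); this reduces the $\C$-statement to the already-proved $\N$-statement without any further combinatorics. Working directly in $\C$ as you suggest would require reproving everything with the group actions in play, which is exactly the complexity the paper's reduction avoids. Your final paragraph on closure under kernels and cokernels is fine, modulo the missing bound.
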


At a first glance this result seems unexpectedly simple. This happens because we define $\| \bfx \|$ to be the maximal entry in $\bfx$, which is very weak in the sense that too many distinct objects share the same norm. Instead, if we define homological degrees with respect to strong norms, for instance, the \textit{sum norm} by setting $\| \bfx \|$ to be the sum of all entries in $\bfx$, then we cannot bound higher homological degrees in terms of the first two; see Example \ref{counterexample}. But for the Young lattice (which as a quotient category of $\mathbb{N}^{\infty}$ is not a combinatorial category of type $\mathbb{N}^{\infty}$), we do get an affirmative answer.

\begin{theorem} \label{main result 3}
Let $\mathscr{Y}$ be the Young lattice, $k$ a field, and $\| \bullet \|$ the sum norm on $\Ob(\mathscr{Y})$. If $s \geqslant 2$ and $\hd_{s-1}(V) > 0$, then
\[
\hd_s(V) \leqslant \hd_{s-1}(V) (\ln(\hd_{s-1}(V)) + 1).
\]
In particular, the category of $\mathscr{Y}$-modules presented in finite degrees is abelian.
\end{theorem}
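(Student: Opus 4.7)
My plan is to prove the homological inequality first and then deduce the abelian subcategory conclusion by a standard long-exact-sequence argument. Setting $d = \hd_{s-1}(V)$ and passing to the $(s-1)$-th syzygy $W = \Omega^{s-1}V$, one has $\hd_0(W) \leqslant d$ and the task reduces to bounding $\hd_1(W) \leqslant d(\ln d + 1)$, which equals $\hd_s(V)$.

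The approach is to exploit the quotient functor $\pi: \mathbb{N}^{\infty} \to \mathscr{Y}$ induced by the $S_{\infty}$-action together with Theorem \ref{main result 2} applied to $\mathbb{N}^{\infty}$-modules in the sup norm. A generator $\lambda$ of $W$ with $|\lambda| \leqslant d$ automatically satisfies $\lambda_1 \leqslant d$, so any representative in $\mathbb{N}^{\infty}$ has sup norm $\leqslant d$. Although $\pi^{*}$ is exact but does not preserve projectives, one can analyze the first syzygy of $W$ by studying the relations in a minimal surjection $\bigoplus_{|\lambda|\leqslant d} P(\lambda) \twoheadrightarrow W$: the join in $\mathscr{Y}$ is computed componentwise after sorting the representatives in decreasing order, so $|\lambda \vee \mu| \leqslant |\lambda| + |\mu|$, with analogous but finer bounds controlling iterated joins.

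The heart of the proof, which I expect to be the main obstacle, is a combinatorial estimate showing that the minimal spanning joins witnessing the relations of $W$ have size at most $dH_d$, where $H_d = 1 + \frac{1}{2} + \cdots + \frac{1}{d} \leqslant \ln d + 1$ is the harmonic number. This $H_d$ factor should arise from bundling the generating partitions by the distinct part values they contain: a part value $i$ can occur with total multiplicity at most $\lfloor d/i \rfloor$ in a partition of size $\leqslant d$, and summing over $i$ yields the Dirichlet divisor-sum bound $\sum_{i=1}^{d} \lfloor d/i \rfloor \leqslant d H_d$. Making this \emph{efficient join} counting precise---ruling out the possibility that iterated joins inflate the sum norm above $dH_d$---is the delicate technical step, where one must argue that relations of greater sum norm are always consequences of shorter ones.

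For the abelian subcategory conclusion, iterating the inequality shows that $\hd_s(V)$ is finite for every $s$ whenever both $\hd_0(V)$ and $\hd_1(V)$ are finite. Applied to a short exact sequence $0 \to V' \to V \to V'' \to 0$ of $\mathscr{Y}$-modules presented in finite degrees, the standard $\Tor$ long exact sequence yields $\hd_0(V') \leqslant \max\{\hd_0(V), \hd_1(V'')\}$ and $\hd_1(V') \leqslant \max\{\hd_1(V), \hd_2(V'')\}$, both of which are finite, the latter precisely because the main inequality guarantees $\hd_2(V'') < \infty$. Closure under kernels and cokernels follows by the same style of argument, establishing that the category of $\mathscr{Y}$-modules presented in finite degrees is abelian.
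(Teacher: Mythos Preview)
Your reduction to the syzygy and your identification of the harmonic bound $\sum_{i=1}^{d}\lfloor d/i\rfloor \leqslant d(\ln d+1)$ are exactly the two ingredients the paper uses. However, two aspects of your plan diverge from the paper, one unnecessarily and one in a way that leaves a gap.

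First, the detour through $\pi^{\ast}:\mathscr{Y}\Mod\to\mathbb{N}^{\infty}\Mod$ and Theorem~\ref{main result 2} is a dead end, and the paper makes no use of it. As you note yourself, $\pi^{\ast}$ does not preserve projectives, so the sup-norm bound for $\mathbb{N}^{\infty}$ does not transfer to a statement about $\hd_{1}$ of your $\mathscr{Y}$-syzygy. The paper works entirely inside $\mathscr{Y}$.

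Second, what you flag as ``the delicate technical step'' is in fact not delicate, but it does require a structural observation you have not made: the syzygy $W=\Omega^{s-1}V$ is \emph{torsion free}, being a submodule of a projective. The paper builds the surjection $Q=\bigoplus_{\lambda\in S}W(\lambda)\twoheadrightarrow W$ with $S=\supp(H_{0}(W))$ and lets $K=\ker(Q\to W)$. For any $\mu\in\Ob(\mathscr{Y})$, set $T=\{\lambda\in S:\lambda\leqslant\mu\}$ and let $\tilde{\mu}=\bigvee_{\lambda\in T}\lambda$. Because $W$ is torsion free, the map $W_{\tilde{\mu}}\to W_{\mu}$ is injective, so any relation $\sum_{\lambda\in T}W_{\lambda\to\mu}(w_{\lambda})=0$ in $W_{\mu}$ already holds at $\tilde{\mu}$; hence $K_{\tilde{\mu}\to\mu}$ is surjective and $\mu\notin\supp(H_{0}(K))$ unless $\mu=\tilde{\mu}$. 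There is no need to worry about ``iterated joins inflating the sum norm'': every element of $\supp(H_{0}(K))$ is dominated by the \emph{single} partition $\bigvee_{\lambda\in S}\lambda$, which in turn is dominated by the join of \emph{all} partitions of size at most $d$, namely $(\lfloor d/1\rfloor,\lfloor d/2\rfloor,\ldots,\lfloor d/d\rfloor)$. Its sum norm is at most $dH_{d}\leqslant d(\ln d+1)$, giving $\hd_{1}(W)\leqslant\gd(K)\leqslant d(\ln d+1)$ in one stroke.

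Your argument for the abelian-subcategory conclusion is correct and matches the paper's.
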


Another important subcategory of $\C \Mod$ which has a natural abelian structure (but not inherited from that of $\C \Mod$) is the category $\Sh(\C^{\op}, \, J, \, \underline{k})$ of sheaves over $\C^{\op}$ equipped with a certain Grothendieck topology $J$ and the constant structure sheaf $\underline{k}$. The main motivation for us to consider this category lies in three aspects: the sheaf category over orbit categories equipped with the atomic Grothendieck topology $J_{at}$ is closely related to continuous representations of topological groups (see for instances \cite{DLLX, Jo, MM}); the sheaf theoretic approach can provide a novel viewpoint for representations, for instance, the complicated shift functor constructed in Section 3 turns out to be the sheafification functor; and finally, for some examples studied in this paper, the sheaf category is much simpler than the representation category. For instance, if $\C$ is a poset, then $\Sh(\C^{\op}, \, J_{at}, \, \underline{k})$ is equivalent to $k \Mod$. Although the situation becomes more complicated for other examples considered in this paper, we still obtain the following result for a specific category.

\begin{theorem} \label{main result 4}
Let $k$ be an algebraically closed field with characteristic 0, $\mathscr{Z}$ the orbit category of $(\mathbb{Z}, +)$ with respect to the family of nontrivial subgroups, and $J_{at}$ the atomic topology on $\mathscr{Z}$. Then $\Sh(\mathscr{Z}, \, J_{at}, \, \underline{k})$ is a semisimple category, and irreducible sheaves are parameterized by primitive roots of the unit in $k$.
\end{theorem}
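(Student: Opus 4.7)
My plan is to identify $\Sh(\mathscr{Z}, J_{at}, \underline{k})$ explicitly with the category of discrete $k$-linear representations of the profinite group $\hat{\mathbb{Z}}$, and then apply Maschke's theorem together with the usual eigenspace decomposition for cyclic groups. The first step will be to unwind the atomic sheaf axiom. Morphisms $\mathbb{Z}/n\mathbb{Z} \to \mathbb{Z}/m\mathbb{Z}$ in $\mathscr{Z}$ exist precisely when $m \mid n$ and are parameterized by shifts $a \in \mathbb{Z}/m\mathbb{Z}$; the right Ore condition on $\mathscr{Z}$ holds via least common multiples, so $J_{at}$ is well-defined with covering sieves equal to the nonempty sieves, and the sheaf axiom reduces to the condition on principal sieves $\langle f \rangle$. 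Using that two morphisms $h_1, h_2 \colon \mathbb{Z}/r\mathbb{Z} \to \mathbb{Z}/n\mathbb{Z}$ with shifts $b_1, b_2$ satisfy $f h_1 = f h_2$ iff $b_1 \equiv b_2 \pmod{m}$, a direct computation shows that $F$ is a sheaf if and only if for every $m \mid n$ the transition map $F(\mathbb{Z}/m\mathbb{Z}) \to F(\mathbb{Z}/n\mathbb{Z})$ identifies $F(\mathbb{Z}/m\mathbb{Z})$ with the invariants $F(\mathbb{Z}/n\mathbb{Z})^{m\mathbb{Z}/n\mathbb{Z}}$ under the subgroup $m\mathbb{Z}/n\mathbb{Z} \subseteq \mathbb{Z}/n\mathbb{Z}$.

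Next I would assemble this into an equivalence with discrete $k$-linear $\hat{\mathbb{Z}}$-representations. For a sheaf $F$ the filtered colimit $V = \varinjlim_n F(\mathbb{Z}/n\mathbb{Z})$ along the divisibility poset is a $k$-module; the injectivity from the first step gives $V = \bigcup_n F(\mathbb{Z}/n\mathbb{Z})$, and the compatible $\mathbb{Z}/n\mathbb{Z}$-actions patch into a discrete $\hat{\mathbb{Z}}$-action on $V$ with $F(\mathbb{Z}/n\mathbb{Z}) = V^{n\hat{\mathbb{Z}}}$. Conversely, any discrete $k[\hat{\mathbb{Z}}]$-module $V$ yields a sheaf via $F(\mathbb{Z}/n\mathbb{Z}) := V^{n\hat{\mathbb{Z}}}$, and the two constructions are mutually inverse. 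Under the hypotheses on $k$, Maschke's theorem gives $k[\mathbb{Z}/n\mathbb{Z}] \cong \prod_{\zeta^n = 1} k$, so each $V^{n\hat{\mathbb{Z}}}$ decomposes as a direct sum of eigenspaces indexed by $n$-th roots of unity in $k$. Writing $V^{(\zeta)} = \{v \in V : 1 \cdot v = \zeta v\}$ for each root of unity $\zeta \in k$ and gluing these decompositions across $n$, I obtain $V = \bigoplus_{\zeta} V^{(\zeta)}$, where each $V^{(\zeta)}$ is a direct sum of copies of the one-dimensional character sheaf $k_{\zeta}$ (the sheaf corresponding to $V = k$ on which $\hat{\mathbb{Z}}$ acts by $\zeta$).

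This exhibits every sheaf as a direct sum of the simple sheaves $\{k_{\zeta}\}$, proving that $\Sh(\mathscr{Z}, J_{at}, \underline{k})$ is semisimple, and the simple objects are in natural bijection with the roots of unity in $k$; since every root of unity is a primitive root of a unique order, this is the claimed parametrization by primitive roots of the unit. The main obstacle I anticipate is the first step: carefully converting the atomic sheaf axiom into the explicit invariants condition via the combinatorics of shifts and the right Ore property. Everything downstream is then standard decomposition theory for smooth representations of the profinite abelian group $\hat{\mathbb{Z}}$.
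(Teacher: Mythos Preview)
Your approach is correct and genuinely different from the paper's. The paper never passes to $\hat{\mathbb{Z}}$; instead it stays inside $\mathscr{Z}^{\op}\Mod$ and uses the characterization from \cite{DLLX} that sheaves are exactly the torsion free $\mathscr{Z}^{\op}$-modules $V$ with $\Ext^1(T,V)=0$ for all torsion $T$. It then builds, for each primitive $n$-th root $\xi$, an explicit $\mathscr{Z}^{\op}$-module $\underline{L}_{\xi}$ (the submodule of $k\mathscr{Z}(-,\bfn)$ generated by the one-dimensional $kG_n$-module $L_\xi$), proves $\underline{L}_{\xi}$ is an injective torsion free module by a Schur's-lemma reduction to an auxiliary groupoid-like category $\mathscr{E}$, shows conversely that every indecomposable torsion free injective is one of these, and finally argues semisimplicity by exhibiting the $\underline{L}_{\xi}$ as a generating set of simple injectives.

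Your route---unwinding the atomic sheaf axiom to the invariants condition $F(\mathbb{Z}/m\mathbb{Z})\cong F(\mathbb{Z}/n\mathbb{Z})^{m\mathbb{Z}/n\mathbb{Z}}$, then taking the colimit to get a discrete $\hat{\mathbb{Z}}$-module and decomposing by characters---is shorter and more conceptual: it invokes the general principle (hinted at in the paper's introduction) that atomic sheaves on an orbit category are smooth representations of the associated profinite group, after which everything is classical character theory. The paper's approach, by contrast, is self-contained within its own torsion-theoretic framework and yields the simple sheaves explicitly as concrete $\mathscr{Z}^{\op}$-submodules of representables, without invoking profinite completions. Your sheaf-condition computation (reducing to automorphisms in $m\mathbb{Z}/n\mathbb{Z}$ via the shift combinatorics) and the patching of the $\mathbb{Z}/n\mathbb{Z}$-actions into a $\hat{\mathbb{Z}}$-action are both sound; the only thing to be slightly careful about when writing it up is checking that the canonical transition maps are equivariant for the topological generator $1\in\hat{\mathbb{Z}}$, which follows from the identity $f\tau=\sigma f$ for lifts of shifts.
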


At this moment we have not obtained similar results for $\Sh(\mathscr{CS}, \, J_{at}, \, \underline{k})$ or $\Sh(\mathscr{CI}^{\op}, \, J_{at}, \, \underline{k})$. The main obstacle is that an explicit description for automorphisms groups of finite cyclic groups is not available.

This paper is organized as follows: in the second section we introduce categories of type $\mathbb{N}^{\infty}$ and some homological invariants of their representations. Section 3 is devoted to representations of $\mathbb{N}^{\infty}$. We give an upper bound for homological degrees, and investigate shift functors in details. In Section 4 we prove Theorems \ref{main result 1} and \ref{main result 2} for combinatorial categories $\C$ of type $\mathbb{N}^{\infty}$, relying on the assumption that $\mathbb{N}^{\infty}$ can be viewed as a subcategory of $\C$ and the observation that the restriction functor $\C \Mod \to \mathbb{N}^{\infty} \Mod$ has a nice property. Representations of Young lattice is considered in Section 5, where we prove Theorem \ref{main result 3}. A sheaf theoretic approach as well as a proof of Theorem \ref{main result 4} is given in the last section.

\section{Preliminaries}

In this section we introduce combinatorial categories of type $\mathbb{N}^{\infty}$, describe some examples, and propose a few important invariants for their representations.

\subsection{Combinatorial categories of type $\mathbb{N}^{\infty}$}

Let $\C$ be a small skeletal EI category; that is, every endomorphism in $\C$ is an isomorphism. Then $G_x = \C(x, x)$ is a group for every object $x$ in $\C$. Furthermore, the binary relation $\preccurlyeq$ on $\Ob(\C)$ given by $ x \preccurlyeq y$ if $\C(x, y) \neq \emptyset$ is a partial order.

\begin{definition}
A small skeletal EI category $\C$ is said to be a combinatorial category of type $\mathbb{N}^{\infty}$ if the following conditions hold:
\begin{enumerate}
\item there is a functor $\iota: \mathbb{N}^{\infty} \to \C$ which is a bijection between the object sets;

\item the group $G_x$ acts regularly (i.e., freely and transitively) on $\C(x, y)$ from the right side for $x, y \in \Ob(\C)$;

\item the group $G_y$ acts transitively on $\C(x, y)$ from the left side for $x, y \in \Ob(\C)$
\end{enumerate}
\end{definition}

By the first condition, we can identify the underlying poset $(\Ob(\C), \preccurlyeq)$ of $\C$ and $\mathbb{N}^{\infty}$, so in the rest of this paper by abuse of notation we frequently denote objects in $\C$ by $\bfx$ with $\bfx \in \mathbb{N}^{\infty}$.

Recall the following definition of left Ore condition: given morphisms $f: \bfx \to \bfy$ and $g: \bfx \to \bfz$ in $\C$, there exists an object $\bfw$ as well as morphisms $f': \bfy \to \bfw$ and $g': \bfz \to \bfw$ such that the diagram commutes
\[
\xymatrix{
\bfx \ar[r]^-f \ar[d]^-g & \bfy \ar[d]^-{f'} \\
\bfz \ar[r]^-{g'} & \bfw.
}
\]
Dually, one can define right Ore condition.

The following facts can be easily deduced from definitions.

\begin{lemma} \label{facts}
Let $\C$ be a combinatorial category of type $\mathbb{N}^{\infty}$. Then one has:
\begin{enumerate}
\item every morphism in $\C$ is a monomorphism;

\item $\C$ satisfies the left Ore condition.
\end{enumerate}
\end{lemma}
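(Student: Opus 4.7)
\textbf{Plan for Lemma \ref{facts}.}

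For part (1), my plan is to exploit the free right action of $G_{\bfw}$ on hom-sets out of $\bfw$. Suppose $f\colon \bfx \to \bfy$ and $g, h\colon \bfw \to \bfx$ satisfy $fg = fh$. Since $G_{\bfw}$ acts regularly on $\C(\bfw, \bfx)$ from the right, there is a unique $\sigma \in G_{\bfw}$ with $h = g \circ \sigma$. Then $fh = f(g\sigma) = (fg)\sigma$, so $fg = (fg)\sigma$ in $\C(\bfw, \bfy)$. The freeness of the right $G_{\bfw}$-action on $\C(\bfw, \bfy)$ forces $\sigma = 1$, whence $h = g$. No subtlety is involved; the only ingredient is the right regularity axiom applied to the two hom-sets out of the source of $g$ and $h$.

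For part (2), the plan is to produce the required commuting square by first finding a common target in $\mathbb{N}^\infty$ and then correcting with an automorphism using the left transitivity axiom. Given $f\colon \bfx \to \bfy$ and $g\colon \bfx \to \bfz$, let $\bfw = \bfy \vee \bfz$ denote the join in $\mathbb{N}^\infty$. Since the partial order on $\C$ induced by nonemptyness of hom-sets agrees with that of $\mathbb{N}^\infty$ (via the functor $\iota$), both $\C(\bfy, \bfw)$ and $\C(\bfz, \bfw)$ are nonempty. Pick any $\alpha\colon \bfy \to \bfw$ and $\beta\colon \bfz \to \bfw$. The compositions $\alpha f$ and $\beta g$ both lie in $\C(\bfx, \bfw)$, so by the transitive left $G_{\bfw}$-action on $\C(\bfx, \bfw)$ there exists $\tau \in G_{\bfw}$ with $\tau(\alpha f) = \beta g$. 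Setting $f' := \tau\alpha\colon \bfy \to \bfw$ and $g' := \beta\colon \bfz \to \bfw$ gives $f' f = g' g$, as required.

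The only mildly delicate point, which I would address at the start of the proof of (2), is to justify that $\bfx \preccurlyeq \bfy$ in $\C$ coincides with $\bfx \leqslant \bfy$ in $\mathbb{N}^\infty$, so that the join $\bfy \vee \bfz$ taken in $\mathbb{N}^\infty$ really witnesses nonempty hom-sets $\C(\bfy, \bfw)$ and $\C(\bfz, \bfw)$. One direction is immediate from functoriality of $\iota$; the other follows from the remark preceding the lemma that identifies the two posets. Beyond this, the proof is a direct unpacking of the three defining axioms of an $\mathbb{N}^\infty$-type combinatorial category, and I do not anticipate any serious obstacle.
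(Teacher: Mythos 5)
Your proof is correct and follows essentially the same route as the paper: part (1) expresses the two parallel morphisms as differing by an automorphism via the right-regular action and then invokes freeness on the second hom-set, and part (2) picks arbitrary morphisms into a common upper bound and corrects by an automorphism using left transitivity on $\C(\bfx, \bfw)$. You also quietly fix a small slip in the paper (which says ``a common upper bound of $\bfx$ and $\bfz$'' where it should be $\bfy$ and $\bfz$); otherwise the arguments coincide.
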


\begin{proof}
(1) Given two morphisms $\alpha$ and $\alpha'$ from $\bfx$ to $\bfy$ and a morphism $\beta$ from $\bfy$ to $\bfz$ such that $\beta \alpha = \beta \alpha'$, since $G_{\bfx}$ acts regularly on $\C(\bfx, \bfy)$, we can find an element $g \in G_{\bfx}$ such that $\alpha' = \alpha g$. Consequently, $\beta \alpha = \beta \alpha g$. But $G_{\bfx}$ also acts regularly on $\C(\bfx, \bfz)$, this happens if and only if $g$ is the identity in $G_{\bfx}$, so $\alpha = \alpha'$.

(2) Since $\C$ is of type $\mathbb{N}^{\infty}$, we can let $\bfw$ be a common upper bound of $\bfx$ and $\bfz$ and take $f': \bfy \to \bfw$ and $g_1: \bfz \to \bfw$ to be arbitrary morphisms. Since $G_{\bfw}$ acts transitively on $\C(\bfx, \, \bfw)$, we can find a certain $\delta \in G_{\bfw}$ such that $\delta \circ g_1 \circ g = f' \circ f$. Then $\delta \circ g_1$ is the desired $g'$.
\end{proof}

Here are some interesting examples of combinatorial categories of type $\mathbb{N}^{\infty}$.

\begin{example}
Let $\D$ be the poset of positive integers and division. This poset is isomorphic to $\mathbb{N}^{\infty}$. Indeed, we can order all primes as follows $p_1 < p_2 < p_3 < \ldots$ and the fundamental theorem of arithmetic asserts that every positive integer $n$ uniquely corresponds a sequence $(x_1, \, x_2, \, \ldots)$ such that $n = p_1^{x_1} p_2^{x_2} \ldots$. It is clear that the assignment $n \mapsto (x_1, \, x_2, \, \ldots)$ gives the desired isomorphism of posets.
\end{example}

\begin{example} \label{orbit cat of Z}
Let $G = (\mathbb{Z}, +)$ and $\mathcal{H}$ the set of nontrivial subgroups of $G$. Define $\mathscr{Z}$ to be the orbit category of $G$ with respect to $\mathcal{H}$. Explicitly, objects are cosets $\bfn = \mathbb{Z}/n\mathbb{Z}$ with $n \in \mathbb{N}_+$, and morphisms $\bfm$ to $\bfn$ are $G$-equivariant maps. We check that $\mathscr{Z}^{\op}$ is a combinatorial category of type $\mathbb{N}^{\infty}$.
\begin{enumerate}
\item $\mathscr{Z}(\bfn, \, \bfn)$ is a cyclic group of order $n$, so $\mathscr{Z}$ is a small skeletal EI category, and so is $\mathscr{Z}^{\op}$.

\item There is an embedding functor $\iota: \D \to \mathscr{Z}^{\op}$ sending a morphism $m \to n$ in $\D$ to the following $G$-equivariant map
\[
\mathbb{Z}/n\mathbb{Z} \to \mathbb{Z}/m\mathbb{Z}, \quad 1 + n\mathbb{Z} \mapsto 1 + m \mathbb{Z}
\]
in $\mathscr{Z}$, which is a morphism in $\mathscr{Z}^{\op} (\bfm, \bfn)$. Clearly, this functor gives a bijection on object sets.

\item A direct computation shows that $\mathscr{Z} (\bfn, \bfn)$ (resp., $\mathscr{Z} (\bfm, \bfm)$) acts regularly (resp., transitively) on $\mathscr{Z} (\bfm, \, \bfn)$ from the left (resp., right) side when $n$ divides $m$, so $\mathscr{Z}^{\op} (\bfn, \bfn)$ (resp., $\mathscr{Z}^{\op} (\bfm, \bfm)$) acts regularly (resp., transitively) on $\mathscr{Z}^{\op} (\bfn, \, \bfm)$ from the right (resp., left) side.
\end{enumerate}
\end{example}

\begin{example} \label{cyclic groups and injections}
Let $\mathscr{CI}$ be the category whose objects are finite cyclic groups $\bfn = \mathbb{Z}/n\mathbb{Z}$ with $n \in \mathbb{N}_+$, and morphisms are injective group homomorphisms. We check that $\mathscr{CI}$ is a combinatorial category of type $\mathbb{N}^{\infty}$.
\begin{enumerate}
\item There is a natural embedding functor from $\D$ to $\mathscr{CI}$ sending a morphism $m \to n$ in $\D$ to the following group homomorphism
\[
\mathbb{Z}/m\mathbb{Z} \to \mathbb{Z}/n\mathbb{Z}, \quad 1 + m\mathbb{Z} \mapsto \frac{n}{m} + n\mathbb{Z}.
\]

\item When $n$ divides $m$, $\bfm$ contains a unique subgroup isomorphic to $\bfn$, so $\mathscr{CI}(\bfn, \, \bfn)$ acts regularly on $\mathscr{CI}(\bfn, \bfm)$ from the right side.

\item When $n$ divides $m$, every automorphism of $\bfn$ can extend to an automorphism of $\bfm$, so $\mathscr{CI}(\bfm, \, \bfm)$ acts transitively on $\mathscr{CI}(\bfn, \bfm)$ from the left side.
\end{enumerate}
\end{example}

\begin{example} \label{cyclic groups and surjections}
Let $\mathscr{CS}$ be the category whose objects are finite cyclic groups $\bfn = \mathbb{Z}/n\mathbb{Z}$ with $n \in \mathbb{N}_+$, and morphisms are surjective group homomorphisms. The reader can check that $\mathscr{CS}^{\op}$ is a combinatorial category of type $\mathbb{N}^{\infty}$. Note that $\mathscr{CS}$ is different from the orbit category $\mathscr{Z}$ in Example \ref{orbit cat of Z} since here we regard $\bfn$ as a $\mathbb{Z}$-module, while in Example \ref{orbit cat of Z} we only regard $\bfn$ as a $\mathbb{Z}$-set of the additive group $\mathbb{Z}$.
\end{example}

From now on let $\C$ be a combinatorial category of type $\mathbb{N}^{\infty}$. A function $\| \bullet \|: \Ob(\C) \to \mathbb{N}$ is called a \textit{norm} if it satisfies the following conditions:
\begin{enumerate}
\item $\| \bfx \| \geqslant 0$ and the equality holds if and only if $\bfx = \boldsymbol{0}$, the sequence whose each entry is 0;

\item $\| a \bfx \| = a \|\bfx \|$ for each $a \in \mathbb{N}$;

\item $\| \bfx + \bfy \| \leqslant \| \bfx \| + \|\bfy \|$.
\end{enumerate}
In this paper we mainly consider the following norms, called the \textit{sup norm} and the \textit{sum norm} respectively:
\[
\bfx \mapsto \sup \{x_i \mid i \in \mathbb{N}_+ \}; \quad \bfx \mapsto \sum_{i \in \mathbb{N}_+} x_i.
\]

\subsection{Representations and homological invariants}

Throughout this subsection let $k$ be a commutative ring. A representation of $\C$ over $k$, or a $\C$-module, is a covariant functor from $\C$ to $k \Mod$. Denote the value of $V$ on an object $\bfx$ by $V_{\bfx}$, and the \textit{support} of $V$ is set to be
\[
\supp(V) = \{\bfx \in \Ob(\C) \mid V_{\bfx} \neq 0 \}.
\]
The category of all $\C$-modules is a Grothendieck category admitting enough projectives. For every object $\bfx$ in $\C$, denote by $P(\bfx)$ the $k$-linearization of the representable functor $\C(\bfx, -)$.

Let $k\C$ be the category algebra, which is a free $k$-module with a basis consisting of morphisms in $\C$, and whose multiplication is defined by setting $f \cdot g = f \circ g$ if they can be composed, and otherwise setting $f \cdot g = 0$. It is well known that there is an embedding functor
\[
\iota: \C \Mod \to k\C \Mod, \quad V \mapsto \bigoplus_{\bfx \in \Ob(\C)} V(x)
\]
and another functor
\[
\pi: k\C \Mod \to \C \Mod, \quad M \mapsto (V: \bfx \mapsto 1_{\bfx} M)
\]
where $1_{\bfx}$ is the identity morphism on the object $\bfx$. Clearly, $\pi \circ \iota$ is isomorphic to the identity functor on $\C \Mod$, but $\iota \circ \pi$ is isomorphic to the identity functor on $k\C \Mod$ if and only if $\Ob(\C)$ is a finite set. In other words, a $\C$-module is a \textit{graded} $k\C$-module graded by $\Ob(\C)$.

The category algebra $k\C$ has a two-sided ideal $\mathfrak{I}$ which as a free $k$-module is spanned by all morphisms $f: \bfx \to \bfy$ such that $\bfx \neq \bfy$. Given a $\C$-module $V$, we define the zeroth homology group of $V$ by
\[
H_0(V) = k\C/\mathfrak{I} \otimes_{k\C} V
\]
which is a $\C$-module, and its value on $\bfx$ is isomorphic to
\[
V_{\bfx} / \sum_{\bfy < \bfx \atop f \in \C(\bfy, \bfx)} V_f(V_{\bfy}).
\]
Since $k\C/\mathfrak{I} \otimes_{k\C} -$ is right exact, for each $i \geqslant 0$, we can define the $i$-th homology group of $V$ by
\[
H_i(V) = \Tor_i^{k\C} (k\C/\mathfrak{I}, \, V).
\]
Correspondingly, the $i$-th homological degree with respect to a fixed norm $\| \bullet \|$ is defined to be
\[
\hd_i(V) = \sup \{ \| \bfx \| \mid \bfx \in \supp(H_i(V)) \}
\]
which might be infinity. We set $\hd_i(V)$ to be $-1$ when the above set is empty by convention. For brevity, we write $\gd(V)$ for $\hd_0(V)$ and $\pd(V)$ for $\max \{\hd_0(V), \, \hd_1(V) \}$, and call them the \textit{generation degree} and \textit{presentation degree}. The $\C$-module $V$ is said to be \textit{generated in finite degrees} (resp., \textit{presented in finite degrees}) if $\gd(V)$ is finite (resp., $\pd(V)$ is finite).

\begin{remark}
It is easy to see from the above definitions that if $V$ as a $\C$-module is generated by its values on objects in a subset $S \subseteq \Ob(\C)$, then $S$ contains $\supp(H_0(V))$ as a subset. It is also clear that every non-invertible morphism in $\C$ acts on $H_i(V)$ as the zero map for each $i \geqslant 0$.
\end{remark}

Now we turn to torsion theory of $\C$-modules. Given a $\C$-module $V$ and an object $\bfx \in \Ob(\C)$, an element $v \in V_{\bfx}$ is called \textit{torsion} if there is a morphism $f: \bfx \to \bfy$ such that $V_f(v) = 0 \in V_{\bfy}$. Since $\C$ satisfies the left Ore condition by Lemma \ref{facts}, it follows that all torsion elements in $V$ form a $\C$-submodule of $V$, called the \textit{torsion part} of $V$ and denoted by $V_T$. The quotient module $V/V_T$ is called the \textit{torsion free part} of $V$. We say that $V$ is \textit{torsion} if $V = V_T$, and \textit{torsion free} if $V_T = 0$. The \textit{torsion set} of $V$ along the $i$-th direction is
\[
\ts_i(V) = \{\bfx \in \Ob(\C) \mid \exists \, 0 \neq v \in V_{\bfx} \text{ such that there is a map } f \in \C(\bfx, \, \bfx + \bfo_i) \text{ satisfying } V_f(v) = 0 \},
\]
where $\bfo_i$ is the sequence of natural numbers such that the $i$-th entry is 1 and other entries are 0. Correspondingly, we define the \textit{torsion height} along the $i$-th direction of $V$ to be
\[
\tht_i(V) = \sup \{ x_i \mid \bfx \in \ts_i(V) \}.
\]
Again, if $\ts_i(V) = \emptyset$, we set $\tht_i(V) = -1$. The \textit{torsion height vector} is set to be
\[
\tht(V) = (\tht_1(V), \, \tht_2(V), \, \ldots).
\]

\begin{remark} \label{torsion height}
By definition, $\tht_i(V) = n$ if and only if for any morphism $f: \bfx \to \bfx + \bfo_i$ with $x_i > n$, the map $V_f: V_{\bfx} \to V_{\bfx + \bfo_i}$ is injective. We remind the reader that in general $\tht(V)$  might contain infinitely many nonzero entries.
\end{remark}

\section{Representations of $\mathbb{N}^{\infty}$}

In this section we let $k$ be a commutative ring and let $\mathscr{N}$ be the poset $\mathbb{N}^{\infty}$. As mentioned before, its representations are closely related to modules of the polynomial ring $A = k[X_1, \, X_2, \, \ldots]$ with countably many variables. Note that modules over $A$ coincide with representations of the quiver with one vertex and countably many loops such that every two loops commute. Since there is a natural quotient functor from $\mathscr{N}$ to this quiver (viewed as a category), every module over $A$ can be viewed as a representation of $\mathscr{N}$. Explicitly, given an $A$-module $M$, one can construct a representation $V$ of $\mathscr{N}$ by setting $V_{\bfx} = M$ for $\bfx \in \Ob(\mathscr{N})$, and the morphism $V_{\bfx} \to V_{\bfx+\bfo_i}$ for each $i \in \mathbb{N}_+$ can be defined by the action of $X_i$ on $M$. Conversely, given a $\mathscr{N}$-module $V$, one can define a multi-graded $A$-module
\[
M = \bigoplus_{\bfx \in \Ob(\N)} V_{\bfx},
\]
and the action of $X_i \in A$ on $M$ is given by maps $V_{\bfx} \to V_{\bfx + \bfo_i}$ componentwise. These two procedures are functorial, but are not inverse to each other even up to equivalence.

\subsection{Bounds for invariants with respect to the sup norm}

In this subsection we use the sup norm
\[
\| \bfx \| = \sup\{ x_i \mid i \in \mathbb{N} \}.
\]
The following simple observation plays a key role for us to establish upper bounds for homological invariants of $\N$-modules.

\begin{lemma} \label{key observation}
Let $V$ be an $\N$-module with $\gd(V) = n$, and let $\bfx$ be an object in $\N$. If $x_i \geqslant n$, then the map $V_h: V_{\bfx} \to V_{\bfx + \bfo_i}$ is surjective where $h$ is the unique morphism $\bfx \to \bfx + \bfo_i$.
\end{lemma}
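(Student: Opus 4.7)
The plan is to unwind what $\gd(V) = n$ means in terms of generation by low-norm objects, and then exploit the fact that $\N$ is a poset so that morphisms between comparable objects are unique. First I would note that $\gd(V) = n$ forces $\supp(H_0(V)) \subseteq \{\bfy \mid \|\bfy\| \leqslant n\}$, and correspondingly $V$ is generated as an $\N$-module by its values on such objects. Concretely, any element $w \in V_{\bfx + \bfo_i}$ admits a finite expression
\[
w = \sum_{j} V_{f_j}(v_j), \qquad f_j \colon \bfy_j \to \bfx + \bfo_i, \quad \|\bfy_j\| \leqslant n, \quad v_j \in V_{\bfy_j}.
\]

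Next I would leverage the hypothesis $x_i \geqslant n$ to upgrade each $\bfy_j \leqslant \bfx + \bfo_i$ to $\bfy_j \leqslant \bfx$. For every coordinate $k \neq i$ we automatically have $(\bfy_j)_k \leqslant (\bfx + \bfo_i)_k = x_k$, while in the $i$-th coordinate we estimate $(\bfy_j)_i \leqslant \|\bfy_j\| \leqslant n \leqslant x_i$. Consequently there is a (unique) morphism $g_j \colon \bfy_j \to \bfx$ in $\N$, and by uniqueness of morphisms in the poset $\N$ we must have $h \circ g_j = f_j$. Applying $V$ gives $V_{f_j}(v_j) = V_h(V_{g_j}(v_j))$, so $w = V_h\bigl(\sum_j V_{g_j}(v_j)\bigr)$ lies in the image of $V_h$.

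There is no real obstacle here beyond making sure the factorization lands in the correct place; the argument is essentially coordinatewise bookkeeping, with the crucial input being that $\N$ is a poset so factorizations are forced to be consistent. The same strategy will presumably drive the subsequent bounds on higher homological degrees in this section, since it localizes the "room" created by the hypothesis $x_i \geqslant n$ into a single direction.
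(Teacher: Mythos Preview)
Your proof is correct and follows essentially the same route as the paper's: both use that $\gd(V)=n$ forces generation by values on objects of sup norm at most $n$, then verify coordinatewise that any $\bfy \leqslant \bfx+\bfo_i$ with $\|\bfy\|\leqslant n$ actually satisfies $\bfy \leqslant \bfx$, and finally factor each $f_j$ through $h$ using uniqueness of morphisms in the poset $\N$. The paper is slightly terser (it phrases the generation statement as $\bfx+\bfo_i\notin\supp(H_0(V))$ and leaves the coordinate check implicit), but the content is identical.
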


\begin{proof}
The conclusion holds trivially if $\gd(V) = -1$ since in this case $V = 0$. Suppose that $n$ is nonnegative. Since $\| \bfx + \bfo_i \| \geqslant x_i + 1 > n$ and $\gd(V) = n$, it follows that $\bfx + \bfo_i$ is not contained in $\supp(H_0(V))$. Consequently, one has
\[
V_{\bfx + \bfo_i} = \sum_{\bfy < \bfx + \bfo_i, \, \|\bfy\| \leqslant n \atop f: \bfy \to \bfx + \bfo_i} V_f(V_{\bfy}).
\]
However, $\bfy < \bfx + \bfo_i$ and $\|\bfy\| \leqslant n$ imply that $\bfy \leqslant \bfx$. It follows that any such $f$ can be expressed as a composite $f = hg$ depicted as follows
\[
\xymatrix{
\bfy \ar[r]^-g & \bfx \ar[r]^-h & \bfx + \bfo_i.
}
\]
Thus $V_f(V_{\bfy}) = V_h (V_g (V_{\bfy})) \subseteq V_h(V_{\bfx})$, and hence $V_{\bfx + \bfo_i} = V_h(V_{\bfx})$ as claimed.
\end{proof}

As the first application of this lemma, we can use the first two homological degrees to bound torsion heights.

\begin{proposition} \label{bounding torsion height}
Let $V$ be an $\N$-module. Then for each $i \in \mathbb{N}_+$, one has
\[
\tht_i(V) \leqslant \pd(V) - 1.
\]
\end{proposition}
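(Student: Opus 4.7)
The plan is to pass to a projective presentation $P_1 \to P_0 \to V \to 0$ and reduce the injectivity statement to an elementary observation about the representable projectives $P(\bfz)$, whose supports are completely transparent on the lattice $\N$.

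Set $n = \pd(V) = \max\{\hd_0(V), \hd_1(V)\}$. Since $\N$ is a poset, there is a unique morphism $h \colon \bfx \to \bfx + \bfo_i$, so by Remark \ref{torsion height} it suffices to show that $V_h$ is injective whenever $x_i \geqslant n$. First I would construct a presentation in which each $P_j$ is a direct sum of $P(\bfz)$'s with $\|\bfz\| \leqslant n$. For $P_0$, pick lifts of $k$-module generators of $H_0(V)_\bfz$ for each $\bfz$ with $\|\bfz\| \leqslant n$; the resulting map $P_0 \to V$ is surjective by induction on $\bfx$, using that $V_\bfx = \sum_{\bfy < \bfx} V_f(V_\bfy)$ whenever $\|\bfx\| > n$ because $\bfx \notin \supp(H_0(V))$. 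Letting $K$ be the kernel, the $\Tor$ long exact sequence
\[
0 \to H_1(V) \to H_0(K) \to H_0(P_0) \to H_0(V) \to 0
\]
has its outer terms vanishing above degree $n$, which forces $\gd(K) \leqslant n$; applying the same construction to $K$ produces $P_1$.

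The decisive observation is that whenever $\|\bfz\| \leqslant n \leqslant x_i$, the transition map $P(\bfz)_\bfx \to P(\bfz)_{\bfx + \bfo_i}$ is an isomorphism. Indeed $P(\bfz)_\bfy = k$ when $\bfz \leqslant \bfy$ and $0$ otherwise; if one had $\bfz \leqslant \bfx + \bfo_i$ with $\bfz \not\leqslant \bfx$, then $\bfz$ would agree with $\bfx$ in every coordinate except the $i$-th, where $z_i = x_i + 1 > n$, contradicting $\|\bfz\| \leqslant n$. Hence both $(P_0)_h$ and $(P_1)_h$ are isomorphisms, and evaluating the presentation at $\bfx$ and at $\bfx + \bfo_i$ produces two right-exact rows joined by these vertical isomorphisms; a four-lemma on cokernels then shows that $V_h$ is itself an isomorphism, in particular injective, giving $\tht_i(V) \leqslant n - 1$. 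The only real technical point is the syzygy bookkeeping—extracting $P_1$ with $\gd(P_1) \leqslant \hd_1(V)$ from the $\Tor$ definition of $\hd_1$—whereas the remainder is a purely combinatorial statement about supports of representable projectives on $\mathbb{N}^\infty$, independent of any structure of the coefficient ring $k$.
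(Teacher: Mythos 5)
Your proof is correct, and it takes a genuinely different route from the paper's. The paper argues by contradiction: it takes a short exact sequence $0 \to W \to P \to V \to 0$ with $P$ torsion-free projective and $\gd(P) = \gd(V)$, uses the snake lemma together with the injectivity of $P_h$ to deduce that if $V_h$ were not injective then $W_h$ could not be surjective, and then invokes Lemma~\ref{key observation} (the observation that $\gd(W) \leqslant m$ and $x_i \geqslant m$ force $W_h$ surjective) to get $\gd(W) > n$, contradicting $\gd(W) \leqslant \pd(V) = n$. You instead give a direct argument: you construct a presentation $P_1 \to P_0 \to V \to 0$ in which every representable summand $P(\bfz)$ satisfies $\|\bfz\| \leqslant n$, observe that each transition map $P(\bfz)_\bfx \to P(\bfz)_{\bfx + \bfo_i}$ is then an isomorphism whenever $x_i \geqslant n$ (this is the natural ``iso'' strengthening of Lemma~\ref{key observation} on projectives, with a clean combinatorial justification), and conclude by the four lemma on cokernels that $V_h$ itself is an isomorphism. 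This avoids both the snake-lemma bookkeeping and the torsion-freeness of $P$, and it yields the slightly stronger conclusion that $V_h$ is bijective, not merely injective. One small slip: when $\bfz \leqslant \bfx + \bfo_i$ but $\bfz \not\leqslant \bfx$, one only gets $z_j \leqslant x_j$ for $j \neq i$, not equality; but the relevant fact $z_i = x_i + 1 > n$ still holds, so the contradiction goes through and the argument is sound.
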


\begin{proof}
Without loss of generality we can assume that $V$ is nonzero and $\pd(V)$ is a finite integer $n$. If the conclusion is false, then by the definition of $\tht_i(V)$, we can find an object $\bfx$ such that $x_i \geqslant n$ and the map $V_h: V_{\bfx} \to V_{\bfx + \bfo_i}$ is not injective.

Consider a short exact sequence $0 \to W \to P \to V \to 0$ such that $P$ is a projective $\N$-module with $\gd(P) = \gd(V)$. It induces the following commutative diagram
\[
\xymatrix{
0 \ar[r] & W_{\bfx} \ar[r] \ar[d]^-{W_h} & P_{\bfx} \ar[r] \ar[d]^-{P_h} & V_{\bfx} \ar[r] \ar[d]^-{V_h} & 0\\
0 \ar[r] & W_{\bfx + \bfo_i} \ar[r] & P_{\bfx + \bfo_i} \ar[r] & V_{\bfx + \bfo_i} \ar[r] & 0.
}
\]
By Lemma \ref{facts}, every morphism in $\N$ is a monomorphism, so $P(\bfx) = k\N(\bfx, -)$ is torsion free for every $\bfx \in \Ob(\N)$. Thus we can assume that $P$ is torsion free, and hence $P_h$ is injective. But $V_h$ is not injective, so applying the snake lemma we conclude that $W_h$ is not surjective. By Lemma \ref{key observation}, we deduce that
\[
\gd(W) > x_i \geqslant n.
\]
On the other hand, a long exact sequence argument to the sequence $0 \to W \to P \to V \to 0$ tells us that
\[
\gd(W) \leqslant \max \{\gd(P), \, \hd_1(V) \} = \pd(V) = n.
\]
The conclusion follows by contradiction.
\end{proof}

The second application of Lemma \ref{key observation} gives an extremely simple bound of homological degrees for torsion free $\N$-modules.

\begin{lemma} \label{hd of torsion free}
Let $V$ be a torsion free $\N$-module. Then for every $s \geqslant 0$, one has $\hd_s(V) \leqslant \gd(V)$.
\end{lemma}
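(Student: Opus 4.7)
The plan is to induct on $s$. The base case $s = 0$ is immediate since $\hd_0(V) = \gd(V)$ by definition. For the inductive step, I would fix a short exact sequence $0 \to W \to P \to V \to 0$ in which $P$ is projective with $\gd(P) = \gd(V) =: n$; since every representable $k\N(\bfx, -)$ is torsion free by Lemma \ref{facts}(1), $P$ is torsion free, and so is its submodule $W$.

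The heart of the argument is to show $\gd(W) \leq n$, which I would do by mimicking the diagram chase in the proof of Proposition \ref{bounding torsion height}. Fix any $\bfx$ with $x_i \geq n$ and let $h: \bfx \to \bfx + \bfo_i$ be the unique morphism. Applying Lemma \ref{key observation} separately to $V$ and to $P$ (both of which have generation degree at most $n$) yields that $V_h$ and $P_h$ are surjective; since $V$ and $P$ are torsion free, these maps are also injective and therefore bijective. The snake lemma applied to the commutative diagram
\[
\xymatrix{
0 \ar[r] & W_\bfx \ar[r] \ar[d]^-{W_h} & P_\bfx \ar[r] \ar[d]^-{P_h} & V_\bfx \ar[r] \ar[d]^-{V_h} & 0\\
0 \ar[r] & W_{\bfx + \bfo_i} \ar[r] & P_{\bfx + \bfo_i} \ar[r] & V_{\bfx + \bfo_i} \ar[r] & 0
}
\]
then forces $W_h$ to be bijective as well. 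Now if some $\bfy \in \supp(H_0(W))$ satisfied $\|\bfy\| > n$, picking $j$ with $y_j = \|\bfy\| \geq n+1$ and setting $\bfx' = \bfy - \bfo_j$ gives $x'_j \geq n$; the map $W_{\bfx'} \to W_\bfy$ induced by the unique morphism is then surjective, which contradicts $\bfy \in \supp(H_0(W))$. Hence $\gd(W) \leq n$.

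To finish the induction, the long exact sequence for $\Tor^{k\C}_*(k\C/\mathfrak{I}, -)$ yields $H_s(V) \hookrightarrow H_{s-1}(W)$ for $s \geq 1$, since $H_s(P) = 0$, so $\hd_s(V) \leq \hd_{s-1}(W)$; since $W$ is torsion free, the induction hypothesis gives $\hd_{s-1}(W) \leq \gd(W) \leq n = \gd(V)$. The only real content is checking that passing to the first syzygy preserves both torsion freeness and the generation degree. Torsion freeness is inherited automatically from $P$; the generation-degree bound is where all the work happens, and the snake lemma argument above is the main (though mild) obstacle.
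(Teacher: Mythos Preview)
Your proof is correct and follows essentially the same route as the paper's. The paper first reduces to the case $s=1$ and then runs exactly your snake-lemma diagram chase (at $\bfx-\bfo_i \to \bfx$) to show $\gd(W)\le n$, whereas you package the same argument as a direct induction on $s$; the core content---using Lemma~\ref{key observation} plus torsion freeness to force $W_h$ to be surjective---is identical.
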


\begin{proof}
Without loss of generality we can assume that $\gd(V) = n$ is a natural number. It suffices to show the conclusion for $s = 1$. Indeed, if this is true, then take a short exact sequence $0 \to W \to P \to V \to 0$ such that $P$ is a torsion free projective $\N$-module with $\gd(P) = \gd(V)$. Applying the long exact sequence argument we deduce that
\[
\gd(W) \leqslant \max\{\gd(P), \, \hd_1(V) \} = \gd(V).
\]
Since $W$ is also torsion free, replacing $V$ by $W$ we conclude that
\[
\hd_2(V) = \hd_1(W) \leqslant \gd(W) \leqslant \gd(V).
\]
Then conclusion then follows by a recursive argument.

If the conclusion is false for $s = 1$, then by the previous argument, we have $\gd(W) \geqslant n+1$, so there exists an object $\bfx \in \supp(H_0(W))$ such that $x_i \geqslant n+1$ for a certain $i \in \mathbb{N}_+$. We consider the following commutative diagram
\[
\xymatrix{
0 \ar[r] & W_{\bfx - \bfo_i} \ar[r] \ar[d]^-{W_h} & P_{\bfx - \bfo_i} \ar[r] \ar[d]^-{P_h} & V_{\bfx - \bfo_i} \ar[r] \ar[d]^-{V_h} & 0\\
0 \ar[r] & W_{\bfx} \ar[r] & P_{\bfx} \ar[r] & V_{\bfx} \ar[r] & 0.
}
\]
Note that all vertical maps are injective since $V$ is torsion free. Furthermore, since $(\bfx - \bfo_i)_i = x_i - 1 \geqslant n$, Lemma \ref{key observation} tells us that $P_h$ and $V_h$ are surjective. Consequently, $W_h$ is surjective as well, so $(H_0(W))_{\bfx} = 0$ and hence $\bfx \notin \supp(H_0(W))$. The conclusion follows by contradiction.
\end{proof}

An application of the above result in commutative algebra is as follows.

\begin{corollary}
Let $A = k[X_1, \, X_2, \, \ldots]$ be the polynomial ring with countably many variables, and let $I \lhd A$ be an ideal generated by monomials such that the degree of each $X_i$ in these monomials is bounded by $n$. Then $I$ has a projective resolution $P^{\bullet} \to I \to 0$ such that each $P^j$ is a direct sum of principal ideals of $A$ generated by a monomial in which the degree of each $X_i$ is at most $n$.
\end{corollary}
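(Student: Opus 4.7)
The plan is to translate the commutative-algebra statement into a statement about $\N$-modules, apply Lemma \ref{hd of torsion free}, and translate back. The multigrading of $A$ by $\Ob(\N) = \mathbb{N}^{\infty}$ exhibits $A$ as the $\N$-module $\bfx \mapsto k \cdot X^{\bfx}$ with structure maps given by multiplication by $X_i$; since $A$ is a domain, these maps are injective, so $A$ is torsion free. A monomial ideal $I$ is a multigraded submodule, hence corresponds to a torsion free $\N$-submodule $V \subseteq A$ with $V_{\bfx} = k \cdot X^{\bfx}$ when $X^{\bfx} \in I$ and $0$ otherwise. The assumption on the generators of $I$ translates exactly to $\gd(V) \leqslant n$ in the sup norm, since $V$ is generated by its values on those $\bfx$ for which $X^{\bfx}$ lies among the prescribed monomial generators, and each such $\bfx$ has $\|\bfx\| \leqslant n$.

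Next I would invoke Lemma \ref{hd of torsion free} to conclude $\hd_s(V) \leqslant n$ for every $s \geqslant 0$. I then build a projective resolution $P^{\bullet} \to V \to 0$ in $\N \Mod$ step by step as follows: set $P^0 = \bigoplus_{\bfx} P(\bfx)^{\oplus c_{\bfx}}$ with one summand $P(\bfx) = k\N(\bfx, -)$ for each element of a minimal set of generators of $V$, so the surjection $P^0 \twoheadrightarrow V$ uses only indices $\bfx$ with $\|\bfx\| \leqslant n$. Let $W^1 = \ker(P^0 \to V)$. Since each $P(\bfx)$ is torsion free (because every morphism in $\N$ is a monomorphism, Lemma \ref{facts}(1)), $P^0$ is torsion free, and the subobject $W^1$ is torsion free as well. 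Moreover the long exact sequence argument already used in the proof of Lemma \ref{hd of torsion free} gives $\gd(W^1) \leqslant \max\{\gd(P^0), \hd_1(V)\} \leqslant n$. Iterating, each kernel $W^{j+1} = \ker(P^j \to W^j)$ is torsion free with $\gd(W^{j+1}) \leqslant n$, so every $P^j$ can be taken as a direct sum of representables $P(\bfx)$ with $\|\bfx\| \leqslant n$.

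Finally I translate back: under the correspondence between $\N$-modules and multigraded $A$-modules described in Section 3, the representable $P(\bfx) = k\N(\bfx, -)$ has $P(\bfx)_{\bfy} = k$ precisely when $\bfy \geqslant \bfx$ and $0$ otherwise, with structure maps given by multiplication by the appropriate $X_i$'s. This is exactly the principal ideal $A \cdot X^{\bfx}$, generated by a monomial whose $X_i$-degree is $x_i \leqslant \|\bfx\| \leqslant n$ for every $i$. Thus the resolution $P^{\bullet} \to V \to 0$ produces the required resolution of $I$ by direct sums of such principal ideals.

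The step I expect to require the most care is verifying that the iteratively constructed kernels $W^j$ remain torsion free so that Lemma \ref{hd of torsion free} applies at each stage; this is where Lemma \ref{facts}(1) (every morphism in $\N$ is a monomorphism, making each $P(\bfx)$ torsion free) is essential. Everything else is bookkeeping, once one fixes the dictionary between representables $P(\bfx)$ and principal monomial ideals $A \cdot X^{\bfx}$.
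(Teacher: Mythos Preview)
Your proposal is correct and follows essentially the same approach as the paper: identify $I$ with a torsion free $\N$-submodule $V$ of $k\N(\boldsymbol{0},-)$, invoke Lemma~\ref{hd of torsion free} to bound all homological degrees by $n$, build the resolution from representables $P(\bfx)$ with $\|\bfx\|\leqslant n$, and translate back via the identification of $P(\bfx)$ with the principal ideal $A\cdot X^{\bfx}$. Your write-up merely unpacks in more detail what the paper compresses into a couple of sentences, including the verification that the successive syzygies stay torsion free.
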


\begin{proof}
We can identify $I$ as a submodule $V$ of the $\N$-module $k\N(\boldsymbol{0}, \, -)$ in an obviously way. By the above lemma, one can construct a projective resolution $Q^{\bullet} \to V \to 0$ of $\N$-modules such that $Q^i$ is isomorphic to a direct sum of $k\N(\bfx, \, -)$ such that $\| \bfx \| \leqslant n$ for each $i \in \mathbb{N}$. Note that each $k\N(\bfx, \, -)$ can be identified with the principal ideal of $A$ generated by $X_1^{x_1} X_2^{x_2} \ldots$ which is a monomial since $x_i = 0$ for $i \gg 0$. The conclusion follows from these observations.
\end{proof}

We generalize the conclusion of the above lemma to the general situation. Recall that an $\N$-module $V$ is presented in finite degrees if $\pd(V)$ is finite.

\begin{proposition} \label{bound hds}
Let $V$ be an $\N$-module. Then $\hd_s(V) \leqslant \pd(V)$ for $s \geqslant 0$. In particular, the category of $\N$-modules presented in finite degrees is abelian.
\end{proposition}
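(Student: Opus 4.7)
The cases $s = 0, 1$ are immediate from the definition of $\pd$, so I would argue by induction on $s \geqslant 2$. Fix a surjection $\pi \colon P \twoheadrightarrow V$ where $P = \bigoplus_{\alpha} k\N(\bfx_{\alpha}, -)$ is a direct sum of representables indexed by a generating set of $V$ sitting in degrees at most $\gd(V)$, so that $\gd(P) = \gd(V) \leqslant \pd(V)$. Since every morphism in $\N$ is a monomorphism by Lemma \ref{facts}(1), each representable $k\N(\bfx_{\alpha}, -)$ is torsion free, hence so is $P$, and consequently $W := \ker \pi$ is torsion free as a submodule of a torsion-free module.

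Applying $\Tor_{\ast}^{k\N}(k\N/\mathfrak{I}, -)$ to the short exact sequence $0 \to W \to P \to V \to 0$ and using the projectivity of $P$, I obtain $H_s(V) \cong H_{s-1}(W)$ for all $s \geqslant 2$ together with the bound $\gd(W) \leqslant \max\{\gd(P), \hd_1(V)\} \leqslant \pd(V)$. Since $W$ is torsion free, Lemma \ref{hd of torsion free} yields $\hd_{s-1}(W) \leqslant \gd(W) \leqslant \pd(V)$, and hence $\hd_s(V) \leqslant \pd(V)$ for every $s \geqslant 2$. The main obstacle here is really the torsion-freeness of $W$; once that observation is in hand, Lemma \ref{hd of torsion free} delivers the estimate essentially for free, and no further bookkeeping on torsion heights or on $\tht$ is needed.

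For the second assertion I would verify that the subcategory is closed under kernels, cokernels, and extensions, with extensions handled directly by the long exact sequence in homology. Given a morphism $f \colon V \to W$ of $\N$-modules presented in finite degrees, the standard long exact sequences immediately give $\gd(\coker f) \leqslant \gd(W)$ and $\hd_1(\coker f) \leqslant \max\{\hd_1(W), \gd(V)\}$, so $C := \coker f$ is presented in finite degrees. For $K := \ker f$ I would factor $f$ through its image: the short exact sequence $0 \to \operatorname{im} f \to W \to C \to 0$ yields $\hd_1(\operatorname{im} f) \leqslant \max\{\hd_1(W), \hd_2(C)\}$, where the first half of the proposition applied to $C$ bounds $\hd_2(C) \leqslant \pd(C) < \infty$. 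Feeding this estimate into the long exact sequence associated with $0 \to K \to V \to \operatorname{im} f \to 0$ bounds $\gd(K)$, and a further appeal to the first assertion (this time applied to $\operatorname{im} f$) bounds $\hd_1(K)$. Thus the subcategory is abelian, and as anticipated the uniform bound proved in the first half is exactly what makes the closure arguments terminate.
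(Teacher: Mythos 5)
Your proof matches the paper's approach essentially exactly: for the first half you use the same torsion-free projective cover $P$ with $\gd(P)=\gd(V)$, observe that the syzygy $W$ is torsion free as a submodule of $P$, and invoke Lemma \ref{hd of torsion free}; for the second half the paper also argues via the long exact sequence in homology, only stating the ``two-out-of-three'' property for short exact sequences rather than spelling out the kernel and cokernel bookkeeping as you do. The argument is correct and there is no substantive difference from the paper.
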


\begin{proof}
The inequality holds trivially for $s \leqslant 1$. Take a short exact sequence $0 \to W \to P \to V \to 0$ such that $P$ is a torsion free projective $\N$-module and $\gd(P) = \gd(V)$. For $s \geqslant 2$, by Lemma \ref{hd of torsion free} one has
\[
\hd_s(V) = \hd_{s-1}(W) \leqslant \gd(W) \leqslant \max\{\gd(V), \, \hd_1(V) \} = \pd(V),
\]
so the first statement is valid.

Let $0 \to U \to V \to W \to 0$ be a short exact sequence of $\N$-modules. By the induced long exact sequence of homology groups, one has
\[
\hd_s(V) \leqslant \max \{\hd_s(U), \, \hd_s(W) \}
\]
for $s \geqslant 0$. Consequently, if both $U$ and $W$ are presented in finite degrees, then their homological degrees are all finite. By the above inequality, homological degrees of $V$ are also finite, and in particular $V$ is presented in finite degrees. Using the same argument, one can show that if any two of the three terms in this sequence are presented in finite degrees, so is the third one. The second statement follows from this observation.
\end{proof}

\subsection{Bounds for invariants with respect to the sum norm}

It might seem to the reader that some results established in previous subsection with respect to the sup norm are too strong to be interesting, for instance Proposition \ref{bound hds}. The reason is that the sup norm is too weak, and hence too many distinct objects share the same norm. For example, objects in the following set
\[
\{ \bfo_1 + \bfo_2 + \ldots + \bfo_s \mid s \in \mathbb{N}_+ \}
\]
all have norm 1, which seems quite unnatural. Indeed, this norm does not respect the poset structure of $\N$; that is, one does not have $\| \bfx \| < \| \bfy \|$ when $\bfx < \bfy$.

In this subsection we consider the more reasonable sum norm. Throughout this subsection all homological degrees are defined with respect to the sum norm, but we use the same notation. Hopefully this would not cause too much confusion to the reader. Unfortunately, the situation becomes much more complicated. For instance, in contrast to Proposition \ref{bound hds}, the category of $\N$-modules presented in finite degrees with respect to the sum norm is no longer abelian, as illustrated by the following example.

\begin{example} \label{counterexample}
Let $k$ be a field and $P$ be the direct sum of countably many copies of $P(\boldsymbol{0})$. Choose a basis $\{b_1, \, b_2, \, \ldots \}$ for $P_{\boldsymbol{0}}$. For each $\bfx \in \Ob(\N)$, $P_{\bfx}$ has a basis consisting of elements $P_f(b_i)$, $i \geqslant 0$, where $f$ is the unique morphism from $\boldsymbol{0}$ to $\bfx$. By abuse of notation we denote $V_f(b_i)$ by $b_i$ again. Define a submodule $V$ of $P$ as follows:
\begin{itemize}
\item $V_{\boldsymbol{0}} = 0$ and
\[
V_{\bfo_s} =
\begin{cases}
\langle b_{(i+1)/2} \rangle, & \text{if $s = 2i - 1$ is odd};\\
\langle b_{p_i} + b_{{p_i}^2}  + \ldots + b_{p_i^{i+1}} \rangle; & \text{if $s = 2i$ is even}
\end{cases}
\]
where $p_i$ is the $i$-th prime number;

\item $V$ is generated in degree 1.
\end{itemize}
For instance, we have $V_{\bfo_1} = \langle b_1 \rangle$, $V_{\bfo_2} = \langle b_2 + b_4 \rangle$, $V_{\bfo_3} = \langle b_2 \rangle$, and $V_{\bfo_4} = \langle b_3 + b_9 + b_{27} \rangle$.

Clearly, both $P$ and $P/V$ are presented in finite degrees. However, this is not the case for the submodule $V$ of $P$. Indeed, we can construct a short exact sequence
\[
0 \to K \to \bigoplus_{s \in \mathbb{N}_+} P(\bfo_s) \to V \to 0.
\]
A direct computation shows that for any $s \in \mathbb{N}_+$, we can find an object $\bfx$ with sum norm $p_s + 1$ such that $\bfx \in \supp(H_0(K))$. For examples:
\begin{itemize}
\item $s = 1$, $p_1 = 2$, $\bfx = (0, \, 1, \, 1, \, 0, \, 0, \, 0, \, 1, \, 0, \, \ldots)$, $\dim_k (H_0(K))_{\bfx} = 1$;
\item $s = 2$, $p_2 = 3$, $\bfx = \bfo_4 + \bfo_5 + \bfo_{17} + \bfo_{53}$, $\dim_k (H_0(K))_{\bfx} = 1$.
\end{itemize}
Thus $\gd(K) = \infty$, and hence $V$ is not presented in finite degrees.
\end{example}

This example tells us that it is impossible to bound higher homological degrees (even the second homological degree) of $\N$-modules $V$ presented in finite degrees in terms of its presentation degree. However, if $k$ is a field and $V$ is moreover finitely generated\footnote{The condition that $V$ is finitely generated and presented in finite degrees does not imply that $V$ is finitely presented.}, then its second homological degree is finite. In the rest of this section we give a proof for this fact.

\begin{lemma}  \label{submodules of free module}
Let $k$ be a field and let $V$ be a submodule of $P(\bfx)$ for a certain $\bfx \in \Ob(\N)$. Then
\[
\hd_1(V) \leqslant 2 \gd(V).
\]
In particular, if $V$ is finitely generated, then it is finitely presented.
\end{lemma}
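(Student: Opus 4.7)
The plan is to exploit how simple the structure of $P(\bfx) = k\N(\bfx,-)$ is: every stalk is $0$ or a copy of $k$, and every nonzero transition map is the identity on $k$. Consequently any submodule $V \subseteq P(\bfx)$ is completely determined by its support $S := \supp(V)$, which must be an up-set inside $\{\bfy : \bfy \geq \bfx\}$; its generators are indexed by the set $\min(S)$ of minimal elements of $S$, and $\gd(V) = \sup\{\|\bfy\| : \bfy \in \min(S)\}$ with respect to the sum norm. My first step is to write down the tautological surjection
\[
P_0 := \bigoplus_{\bfy \in \min(S)} P(\bfy) \twoheadrightarrow V,
\]
call the kernel $K$, and use $\hd_1(V) = \gd(K)$.

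The crux is pinning down $\supp(H_0(K))$. For $\bfz \in \N$ set $M(\bfz) := \{\bfy \in \min(S) : \bfy \leq \bfz\}$; since all transition maps of $P_0$ restrict to identities on each rank-one summand, $K_{\bfz}$ coincides with the kernel of the summation map $\bigoplus_{\bfy \in M(\bfz)} k \to k$, spanned by the differences $e_{\bfy} - e_{\bfy'}$. A particular difference lies in the image of $K_{\bfz'} \to K_{\bfz}$ for some $\bfz' < \bfz$ precisely when the coordinate-wise join $\bfy \vee \bfy'$ satisfies $\bfy \vee \bfy' < \bfz$. Organising this combinatorially, form the graph $\Gamma_{\bfz}$ on vertex set $M(\bfz)$ whose edges are such liftable pairs; the standard edge-rank identity in linear algebra then gives
\[
\dim_k (H_0(K))_{\bfz} = c(\Gamma_{\bfz}) - 1,
\]
where $c(\Gamma_{\bfz})$ denotes the number of connected components. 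In particular $\bfz \in \supp(H_0(K))$ forces $\Gamma_{\bfz}$ disconnected, which in turn forces some pair $\bfy, \bfy' \in \min(S)$ to satisfy $\bfy \vee \bfy' = \bfz$.

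Subadditivity of the sum norm under coordinate-wise maxima then finishes the bound:
\[
\|\bfz\| = \sum_i \max(y_i, y'_i) \leq \sum_i (y_i + y'_i) = \|\bfy\| + \|\bfy'\| \leq 2 \gd(V).
\]
For the finitely-presented addendum, if $V$ is finitely generated then $\min(S)$ is finite, whence only finitely many joins occur, each stalk $K_{\bfz}$ has dimension at most $|\min(S)| - 1$, and $H_0(K)$ is a finite-dimensional $k$-vector space; a finitely generated projective then surjects onto $K$, yielding a finite presentation of $V$. The main obstacle I anticipate is the graph-theoretic identification of $\supp(H_0(K))$; once that description is set up, the factor of $2$ drops out immediately from $\max(y_i, y'_i) \leq y_i + y'_i$, precisely the subadditivity that fails to bound higher syzygies of arbitrary $\N$-modules, as Example~\ref{counterexample} illustrates.
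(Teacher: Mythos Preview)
Your proposal is correct and follows essentially the same route as the paper: both take the tautological surjection $\bigoplus_{\bfy \in \supp(H_0(V))} P(\bfy) \twoheadrightarrow V$, identify $K_{\bfz}$ with the kernel of the summation map $k^{M(\bfz)} \to k$, and observe that every relation descends to one supported on a join $\bfy \vee \bfy'$ of two generators, whence the sum-norm subadditivity $\|\bfy \vee \bfy'\| \leqslant \|\bfy\| + \|\bfy'\|$ yields the factor~$2$. Your graph-theoretic packaging (the formula $\dim_k (H_0(K))_{\bfz} = c(\Gamma_{\bfz}) - 1$) is a pleasant refinement of the paper's argument, which only shows that $K_{\bfz}$ is spanned by images from the various $K_{\bfy \vee \bfy'}$ without computing the dimension of the quotient; but the underlying mechanism is identical.
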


\begin{proof}
Without loss of generality we assume that $V$ is nonzero, $\gd(V) = n$ is finite, and $\bfx = \boldsymbol{0}$ since $P(\bfx)$ can be identified with a submodule of $P(\boldsymbol{0})$. Let $S = \supp (H_0(V))$, which is nonempty, and let $V(\bfx)$ be the submodule of $V$ generated by $V_{\bfx}$. Then
\[
V = \sum_{\bfx \in S} V(\bfx).
\]
Since $V(\bfx)$ is torsion free, it is actually isomorphic to $P(\bfx)$. Thus we obtain a surjection
\[
Q = \bigoplus_{\bfx \in S} V(\bfx) \longrightarrow \sum_{\bfx \in S} V(\bfx) = V,
\]
and hence a short exact sequence $0 \to K \to Q \to V \to 0$, from which we deduce that $\hd_1(V) \leqslant \gd(K)$.

Now we investigate $K$. Fix $\bfz \in \Ob(\N)$ and let
\[
T = S \cap \{ \bfx \in \Ob(\N) \mid \bfx \leqslant \bfz \}
\]
which is a finite set. We have
\[
K_{\bfz} = \{(V_{\bfx \to \bfz} (v_{\bfx}))_{\bfx \in T} \mid v_{\bfx} \in V_{\bfx}, \, \sum_{\bfx \in T} V_{\bfx \to \bfz} (v_{\bfx}) = 0 \},
\]
where $\bfx \to \bfz$ is the morphism from $\bfx$ to $\bfz$. Since $V$ is a submodule of $P = P(\boldsymbol{0})$, after choosing a nonzero element $v \in P_{\boldsymbol{0}}$, we have $v_{\bfx} = \lambda_{\bfx} P_{\boldsymbol{0} \to \bfx}(v)$ for a certain $\lambda_{\bfx} \in k$ and hence $V_{\bfx \to \bfz}(v_{\bfx}) = \lambda_{\bfx} P_{\boldsymbol{0} \to \bfz} (v)$. Consequently,
\[
K_{\bfz} = \{(\lambda_{\bfx} P_{\boldsymbol{0} \to \bfz} (v))_{\bfx \in T} \mid \sum_{\bfx \in T} \lambda_{\bfx} = 0 \}.
\]
Clearly, this space is spanned by elements of the following form:
\[
u = (0, \, \ldots, \, 0, \, P_{\boldsymbol{0} \to \bfz} (v) = V_{\bfx \to \bfz} (\frac{v_{\bfx}}{\lambda_{\bfx}}), \, 0, \, \ldots, \, 0, \, -P_{\boldsymbol{0} \to \bfz} (v) = - V_{\bfy \to \bfz} (\frac{v_{\bfy}}{\lambda_{\bfy}}), \, 0, \ldots, \, 0)
\]
where $\bfx$ and $\bfy$ are two distinct elements in $T$ and $\lambda_{\bfx} \neq 0 \neq \lambda_{\bfy}$. However, by the commutative diagram
\[
\xymatrix{
\boldsymbol{0} \ar[r] \ar[d] & \bfy \ar[ddr] \ar[d]\\
\bfx \ar[r] \ar[drr] & \bfx \vee \bfy \ar[dr]\\
 & & \bfz
}
\]
We know that
\[
w = (0, \, \ldots, \, 0, \, V_{\bfx \to \bfx \vee \bfy} (\frac{v_{\bfx}}{\lambda_{\bfx}}), \, 0, \, \ldots, \, 0, \, -V_{\bfy \to \bfx \vee \bfy} (\frac{v_{\bfy}}{\lambda_{\bfy}}), \, 0, \ldots, \, 0) \in K_{\bfx \vee \bfy},
\]
where $\bfx \vee \bfy$ is the least upper bound of $\bfx$ and $\bfy$. Furthermore, $K_{\bfx \vee \bfy \to \bfz}$ sends $w$ to $u$. Consequently, $K_{\bfz}$ can be generated by the values of $K$ on objects $\bfx \vee \bfy$ with $\bfx, \, \bfy \in T$. Since $\| \bfx \vee \bfy \| \leqslant \|\bfx \| + \|\bfy\| \leqslant 2n$, the desired inequality follows.

If $V$ is finitely generated (or equivalently, $V$ is generated by its values on finitely many objects and the values on these objects are finite dimensional), then $S$ is a finite set, and for each $\bfx \in S$, the dimension of $V_{\bfx}$ is finite. Consequently, the dimensions of $Q_{\bfx}$ and $K_{\bfx}$ for every $\bfx \in \Ob(\N)$ are finite as well. Furthermore, there are only finitely many $\bfx \vee \bfy$ with $\bfx, \bfy \in S$. Since $K$ can be generated by its values on these objects, we deduce that $K$ is finitely generated, and hence $V$ is finitely presented.
\end{proof}

\begin{remark}
Many key ingredients in above proof heavily rely on the extra condition that $k$ is a field. For instances, $V(\bfx) \cong P(\bfx)$ in the first paragraph of the proof does not hold for an arbitrary commutative ring. Similarly, in general we cannot find a basis for $K_{\bfz}$ in terms of $u$ in the second paragraph since $\lambda_{\bfx}$ might not be invertible in $k$, and since $K_{\bfz}$ might not be a free $k$-module.
\end{remark}

\begin{remark} \label{left coherence of kN}
The second statement of this lemma means that the category algebra $k\N$ is \textit{locally left coherent}. Consequently, finitely presented $\N$-modules coincide with coherent modules, and these modules form an abelian subcategory of $\N \Mod$. This result can be deduced from the well known fact that the polynomial ring $A = k[X_1, \, X_2, \, \ldots]$ is a coherent ring and $\N \Mod$ can be viewed as a full subcategory of $A \Mod$. However, the inequality in this lemma holds in a more general setup; that is, we do not need to impose the condition that $V$ is generated by its values on finitely many objects, or the even stronger condition that $V$ is finitely generated.
\end{remark}

\begin{proposition} \label{coherence}
Let $k$ be a field, $V$ an $\N$-module and $\omega = \dim_k(H_0(V))$. Then
\[
\hd_2(V) \leqslant 2^{\omega} \hd_1(V).
\]
\end{proposition}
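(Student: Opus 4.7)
The plan is to use induction on $\omega$, assumed finite (otherwise the bound is vacuous or trivially $\infty$). I first reduce to a statement about a kernel: pick $\bfx_1, \ldots, \bfx_\omega$ and a surjection $P := \bigoplus_{i=1}^{\omega} P(\bfx_i) \twoheadrightarrow V$ inducing an isomorphism on $H_0$, and write $K$ for its kernel. Since each $P(\bfx_i)$ is projective, the long exact sequence of homology associated to $0 \to K \to P \to V \to 0$ yields $\hd_2(V) = \hd_1(K)$ and (by minimality of the cover) $\gd(K) \leqslant \hd_1(V)$. Hence it suffices to prove the stronger claim: whenever $K$ is a submodule of $\bigoplus_{i=1}^{\omega} P(\bfx_i)$ for some objects $\bfx_1, \ldots, \bfx_\omega$, one has $\hd_1(K) \leqslant 2^{\omega} \gd(K)$.

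The base case $\omega = 1$ is exactly Lemma \ref{submodules of free module}. For the inductive step, split $P = P'' \oplus P'''$ with $P'' = \bigoplus_{i=1}^{\omega-1} P(\bfx_i)$ and $P''' = P(\bfx_\omega)$; let $K'''$ be the image of $K$ under the projection $P \to P'''$ and let $K'' = K \cap P''$, producing the exact sequence $0 \to K'' \to K \to K''' \to 0$. Applying the base case to $K''' \subseteq P'''$ yields $\hd_1(K''') \leqslant 2\gd(K''') \leqslant 2\gd(K)$, where $\gd(K''') \leqslant \gd(K)$ because $K'''$ is a quotient of $K$. The long exact sequence of homology then provides $\gd(K'') \leqslant \max\{\gd(K), \hd_1(K''')\} \leqslant 2\gd(K)$, and the inductive hypothesis applied to $K'' \subseteq P''$ gives $\hd_1(K'') \leqslant 2^{\omega-1}\gd(K'') \leqslant 2^{\omega} \gd(K)$. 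Finally, the same long exact sequence yields $\hd_1(K) \leqslant \max\{\hd_1(K''), \hd_1(K''')\} \leqslant 2^{\omega}\gd(K)$, closing the induction.

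The main difficulty is choosing the right induction statement. A direct induction on the bound $\hd_2(V) \leqslant 2^\omega \hd_1(V)$ fails to close, because the auxiliary quotient $V'' := P''/K''$ used in the splitting can have $\hd_1(V'') = \gd(K'')$ strictly larger than $\hd_1(V)$, in fact doubling under one application of Lemma \ref{submodules of free module}. Phrasing the induction as a bound on $\hd_1(K)$ in terms of $\gd(K)$ for submodules $K$ of finite direct sums of principal projectives absorbs this doubling transparently, so that the factor of $2$ produced at each step by Lemma \ref{submodules of free module} compounds cleanly into the exponential factor $2^{\omega}$.
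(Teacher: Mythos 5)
Your proof is correct, and it is essentially the same induction as the paper's: induct on $\omega$, peel a single summand $P(\bfx)$ off $P$, handle that piece with Lemma \ref{submodules of free module} (the $\omega=1$ case), handle the complementary piece with the inductive hypothesis, and assemble the two via the long exact sequence. The one real difference is which half of the splitting gets the intersection and which gets the image. You set $K''' = \mathrm{im}(K\to P(\bfx_\omega))$ and $K''=K\cap P''$, applying the base case to $K'''$ and the IH to $K''$; the paper instead sets $W'=W\cap P'$ and $W''=\mathrm{im}(W\to P'')$, applying the base case to $W'$ and the IH to $V''=P''/W''$. Because $W''$ is a \emph{quotient} of $W$, the paper gets $\gd(W'')\leqslant\gd(W)=\hd_1(V)$ for free, so $\hd_1(V'')\leqslant\hd_1(V)$ and the ``direct'' induction on the statement $\hd_2(V)\leqslant 2^\omega\hd_1(V)$ closes without any reformulation; the growth in norm is instead absorbed by $\gd(W')$, which gets bounded via $\hd_1(W'')$ from the long exact sequence. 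So your parenthetical claim in the last paragraph that a direct induction on the bound must fail is an artifact of \emph{your} choice of splitting, not an intrinsic obstruction. That said, your reformulation — phrasing the induction as $\hd_1(K)\leqslant 2^{\omega}\gd(K)$ for submodules $K$ of sums of $\omega$ principal projectives, and working entirely on the kernel side — is clean and if anything slightly more transparent than the three-by-three diagram chase in the paper.
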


\begin{proof}
Without loss of generality one can assume that both $\omega$ and $\hd_1(V)$ are finite. One can find a short exact sequence $0 \to W \to P \to V \to 0$ such that $P$ is a coproduct of $\omega$ copies of $\N$-modules of the form $P(\bfx)$. Furthermore, one can assume that $H_0(P) \cong H_0(V)$, and hence $\gd(W) = \hd_1(V)$.

When $\omega = 1$, by the previous lemma, one has
\[
\hd_2(V) = \hd_1(W) \leqslant 2 \gd(W) = 2\hd_1(V),
\]
so the conclusion holds. Now assume that $\omega \geqslant 2$.

By taking a copy $P' = P(\bfx)$ inside $P$ and letting $W' = P' \cap W$, we get a commutative diagram of short exact sequences
\[
\xymatrix{
0 \ar[r] & W' \ar[r] \ar[d] & P' \ar[r] \ar[d] & V' \ar[r] \ar[d] & 0\\
0 \ar[r] & W \ar[r] \ar[d] & P \ar[r] \ar[d] & V \ar[r] \ar[d] & 0\\
0 \ar[r] & W'' \ar[r] & P'' \ar[r] & V'' \ar[r] & 0.
}
\]
Since $\dim_k(H_0(V'')) \leqslant \dim_k(H_0(P'')) = \omega -1$ and $\hd_1(V'') \leqslant \gd(W'') \leqslant \gd(W) = \hd_1(V)$, the induction hypothesis tells us that
\[
\hd_2(V'') \leqslant 2^{\omega - 1} \hd_1(V'') \leqslant 2^{\omega - 1} \gd(W'') \leqslant 2^{\omega - 1} \hd_1(V).
\]
By looking at the first column, we deduce that
\[
\hd_1(V') \leqslant \gd(W') \leqslant \max\{\gd(W), \, \hd_1(W'') \} \leqslant \max\{\gd(W), \, 2^{\omega - 1} \hd_1(V) \} = 2^{\omega - 1} \hd_1(V).
\]
Again, by the established conclusion for $\omega = 1$, we have $\hd_2(V') \leqslant 2 \hd_1(V') \leqslant 2^{\omega} \hd_1(V)$. By looking at the last column, we get the desired upper bound.
\end{proof}

\subsection{Shift functors}

In this subsection we introduce shift functors and describe some elementary properties. Throughout this subsection let $k$ be a commutative ring and all numerical invariants are defined with respect to the sup norm.

Note that for each positive integer $i$ there is a functor $\iota_i: \N \to \N$ sending $\bfx \in \Ob(\N)$ to $\bfx + \bfo_i$, and the unique morphism $\bfx \to \bfy$ to the unique morphism $\bfx+\bfo_i \to \bfy+\bfo_i$. The shift functor $\Sigma_i: \N \Mod \to \N \Mod$ along the $i$-th direction is the pullback of $\iota_i$, mapping an $\N$-module $V$ to $V \circ \iota_i$. The following lemma collects some elementary properties of $\Sigma_i$.

\begin{lemma} \label{properties of shift functor}
Let $\Sigma_i$ be the $i$-th shift functor. Then:
\begin{enumerate}
\item $\Sigma_i$ is exact;

\item $\Sigma_i \circ \Sigma_j = \Sigma_j \circ \Sigma_i$;

\item there is a natural transformation $\phi^i$ between the identity functor on $\N \Mod$ and $\Sigma_i$;

\item the kernel $K_iV$ of the map $V \to \Sigma_iV$ has the following description
\[
K_iV = \bigoplus_{\bfx \in \Ob(\N)} \{v \in V_{\bfx} \mid v \text{ is sent to 0 by the map } \bfx \to \bfx + \bfo_i \}.
\]

\item the natural maps $V \to \Sigma_iV$ is injective for each $i \in \mathbb{N}$ if and only if $V$ is torsion free;

\item for $P(\bfx) = k\N(\bfx, -)$, one has
\[
\Sigma_i P(\bfx) \cong
\begin{cases}
P(\bfx - \bfo_i), & \text{if } x_i \geqslant 1;\\
P(\bfx), & \text{else.}
\end{cases}
\]
\end{enumerate}
\end{lemma}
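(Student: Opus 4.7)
The proof is organized by the six claims, all of which flow from the pointwise definition $(\Sigma_iV)_\bfx = V_{\bfx+\bfo_i}$ together with basic facts about the poset $\N$.

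For (1), I would observe that $\Sigma_i$ is defined pointwise by precomposition with $\iota_i$, so a sequence $0 \to U \to V \to W \to 0$ of $\N$-modules is exact iff it is pointwise exact, and $(\Sigma_i U)_\bfx \to (\Sigma_i V)_\bfx \to (\Sigma_i W)_\bfx$ is literally the pointwise sequence evaluated at $\bfx + \bfo_i$. For (2), both composites send $V$ to $V \circ \iota_i \circ \iota_j$, and $\iota_i \circ \iota_j = \iota_j \circ \iota_i$ since $\bfo_i + \bfo_j = \bfo_j + \bfo_i$ in $\N$. For (3), the natural transformation $\phi^i_V$ at object $\bfx$ is simply $V$ applied to the unique morphism $\bfx \to \bfx + \bfo_i$ in $\N$; naturality in $\bfx$ (i.e., compatibility with the unique morphisms $\bfx \to \bfy$) and in $V$ follow from functoriality. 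For (4), since $\Sigma_i$ is exact, $(K_iV)_\bfx = \ker\bigl(V_\bfx \to V_{\bfx+\bfo_i}\bigr)$, which is exactly the description asserted.

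For (5), one direction is tautological: if $V$ is torsion free, then by definition no nonzero element of any $V_\bfx$ is killed by any morphism out of $\bfx$, in particular not by $\bfx \to \bfx + \bfo_i$, so $\phi^i_V$ is injective. The converse is the only point requiring a small argument. Suppose $\phi^i_V$ is injective for every $i$, and let $v \in V_\bfx$ be a nonzero element with $V_f(v) = 0$ for some $f : \bfx \to \bfy$. In $\N$ the morphism $f$ is the unique map given by the componentwise difference $\bfy - \bfx = \sum_{j} (y_j - x_j)\bfo_j$, so $f$ factors as a finite composite of basic step morphisms of the form $\bfz \to \bfz + \bfo_j$. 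By (4) and the hypothesis, each such basic step acts injectively on $V$ at every object, so the composite is injective, contradicting $V_f(v) = 0$.

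For (6), I would compute directly from $P(\bfx)_\bfy = k\cdot\N(\bfx,\bfy)$, which is $k$ if $\bfx \leqslant \bfy$ and $0$ otherwise. Hence $(\Sigma_i P(\bfx))_\bfy = k\cdot\N(\bfx, \bfy + \bfo_i)$ is $k$ precisely when $\bfx \leqslant \bfy + \bfo_i$. If $x_i \geqslant 1$, this inequality is equivalent to $\bfx - \bfo_i \leqslant \bfy$, yielding $\Sigma_i P(\bfx) \cong P(\bfx - \bfo_i)$; if $x_i = 0$, then $\bfx \leqslant \bfy + \bfo_i$ reduces to $\bfx \leqslant \bfy$ since the $i$-th coordinate of $\bfx$ already vanishes, giving $\Sigma_i P(\bfx) \cong P(\bfx)$. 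To upgrade this to a natural isomorphism of functors (not just a pointwise bijection on values), one checks that the bijection is compatible with the transition maps of the two representable functors, which is immediate since both sides count the same inequalities of sequences.

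The only step with any content is the converse direction of (5); everything else is either a pointwise check or a direct computation of representables. I do not foresee an obstacle beyond making sure the factorization of arbitrary morphisms into basic steps is invoked cleanly.
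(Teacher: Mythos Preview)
Your proposal is correct and follows essentially the same approach as the paper. The paper's proof is extremely terse---parts (1)--(4) are dismissed as immediate from the definition and the square expressing the natural transformation $\mathrm{id}_{\N} \Rightarrow \iota_i$, part (5) is said to follow from (4), and part (6) is called ``a very simple observation''---so your more explicit arguments (especially the factorization of an arbitrary morphism into basic steps for the converse of (5), and the coordinate-by-coordinate computation of representables for (6)) simply spell out what the paper leaves implicit.
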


\begin{proof}
The first two statements follow readily from the definition. The third statement follows from the existence of a natural transformation between the identity functor on $\N$ and $\iota_i$, illustrated by the diagram
\[
\xymatrix{
\bfx \ar[r]^f \ar[d] & \bfy \ar[d]\\
\iota_i \bfx = \bfx+\bfo_i \ar[r]^-{\iota_i f} & \iota_i \bfy = \bfy+\bfo_i.
}
\]
This diagram also implies the forth statement, which in turn applies the fifth statement. The last statement holds by a very simple observation.
\end{proof}

Now we construct the shift functor along all directions, which intuitively can be viewed as an infinite composite of $\Sigma_i$ for all $i \in \mathbb{N}$. By statement (3) of Lemma \ref{properties of shift functor}, one can construct a sequence
\[
\xymatrix{
V \ar[r]^-{\phi_V^1} & \Sigma_1 V \ar[r]^-{\phi^2_{\Sigma_1 V}} & \Sigma_{[2]} V \ar[r]^-{\phi^3_{\Sigma_{[2]}V}} & \Sigma_{[3]}V \ar[r] & \ldots,
}
\]
where $\Sigma_{[n]} = \Sigma_n \circ \Sigma_{n-1} \circ \ldots \circ \Sigma_1$ for $n \geqslant 1$. Taking the colimit of this diagram we obtain an $\N$-module. Clearly, the rule assigning $V$ to this colimit is functorial, so induces an endofunctor in $\N \Mod$ denoted by $\BS$. Furthermore, there is a natural map from $V$ to $\BS V$ such that the following diagram commutes
\begin{equation} \label{diagram}
\xymatrix{
V \ar[r]^-{\phi^1_V} \ar[dr]_-{\psi} & \Sigma_1 V \ar[r]^-{\phi^2_{\Sigma_1V}} \ar[d]^-{\psi^1} & \Sigma_{[2]} V \ar[r] \ar[dl]^-{\psi^2} & \ldots\\
 & \BS V.
}
\end{equation}

We describe some useful facts about $\BS$ in the following lemma.

\begin{lemma} \label{sigma infinity}
Suppose that $V$ is a nonzero $\N$-module. One has:
\begin{enumerate}
\item $\BS$ is an exact functor;

\item $\gd(\BS V) \leqslant \gd(V)$, and $\gd(\BS V) \leqslant \gd(V) - 1$ when $\gd(V) \geqslant 1$;

\item for $i \in \mathbb{N}_+$, one has $\tht_i(\BS V) \leqslant \tht_i(V) - 1$ whenever $\tht_i(V) \geqslant 0$;

\item the map $\psi$ is injective if and only if $V$ is torsion free;

\item if $V$ is torsion free, so is $\BS V$.

\item if $V$ is torsion, so is $\BS V$.
\end{enumerate}
\end{lemma}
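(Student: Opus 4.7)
The plan is to exploit the description $\BS V = \varinjlim_n \Sigma_{[n]} V$ implicit in diagram (\ref{diagram}), so that each claim reduces either to a property of the individual $\Sigma_i$ collected in Lemma \ref{properties of shift functor} or to a standard fact about directed colimits in the Grothendieck category $\N \Mod$ (exactness, commutation with injections, lifting of elements to some finite stage). Throughout, one exploits that $\N$ is a poset, so any two paths between comparable objects coincide.

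For (1), each $\Sigma_i$ is exact by Lemma \ref{properties of shift functor}(1), hence so is each composite $\Sigma_{[n]}$; since directed colimits of short exact sequences are exact, $\BS$ is exact. For (2), I would pick a surjection $\bigoplus_{s \in S} P(\bfx_s) \twoheadrightarrow V$ with every $\|\bfx_s\| \leq \gd(V)$ and apply $\BS$. Iterating Lemma \ref{properties of shift functor}(6) along the defining sequence of $\BS$ shows $\BS P(\bfx)$ stabilises to $P(\bfy)$, where $\bfy_i = \max(\bfx_i - 1, 0)$, since each $\Sigma_i$ is applied exactly once and subtracts $\bfo_i$ whenever possible. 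Hence $\|\bfy_s\| = \max(\|\bfx_s\| - 1, 0)$, giving $\gd(\BS V) \leq \max(\gd(V) - 1, 0)$, which is both statements in (2). For (3), set $m = \tht_i(V) \geq 0$; for every $\bfx$ with $x_i \geq m$ and every $n \geq i$, the $i$-th entry of $\bfx + \bfo_1 + \cdots + \bfo_n$ is $x_i + 1 > m$, so by Remark \ref{torsion height} the transition $V_{\bfx + \bfo_1 + \cdots + \bfo_n} \to V_{\bfx + \bfo_i + \bfo_1 + \cdots + \bfo_n}$ is injective, and passing to the colimit yields $(\BS V)_{\bfx} \hookrightarrow (\BS V)_{\bfx + \bfo_i}$, so $\tht_i(\BS V) \leq m - 1$.

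Statement (4) splits into two directions. The ``if'' direction is formal: torsion freeness of $V$ propagates to every $\Sigma_{[n]} V$ (its stalks and transitions are relabellings of those of $V$), Lemma \ref{properties of shift functor}(5) then makes every transition of the colimit system injective, and $\psi$ is a colimit of injections. For the converse, given a nonzero torsion $v \in V_{\bfx}$ with $V_f(v) = 0$ along $f \colon \bfx \to \bfy$, I would factor $f$ as an elementary chain $\bfx < \bfx + \bfo_{i_1} < \bfx + \bfo_{i_1} + \bfo_{i_2} < \cdots < \bfy$ and let $\bfz$ be the last node at which the image $v'$ of $v$ is still nonzero, with $\bfz' = \bfz + \bfo_j$ the next. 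Then $V_{\bfz \to \bfz'}(v') = 0$, and because $\N$ is a poset the unique morphism $\bfz \to \bfz + \bfo_1 + \cdots + \bfo_n$ factors through $\bfz'$ for every $n \geq j$, killing $v'$ at every stage of the colimit defining $(\BS V)_{\bfz}$; hence $\psi_{\bfz}(v') = 0$ with $v' \neq 0$. Pinpointing this ``critical step'' of the chain is the only place in the whole lemma where a non-formal argument is needed, and I expect it to be the main obstacle.

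For (5), the propagation of torsion freeness already noted implies each $\Sigma_{[n]} V$ is torsion free and each transition is injective. Given $w \in (\BS V)_{\bfx}$ with $(\BS V)_h(w) = 0$ for some $h \colon \bfx \to \bfy$, lift $w$ to $w_n \in V_{\bfx + \bfo_1 + \cdots + \bfo_n}$; since all subsequent transitions are injective, the image of $w_n$ in $V_{\bfy + \bfo_1 + \cdots + \bfo_n}$ itself vanishes, and torsion freeness of $V$ forces $w_n = 0$, hence $w = 0$. For (6), lift $w \in (\BS V)_{\bfx}$ to $w_n \in V_{\bfx + \bfo_1 + \cdots + \bfo_n}$, which is torsion in $V$ by hypothesis, say annihilated by $g \colon \bfx + \bfo_1 + \cdots + \bfo_n \to \bfz$. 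Uniqueness of morphisms in $\N$ then forces the image of $w_n$ in $V_{\bfz + \bfo_1 + \cdots + \bfo_m}$ to vanish for every $m \geq n$, so the structure map of $\BS V$ from $\bfx$ to $\bfz$ sends $w$ to $0$, showing $w$ is torsion in $\BS V$.
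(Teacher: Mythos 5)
Your proof is correct, and the overall architecture—reduce everything to the directed-colimit description $\BS V = \varinjlim_n \Sigma_{[n]} V$, and exploit that for a fixed object $\bfx$ the colimit is computed as $\varinjlim_n V_{\bfx + \bfo_1 + \cdots + \bfo_n}$—is the same as the paper's. Parts (1), (2), (3), the forward direction of (4), and (6) are essentially identical. The two places where you diverge are minor. For the backward direction of (4), you locate the ``critical step'' of an elementary chain where the torsion element first dies and then observe that the unique morphism to $\bfz + \bfo_1 + \cdots + \bfo_n$ factors through $\bfz' = \bfz + \bfo_j$ once $n \geqslant j$; the paper instead invokes torsion heights and the inequality $\tht_i(\Sigma_{[i]} V) < \tht_i(V)$, which is a slightly indirect route to the same factoring observation, and your element-level version is in fact cleaner. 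For (5), you give a direct element argument (lift, use injectivity of all transitions in $\BS V$ and torsion-freeness of $V$, conclude $w_n = 0$); the paper gives a two-line argument that $\psi_V$ is injective, hence by exactness of $\BS$ so is $\BS\psi_V$, hence $\BS V$ is torsion free by (4)---which is slicker but tacitly identifies $\BS\psi_V$ with $\psi_{\BS V}$ (true for $\N$-modules by Fubini for colimits over the poset, though the paper does not remark on it). Neither deviation constitutes a genuinely different method; they are local substitutions of hands-on diagram chases for the paper's slightly more functorial shortcuts, and both are valid.
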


\begin{proof}
All facts follow from standard arguments about direct limits.

(1) This is clear since $\BS$ is the direct limit of a sequence of exact functors, and hence is exact.

(2) First we prove the conclusion for the special case that $V = P(\bfx)$. Let $N$ be the minimal integer such that $x_i = 0$ for $i > N$. By (6) of Lemma \ref{properties of shift functor}, $\Sigma_{[i]} V \cong P(\bfy)$ for $i \geqslant N$ where
\[
y_j =
\begin{cases}
x_j, & \text{if } x_j = 0;\\
x_j - 1, & \text{if } x_j \geqslant 1.
\end{cases}
\]
Furthermore, all but finitely many maps appearing in diagram (3.1) are isomorphisms. Consequently, $\BS P(\bfx) \cong P(\bfy)$, and $\|\bfy\| = \|\bfx\| -1$ whenever $\|\bfx\| \geqslant 1$, so the conclusion holds for this special case.

Now consider the general case. Take a surjection
\[
P = \bigoplus_{\bfx \in \supp(H_0(V))} P(\bfx)^{c_{\bfx}} \to V
\]
where the multiplicity $c_{\bfx}$ might be infinity. It is clear that $\gd(P) = \gd(V)$. Applying $\BS$ to it we obtain a surjection $\BS P \to \BS V$. If $\gd(V) \geqslant 1$, the conclusion for the special case asserts that $\gd(\BS P) = \gd(P) - 1$, so
\[
\gd(\BS V) \leqslant \gd(\BS P) = \gd(P) - 1 = \gd(V) - 1
\]
as desired.

(3) Without loss of generality we can assume that $\tht_i(V) = n \geqslant 0$ is finite. By Remark \ref{torsion height}, for each object $\bfx$ with $x_i \geqslant n+1$, the map $V_f: V_{\bfx} \to V_{\bfx + \bfo_i}$ is injective. Then when $s \geqslant i$, for any object $\bfy$ with $y_i \geqslant n$, the map $(\Sigma_{[s]} V)_{\bfy} \to (\Sigma_{[s]} V)_{\bfy + \bfo_i}$ is injective by Remark \ref{torsion height} since $(\Sigma_{[s]} V)_{\bfy} = V_{\bfz}$ where
\[
z_j = \begin{cases}
y_j + 1, & j \leqslant s;\\
y_j, & j \geqslant s+1,
\end{cases}
\]
and in particular $z_i = y_i + 1 \geqslant n+1$. Taking direct limit for the commutative diagram
\[
\xymatrix{
V_{\bfy} \ar[r] \ar[d] & (\Sigma_1 V)_{\bfy} \ar[r] \ar[d] & (\Sigma_{[2]} V)_{\bfy} \ar[r] \ar[d] & \ldots\\
V_{\bfy + \bfo_i} \ar[r] & (\Sigma_1 V)_{\bfy + \bfo_i} \ar[r] & (\Sigma_{[2]} V)_{\bfy + \bfo_i} \ar[r] & \ldots\\
}
\]
and noting that all but finitely many vertical maps are injective, we conclude that the map $(\BS V)_{\bfy} \to (\BS V)_{\bfy + \bfo_i}$ is also injective. Again, by Remark \ref{torsion height}, it follows that $\tht_i(\BS V) \leqslant \tht_i(V) - 1$.

(4) If $V$ is not torsion free, then by statement (5) of Lemma \ref{properties of shift functor}, there is a certain $i \in \mathbb{N}_+$ such that the natural map $V \to \Sigma_i V$ is not injective; that is, $\tht_i(V) \geqslant 0$. It is easy to check that $\tht_i(\Sigma_{[i]} V) < \tht_i(V)$. Consequently, in the top row of diagram (\ref{diagram}), the composite map from $V$ to $\Sigma_{[i]} V$ is not injective, and hence the map $\psi$ cannot be injective.

Conversely, if $V$ is torsion free, then all maps in the top row of diagram (\ref{diagram}) are injective. If $\psi$ is not injective, then one may find an object $\bfx$ and a certain $0 \neq v \in V_{\bfx}$ such that $v$ eventually becomes 0 by maps in the top row, which is impossible.

(5) This is clear. Indeed, if $V$ is torsion free, then by the previous statement, the natural map $V \to \BS V$ is injective. Since $\BS$ is exact, the natural map $\BS V \to \BS (\BS V)$ is also injective. Again by the previous statement, $\BS V$ is torsion free.

(6) For each object $\bfx \in \Ob(\N)$ as well as an arbitrary element $v \in (\BS V)_{\bfx}$, we want to show that $v$ is torsion. By definition, $(\BS V)_{\bfx}$ is the direct limit of the following sequence
\[
V_{\bfx} \to (\Sigma_1 V)_{\bfx} \to (\Sigma_{[2]} V)_{\bfx} \to \ldots.
\]
Then for a sufficiently large $n$, we can find a representative $\tilde{v} \in (\Sigma_{[n]} V)_{\bfx}$ of $v$. It is easy to see that $\Sigma_{[n]} V$ is torsion, so we can find another object $\bfy$ such that the map $f: \bfx \to \bfy$ sends $\tilde{v}$ to $0 \in (\Sigma_{[n]} V)_{\bfy}$. By looking at the commutative diagram
\[
\xymatrix{
(\Sigma_{[n]} V)_{\bfx} \ar[r] \ar[d] & (\Sigma_{[n]} V)_{\bfy} \ar[d]\\
(\BS V)_{\bfx} \ar[r] & (\BS V)_{\bfy},
}
\]
we conclude that $v$ is sent to 0 by $f$ as well.
\end{proof}

\begin{remark}
In general one can not expect that $\gd(\BS V) = \gd(V) - 1$ if $\gd(V) \geqslant 1$. For example, let $V$ be a torsion free module such that $V_{\bfx} = 0$ if $\| \bfx \| \leqslant 1$, and $V_{\bfx} = k$ otherwise. Then $\gd(V) \geqslant 2$ since clearly the object $(2, \, 0, \, 0, \ldots)$ is contained in $\supp(H_0(V))$. However, a direct computation shows that $\BS V \cong k\N(\boldsymbol{0}, -)$, so $\gd(\BS V) = 0$.
\end{remark}

\begin{remark} \label{kernel}
Let $\boldsymbol{K}$ be the kernel functor induced by the natural transformation $\psi$ in diagram (\ref{diagram}). It is easy to see that an element $v \in V_{\bfx}$ lies in $(\boldsymbol{K} V)_{\bfx}$ if there is a non-invertible morphism $f: \bfx \to \bfy$ such that $x_i \leqslant y_i \leqslant x_i+1$ for $i \in \mathbb{N}_+$ and $V_f(v) = 0$.
\end{remark}

Now we introduce a type of special $\N$-modules, which is a generalization of projective $\N$-modules.

\begin{definition}
Let $\N_{\bfx}$ be the subcategory of $\N$ consisting of the single object $\bfx$. A $\N$-module $V$ is said to be induced at $\bfx$ if $V$ is isomorphic to the left Kan extension of $V_{\bfx}$ with respect to the inclusion functor $\N_{\bfx} \to \N$; that is, $V \cong k\N \otimes_{k\N_{\bfx}} V_{\bfx}$.
\end{definition}

If $V$ is induced at $\bfx$, then for $\bfy \in \Ob(\N)$, $V_{\bfy} \cong V_{\bfx}$ if $\bfy \geqslant \bfx$, and $V_{\bfy} = 0$ otherwise. Correspondingly, each map $V_f$ is either an isomorphism or the zero map.

\begin{proposition}
Let $V$ be an $\N$-module presented in finite degrees. Then for $s \geqslant \pd(V)$, $\BS^s V$ is either the zero module or induced at $\boldsymbol{0}$, where $\BS^s$ is the composite of $s$ copies of $\BS$.
\end{proposition}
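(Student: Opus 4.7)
The plan is to exploit the two quantitative facts already available: Proposition \ref{bounding torsion height} tells us that each torsion height $\tht_i(V)$ is at most $\pd(V)-1$, and Lemma \ref{sigma infinity} tells us that a single application of $\BS$ strictly decreases $\gd$ (when it is positive) and strictly decreases each $\tht_i$ (when it is nonnegative). Setting $n = \pd(V)$, I will show that $\BS^n V$ is simultaneously generated at $\boldsymbol{0}$ and torsion free, which, because $\boldsymbol{0}$ is the minimum of $\N$ and every Hom-set from $\boldsymbol{0}$ is a singleton, forces it to be induced at $\boldsymbol{0}$. The conclusion for $s > n$ then follows by observing that $\BS$ fixes modules induced at $\boldsymbol{0}$.

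First I would prove by induction on $s$ that $\gd(\BS^s V) \leqslant \max\{\gd(V) - s, \, 0\}$, using statement (2) of Lemma \ref{sigma infinity}; since $\gd(V) \leqslant n$, this gives $\gd(\BS^n V) \leqslant 0$. Next I would show the analogous statement for torsion heights: $\tht_i(\BS^s V) \leqslant \max\{\tht_i(V) - s, \, -1\}$ for every $i$. Statement (3) of Lemma \ref{sigma infinity} handles the inductive step whenever the current torsion height is nonnegative; the remaining case $\tht_i(W) = -1$ needs a brief separate check, namely that if every map $W_{\bfx} \to W_{\bfx+\bfo_i}$ is injective, then the induced maps $(\BS W)_{\bfx} \to (\BS W)_{\bfx + \bfo_i}$ are injective as well. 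This is because $(\BS W)_{\bfx}$ is the filtered colimit of $W_{\bfx + \bfo_1 + \cdots + \bfo_s}$, and filtered colimits preserve injections. Since $\tht_i(V) \leqslant n-1$ by Proposition \ref{bounding torsion height}, after $n$ applications we have $\tht_i(\BS^n V) = -1$ for every $i$, i.e.\ $\BS^n V$ is torsion free.

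Now let $W = \BS^n V$; we have $\gd(W) \leqslant 0$ and $W$ is torsion free. If $W = 0$ we are done, so assume not. The condition $\gd(W) = 0$ means $W_{\bfx}$ is the image of $W_{\boldsymbol{0}}$ under the (unique) morphism $\boldsymbol{0} \to \bfx$ for every $\bfx$, and torsion-freeness makes that morphism also injective, so $W_{\boldsymbol{0}} \to W_{\bfx}$ is an isomorphism for every $\bfx$. This is exactly the description of a module induced at $\boldsymbol{0}$, matching $k\N \otimes_{k\N_{\boldsymbol{0}}} W_{\boldsymbol{0}}$ as written in the definition.

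Finally, for $s \geqslant n$ I would show $\BS^s V \cong W$. Since every structural map of $W$ is an isomorphism, $\Sigma_{[m]} W = W$ with identity transition maps for every $m$, so the colimit defining $\BS W$ is $W$ itself; iterating gives $\BS^{s-n} W = W$. I do not anticipate a real obstacle: the main point is just to confirm the $\tht_i = -1$ case of the iteration (since Lemma \ref{sigma infinity}(3) is only stated when $\tht_i \geqslant 0$) and to recognize that ``generated at $\boldsymbol{0}$ plus torsion free'' over the poset $\N$ is the same as ``induced at $\boldsymbol{0}$.''
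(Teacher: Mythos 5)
Your proof is correct and rests on exactly the same two pillars as the paper's (Lemma~\ref{sigma infinity} and Proposition~\ref{bounding torsion height}), but it is organized a little differently. The paper first splits $V$ via $0\to V_T\to V\to V_F\to 0$, uses statements (6), (3) and Proposition~\ref{bounding torsion height} to kill $\BS^s V_T$, and then uses statements (5) and (2) to handle $\BS^s V_F$; you instead track the two invariants $\gd$ and $\tht_i$ of $\BS^s V$ directly, without decomposing. Your route has to make explicit the small fact that $\tht_i = -1$ is preserved by $\BS$ (the paper leans on statement (5), global torsion-freeness, for its $V_F$ branch, and quietly relies on this same fact in its $V_T$ branch without stating it), and your filtered-colimit argument for that step is sound. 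In exchange you avoid the case split, which in the paper is worded ``if $V$ is not torsion free'' where ``not torsion'' is what is really meant; your uniform treatment sidesteps that wrinkle. The identification of ``torsion free with $\gd = 0$'' with ``induced at $\boldsymbol{0}$,'' and the observation that $\BS$ fixes such modules to extend from $s = \pd(V)$ to all $s \geqslant \pd(V)$, are both correct and match the paper's conclusion.
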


\begin{proof}
When $V$ is torsion, $\BS^s V$ is also torsion by statement (6) of Lemma \ref{sigma infinity}. However, by statement (3) of this lemma as well as Proposition \ref{bounding torsion height}, we know that $\tht_i(\BS^s V) = -1$ for all $i \geqslant 1$; that is, $\BS^s V$ is torsion free. This happens if and only if $\BS^s V$ is the zero module.

If $V$ is not torsion free, we consider the short exact sequence $0 \to V_T \to V \to V_F \to 0$ where $V_T$ is the torsion part and $V_F$ is the torsion free part. Applying $\BS^s$ to it and using the same argument as in the previous paragraph we deduce that $\BS^s V_T = 0$, so $\BS^s V \cong \BS^s V_F$, which is torsion free by statement (5) of Lemma \ref{sigma infinity}, and whose generation degree is at most 0 by statement (2) of Lemma \ref{sigma infinity}. Since $\BS^s V_F$ cannot be the zero module, it follows that it is a torsion free $\N$-module with generation degree 0, which is clearly induced at $\boldsymbol{0}$.
\end{proof}

\begin{corollary}
If $V$ is a torsion free $\N$-module presented in finite degrees, then it can be embedded into an $\N$-module induced at $\boldsymbol{0}$.
\end{corollary}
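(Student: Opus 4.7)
The plan is to apply the preceding proposition with $s = \pd(V)$ and show that the iterated natural map $V \to \BS^s V$ is injective; then the codomain is either $0$ or induced at $\boldsymbol{0}$, and the first alternative is ruled out by having a nonzero submodule. So I would discard the trivial case $V = 0$ first and assume throughout that $V$ is a nonzero torsion free $\N$-module with $\pd(V) = s$ finite.

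Next I would verify that torsion freeness propagates through the shift. By statement (5) of Lemma \ref{sigma infinity}, $\BS$ preserves torsion freeness, so an immediate induction on $i$ gives that $\BS^i V$ is torsion free for every $i \geqslant 0$. Having this in hand, statement (4) of Lemma \ref{sigma infinity} applied to each torsion free module $\BS^i V$ shows that the natural transformation $\psi$ evaluated at $\BS^i V$ yields an injective map $\BS^i V \hookrightarrow \BS^{i+1} V$. Composing these $s$ injections produces an injection $V \hookrightarrow \BS^s V$.

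Finally, the preceding proposition says $\BS^s V$ is either zero or induced at $\boldsymbol{0}$. Since $V$ injects into $\BS^s V$ and $V \neq 0$, the first alternative is excluded, so $\BS^s V$ is induced at $\boldsymbol{0}$ and provides the desired embedding. There is no real obstacle here; the only point that deserves a line of justification is that the composite $V \to \BS^s V$ one gets by stacking the maps $\psi_{\BS^i V}$ coincides with the canonical map arising from $s$-fold iteration of the shift, which is immediate from the functoriality of $\BS$ and the naturality of $\psi$.
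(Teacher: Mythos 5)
Your proposal is correct and follows essentially the same route as the paper: the paper also takes $s = \pd(V)$ and observes that the composite $V \to \BS V \to \cdots \to \BS^s V$ is injective with $\BS^s V$ induced at $\boldsymbol{0}$. You simply spell out the two points the paper leaves implicit, namely that injectivity of each step comes from combining parts (4) and (5) of Lemma~\ref{sigma infinity}, and that $\BS^s V$ cannot be zero because $V$ embeds into it.
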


\begin{proof}
Let $s = \pd(V)$. The composite of the following sequence is an injective map
\[
V \to \BS V \to \BS^2 V \to \ldots \to \BS^s V
\]
with $\BS^s V$ the desired $\N$-module induced at $\boldsymbol{0}$.
\end{proof}

In the rest of this subsection we want to generalize the above results to arbitrary $\N$-modules. For this purpose, we need to introduce the ``infinite product" of $\BS$. Given an $\N$-module $V$, we have a sequence of natural maps
\[
\xymatrix{
V \ar[r]^-{\psi_V} & \BS V \ar[r]^-{\psi_{\BS V}} & \BS^2 V \ar[r] & \ldots.
}
\]
We denote the colimit of this sequence in $\N \Mod$ by $\BS^{\infty} V$. Clearly, this is a functorial construction, and there is a natural map $\boldsymbol{\psi}_V: V \to \BS^{\infty} V$. Moreover, $\BS^{\infty}$ is an exact functor.

\begin{remark}
We construct $\BS^{\infty}$ as the direct limit of $\BS$, which itself is a direct limit. Actually, one can define $\BS^{\infty}$ in one step. Given an $\N$-module $V$, we define a functor $\boldsymbol{V}: \N \to \N \Mod$ as follows: for $\bfx = (x_1, \, \ldots \, x_r, \, 0, \, 0, \, \ldots) \in \Ob(\N)$, define $\boldsymbol{V}_{\bfx} = \Sigma_1^{x_1} \ldots \Sigma_r^{x_r} V$. When $\bfx \leqslant \bfy$, the morphism from $\boldsymbol{V}_{\bfx}$ to $\boldsymbol{V}_{\bfy}$ is given by natural transformations between the functor $\Sigma_1^{x_1} \ldots \Sigma_r^{x_r}$ and the functor $\Sigma_1^{y_1} \ldots \Sigma_r^{y_s}$, which is well defined by statement (2) of Lemma \ref{properties of shift functor}. One can check that the colimit of $\boldsymbol{V}$ in $\N \Mod$ is precisely $\BS^{\infty} V$. We will give one more description for $\BS^{\infty}$ in Proposition \ref{sheafification}.
\end{remark}

\begin{proposition} \label{Sigma infinity}
Let $V$ be an $\N$-module. One has:
\begin{enumerate}
\item If $V$ is torsion, then $\BS^{\infty} V = 0$; otherwise, $\BS^{\infty} V$ is induced at $\boldsymbol{0}$.

\item If $V$ is torsion free, then it can be embedded into $\BS^{\infty} V$.
\end{enumerate}
\end{proposition}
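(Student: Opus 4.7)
The plan is to pin down the values $(\BS^{\infty} V)_{\bfy}$ explicitly and show they are canonically identified with a single colimit independent of $\bfy$. Using the one-step description of $\BS^{\infty}$ given in the remark immediately preceding the proposition (as the colimit of the functor $\boldsymbol{V}$ on $\N$), a direct unwinding yields
\[
(\BS^{\infty} V)_{\bfy} \;\cong\; \varinjlim_{\bfz \geq \bfy} V_{\bfz}.
\]
The crucial observation is that for any $\bfx \leq \bfy$ in $\N$, the subsystem $\{\bfz : \bfz \geq \bfy\}$ is cofinal in $\{\bfz : \bfz \geq \bfx\}$, because $\N = \mathbb{N}^\infty$ is a lattice and $\bfz \vee \bfy$ dominates both $\bfz$ and $\bfy$. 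Consequently, the structure map $(\BS^{\infty} V)_{\bfx} \to (\BS^{\infty} V)_{\bfy}$ is an isomorphism onto the common colimit $L := \varinjlim_{\bfz \in \Ob(\N)} V_{\bfz}$, and every transition map of the $\N$-module $\BS^{\infty} V$ is this same identification on $L$.

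Given this identification, statement (1) falls out quickly. If $V$ is torsion, then every $v \in V_{\bfz}$ is killed by some transition map $V_{\bfz} \to V_{\bfw}$ with $\bfw \geq \bfz$, so $v$ vanishes in $L$; hence $L = 0$ and $\BS^{\infty} V = 0$. If $V$ is not torsion, apply the exact functor $\BS^{\infty}$ to the short exact sequence $0 \to V_T \to V \to V/V_T \to 0$ to obtain $\BS^{\infty} V \cong \BS^{\infty}(V/V_T)$, where $V/V_T$ is torsion free and nonzero. Then $\BS^{\infty} V$ has every value equal to the nonzero colimit $L$, with all transition maps being isomorphisms, which is precisely the assertion that $\BS^{\infty} V$ is induced at $\boldsymbol{0}$.

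For statement (2), Lemma \ref{sigma infinity}(4) gives that $\psi_V : V \to \BS V$ is injective when $V$ is torsion free. By Lemma \ref{sigma infinity}(5), each $\BS^k V$ remains torsion free, so each successive map $\BS^k V \to \BS^{k+1} V$ is injective as well. Since filtered colimits are exact in the Grothendieck category $\N \Mod$, the induced map $\boldsymbol{\psi}_V : V \to \BS^{\infty} V$ is injective, as required.

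The main point of genuine work is the cofinality argument underlying the identification of $(\BS^{\infty} V)_{\bfy}$ with $L$. Tracing through the iterated colimit requires showing that sums of the form $\bfo_1 + \cdots + \bfo_n$ and their iterates are cofinal among all finitely supported nonnegative sequences; this reduces to the elementary observation that $a \cdot (\bfo_1 + \cdots + \bfo_N)$ dominates any $\bfm$ with $\mathrm{supp}(\bfm) \subseteq \{1,\ldots,N\}$ and $\max_i m_i \leq a$. Once this cofinality is in place, every remaining assertion is a formal consequence of the exactness of $\BS^{\infty}$ and the torsion-theoretic properties of $\BS$ already recorded in Lemma \ref{sigma infinity}.
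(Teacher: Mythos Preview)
Your proof is correct, and for part (2) it matches the paper's argument verbatim. For part (1), however, you take a genuinely different route from the paper.

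The paper argues at the level of the iterated definition $\BS^{\infty} V = \varinjlim_r \BS^r V$. In the torsion case it picks a representative $\tilde v \in (\BS^r V)_{\bfx}$ of an arbitrary element of $(\BS^{\infty} V)_{\bfx}$, invokes Lemma~\ref{sigma infinity}(6) to find a morphism killing $\tilde v$, and then uses the kernel description of Remark~\ref{kernel} to show $\tilde v$ dies after finitely many further applications of $\BS$. In the non-torsion case it does not compute the values of $\BS^{\infty} V$ directly; instead it shows $\BS^{\infty} V$ is torsion free (via Lemma~\ref{sigma infinity}(4),(5)) and then proves $\gd(\BS^{\infty} V)=0$ by computing $\BS^{\infty} P(\bfx) \cong P(\boldsymbol{0})$ and pushing a projective surjection through the exact functor $\BS^{\infty}$.

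Your approach bypasses all of this by invoking the one-step description in the remark and reading off $(\BS^{\infty} V)_{\bfy} \cong \varinjlim_{\bfz \geq \bfy} V_{\bfz}$; the cofinality of $\{\bfz \geq \bfy\}$ in $\Ob(\N)$ then immediately makes every value equal to the global colimit $L$ with all structure maps being isomorphisms. This is shorter and more conceptual, and it makes the ``induced at $\boldsymbol{0}$'' conclusion transparent without ever touching projective generators or the kernel analysis. The only mild cost is that you are relying on the one-step description, which the paper states without proof; your final paragraph does supply the missing cofinality check, so this is not a gap. One small point you leave implicit: that $L \neq 0$ when $V/V_T$ is nonzero torsion free follows because a filtered colimit of injections is injective on each component, so any nonzero $v \in (V/V_T)_{\bfx}$ survives in $L$.
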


\begin{proof}
(1) Suppose that $V$ is torsion. Then for any $\bfx \in \Ob(\N)$ and any $v \in (\BS^{\infty} V)_{\bfx}$, one can find a representative $\tilde{v} \in (\BS^r V)_{\bfx}$ for a large enough $r$. By statement (6) of Lemma \ref{sigma infinity}, $\BS^r V$ is also torsion. In particular, we can find a morphism $f: \bfx \to \bfy$ such that $f$ sends $\tilde{v}$ to 0. Let
\[
s = \max\{ y_i - x_i \mid i \in \mathbb{N}_+ \}.
\]
By Remark \ref{kernel}, the kernel on $\bfx$ of the composite of the following maps
\[
\BS^r V \to \BS^{r+1} V \to \ldots \to \BS^{r+s} V
\]
consists of $u \in (\BS^r V)_x$ satisfying the following condition: there is an object $\bfz \in \Ob(\N)$ such that $z_i \leqslant x_i + s$ for $i \geqslant 1$ and the morphism $g: \bfx \to \bfz$ sends $u$ to 0. Therefore, $\tilde{v}$ is contained in this kernel, so $\tilde{v}$ becomes 0 in $(\BS^{r+s}V)_{\bfx}$. Thus $v = 0$ and hence $\BS^{\infty} V$ is the zero module.

In the case that $V$ is not torsion, it is easy to see that $\BS^{\infty} V \cong \BS^{\infty} V_F$, where $V_F$ is the torsion free part of $V$. Thus without loss of generality we can assume that $V$ is a nonzero torsion free $\N$-module. In this case, natural maps in the following sequence
\[
V \to \BS V \to \BS^2 V \to \ldots
\]
are all injective and all terms in this sequence are nonzero torsion free $\N$-modules by statements (4) and (5) of Lemma \ref{sigma infinity}. Consequently, $\BS^{\infty} V$ is a nonzero torsion free $\N$-module.

It remains to show that $\gd(\BS^{\infty} V) = 0$. But this is clear. Indeed, for the special case that $V = P(\bfx)$, we have $\BS^{s} V \cong P(\boldsymbol{0})$ for $s \geqslant \| \bfx \|$, so $\BS^{\infty} V \cong P(\boldsymbol{0})$. Taking a surjective homomorphism
\[
\bigoplus_{\bfx \in \Ob(\N)} P(\bfx)^{c_{\bfx}} \to V
\]
and applying $\BS^{\infty}$ we have a surjective homomorphism
\[
\BS^{\infty} (\bigoplus_{\bfx \in \Ob(\N)} P(\bfx)^{c_{\bfx}}) \cong \bigoplus_{\bfx \in \Ob(\N)}(\BS^{\infty} P(\bfx))^{c_{\bfx}} \to \BS^{\infty} V.
\]
Consequently, $\gd(\BS^{\infty} V) \leqslant 0$. Since $\BS^{\infty} V \neq 0$, it follows that $\gd(\BS^{\infty} V) = 0$.

(2) Since all maps in the sequence $V \to \BS V \to \BS^2 V \to \ldots$ are injective, the induced map $V \to \BS^{\infty} V$ is injective as well and gives the desired embedding.
\end{proof}

\begin{remark} \label{cokernel is torsion}
It is easy to see that $\BS^{\infty} V = 0$ if and only if $V$ is torsion. Moreover, the kernel of the map $\boldsymbol{\psi}: V \to \BS^{\infty} V$ coincides with the torsion part $V_T$ of $V$.
\end{remark}

\section{Homological degrees of representations of $\mathbb{N}^{\infty}$-type combinatorial categories}

Let $\C$ be an arbitrary combinatorial category of type $\mathbb{N}^{\infty}$, and $k$ a commutative ring. In this section we give an upper bound for homological degrees of $\C$-modules in terms of their first two homological degrees, generalizing Proposition \ref{bound hds}.

Denote by $\N$ the poset $\mathbb{N}^{\infty}$, and choose a functor $\iota: \N \to \C$ such that $\iota$ is a bijection restricted to object sets. The functor $\iota: \N \to \C$ induces a restriction functor $\downarrow_{\N}^{\C}: \C \Mod \to \N \Mod$ via pullback. The following observation plays a key role for us to prove the main result in this section.

\begin{lemma} \label{res preserves projectives}
Given $\bfx \in \Ob(\C)$, let $P(\bfx) = k\C(\bfx, -)$, $Q(\bfx) = k\N(\bfx, -)$, and $G_{\bfx} = \C(\bfx, \, \bfx)$. Then
\[
P(\bfx) \downarrow_{\N}^{\C} \cong Q(\bfx)^{|G_{\bfx}|},
\]
where $|G_{\bfx}|$ is the order of $G_{\bfx}$.
\end{lemma}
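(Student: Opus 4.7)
The plan is to construct an explicit isomorphism of $\N$-modules by exploiting the regular right action of $G_{\bfx}$ on hom sets of $\C$.

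First I would unfold the definition of the restriction: for $\bfy \in \Ob(\N)=\Ob(\C)$, one has $(P(\bfx)\downarrow_{\N}^{\C})_{\bfy}=k\C(\bfx,\bfy)$, and the action of the unique morphism $f\colon\bfy\to\bfz$ in $\N$ is given by left composition with $\iota(f)\in\C(\bfy,\bfz)$. In particular, this vanishes unless $\bfx\preccurlyeq\bfy$, and in that case it sends a $k$-basis element $\alpha\in\C(\bfx,\bfy)$ to $\iota(f)\circ\alpha\in\C(\bfx,\bfz)$.

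Next I would use the second defining axiom of a combinatorial category of type $\mathbb{N}^{\infty}$: for each $\bfy$ with $\bfx\preccurlyeq\bfy$, the group $G_{\bfx}$ acts regularly on $\C(\bfx,\bfy)$ from the right. Taking $\iota(\bfx\to\bfy)\in\C(\bfx,\bfy)$ as a base point, every morphism in $\C(\bfx,\bfy)$ can be written uniquely as $\iota(\bfx\to\bfy)\circ g$ with $g\in G_{\bfx}$. Hence the $k$-module $k\C(\bfx,\bfy)$ decomposes as
\[
k\C(\bfx,\bfy)\;=\;\bigoplus_{g\in G_{\bfx}} k\cdot\bigl(\iota(\bfx\to\bfy)\circ g\bigr).
\]
For each fixed $g\in G_{\bfx}$, I would define an $\N$-module homomorphism $\varphi_g\colon Q(\bfx)\to P(\bfx)\downarrow_{\N}^{\C}$ by sending the generator $1_{\bfx}\in Q(\bfx)_{\bfx}=k\N(\bfx,\bfx)$ to $g\in\C(\bfx,\bfx)$. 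Functoriality of $\iota$ then forces $\varphi_g$ to send the basis vector of $Q(\bfx)_{\bfy}=k\N(\bfx,\bfy)$ (for $\bfx\preccurlyeq\bfy$) to $\iota(\bfx\to\bfy)\circ g$, which agrees on $Q(\bfx)$-transitions because $\iota(f)\circ\iota(\bfx\to\bfy)=\iota(\bfx\to\bfz)$ in $\C$.

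Finally, I would assemble the $\varphi_g$ into a single map $\bigoplus_{g\in G_{\bfx}}Q(\bfx)\to P(\bfx)\downarrow_{\N}^{\C}$ and verify it is an isomorphism pointwise: on $\bfy$ with $\bfx\preccurlyeq\bfy$ it is the identification coming from the regular right action displayed above, and on $\bfy$ with $\bfx\not\preccurlyeq\bfy$ both sides vanish. The only real delicacy is keeping track of the base-point convention so that the transition maps on both sides match on each $g$-summand; once the identity $\iota(f)\circ(\iota(\bfx\to\bfy)\circ g)=\iota(\bfx\to\bfz)\circ g$ is in hand, the rest is bookkeeping. No finiteness of $G_{\bfx}$ is needed, provided the exponent $|G_{\bfx}|$ is interpreted as a direct sum indexed by $G_{\bfx}$.
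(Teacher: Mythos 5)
Your proposal is correct and follows essentially the same approach as the paper: both exploit the regular right $G_{\bfx}$-action to decompose $\C(\bfx,\bfy)$ into $G_{\bfx}$-many copies of $\N(\bfx,\bfy)$, with the paper writing this as the disjoint union $\C(\bfx,-)=\bigsqcup_{g\in G_{\bfx}}\N(\bfx,-)g$ and you packaging the same decomposition into maps $\varphi_g$. Your added remark that finiteness of $G_{\bfx}$ is not needed (reading $Q(\bfx)^{|G_{\bfx}|}$ as a direct sum indexed by $G_{\bfx}$) is a correct and mild refinement of what the paper states.
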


\begin{proof}
Note that $P(\bfx)$ as a free $k$-module has a basis consisting of morphisms in $\C(\bfx, -)$. Without loss of generality we can view $\N$ as a subcategory of $\C$, so $\N(\bfx, -)$ is a subset of $\C(\bfx, -)$. For $g \in \C(\bfx, \bfx)$, define
\[
\N(\bfx, -) g = \{fg \mid f \in \N(\bfx, -) \}.
\]
Since $G_{\bfx}$ acts regularly on $\C(\bfx, \bfy)$ for any $\bfy \in \Ob(\C)$, it follows that $\C(\bfx, -)$ is a disjoint union of $\N(\bfx, -) g$ with $g$ ranging over all elements in $G_{\bfx}$. The conclusion then follows from this observation.
\end{proof}

\begin{lemma} \label{compare hds}
Let $V$ be a $\C$-module, and $\| \bullet \|$ be either the sup norm or the sum norm on $\Ob(\C)$. Then $\gd(V) = \gd(V \downarrow_{\N}^{\C})$ and $\pd(V) = \pd(V \downarrow_{\N}^{\C})$.
\end{lemma}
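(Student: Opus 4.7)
The plan is to prove both equalities by identifying $H_0(V)$ and $H_0(V\downarrow_\N^\C)$ value-by-value as $k$-modules, then bootstrap this to the presentation degree via a projective resolution. Since both norms depend only on $\Ob(\C)=\Ob(\N)$, which is identified under $\iota$, the choice of norm plays no role beyond the comparison of supports.

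\textbf{Step 1: Generation degree.} For each $\bfx \in \Ob(\C)$, the value $(H_0(V))_{\bfx}$ is $V_{\bfx}$ modulo $\sum_{\bfy<\bfx}\sum_{f\in \C(\bfy,\bfx)} V_f(V_{\bfy})$, while $(H_0(V\downarrow))_{\bfx}$ is $V_{\bfx}$ modulo $\sum_{\bfy<\bfx} V_{f_0}(V_{\bfy})$, where $f_0$ denotes the unique morphism in $\N(\bfy,\bfx)$ (viewed inside $\C(\bfy,\bfx)$). The key observation is that the $\C$-sum collapses to the $\N$-sum. Indeed, given $f \in \C(\bfy,\bfx)$, transitivity of the left $G_{\bfx}$-action writes $f = g\circ f_0$ for some $g\in G_{\bfx}$; then regularity of the right $G_{\bfy}$-action produces a (unique) $h\in G_{\bfy}$ with $g\circ f_0 = f_0\circ h$, whence
\[
V_f(V_{\bfy}) = V_g\bigl(V_{f_0}(V_{\bfy})\bigr) = V_{f_0}\bigl(V_h(V_{\bfy})\bigr) = V_{f_0}(V_{\bfy})
\]
since $V_h$ is an automorphism. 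Summing over $f$ gives the claimed collapse, so $(H_0(V))_{\bfx}$ and $(H_0(V\downarrow))_{\bfx}$ coincide as $k$-modules; in particular $\supp H_0(V) = \supp H_0(V\downarrow)$, and the two generation degrees agree.

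\textbf{Step 2: Presentation degree.} Choose a projective cover in the homological sense: pick a projective $\C$-module $P = \bigoplus_{\bfx} P(\bfx)^{c_{\bfx}}$ and a surjection $P \twoheadrightarrow V$ that induces an isomorphism $H_0(P)\xrightarrow{\sim} H_0(V)$ (so in particular $\gd(P)=\gd(V)$). Setting $K=\ker(P\to V)$, the long exact Tor sequence gives $H_1(V)\cong H_0(K)$, hence $\hd_1(V)=\gd(K)$. Restriction along $\iota$ is exact, so $0\to K\downarrow\to P\downarrow\to V\downarrow\to 0$ is still exact; by Lemma~\ref{res preserves projectives}, $P\downarrow$ is a direct sum of $Q(\bfx)$'s and therefore remains projective in $\N\Mod$. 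Moreover, applying Step~1 to $P$ and $V$ shows that the surjection $P\downarrow\to V\downarrow$ induces an isomorphism on $H_0$ as well, so the same Tor argument gives $\hd_1(V\downarrow) = \gd(K\downarrow)$. Invoking Step~1 once more for $K$ yields $\gd(K\downarrow)=\gd(K)$, and combining these identities produces $\hd_1(V\downarrow)=\hd_1(V)$, hence $\pd(V)=\pd(V\downarrow)$.

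\textbf{Main difficulty.} The only genuinely content-bearing step is the collapse $\sum_{f\in\C(\bfy,\bfx)}V_f(V_{\bfy}) = V_{f_0}(V_{\bfy})$ in Step~1; everything else is a formal consequence together with the projective preservation from Lemma~\ref{res preserves projectives}. The collapse must use the full strength of the definition of combinatorial category of type $\mathbb{N}^\infty$: transitivity of the left action is what enables the factorization $f = g\circ f_0$, and freeness of the right action is what forces the conjugation identity $g\circ f_0 = f_0\circ h$ to have a solution $h\in G_{\bfy}$. Without both regularity conditions, $V_{f_0}(V_{\bfy})$ would not be $G_{\bfx}$-invariant and the two sums could differ, so this is the unique place where the hypotheses on $\C$ enter.
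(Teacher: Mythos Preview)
Your proof is correct and, in fact, sharper than the paper's. Where the paper establishes $\gd(V)=\gd(V\downarrow)$ by two separate inequalities (the containment $\supp H_0(V)\subseteq\supp H_0(V\downarrow)$ is immediate since $\N$ is a subcategory, while the reverse inequality is obtained indirectly via a projective cover and Lemma~\ref{res preserves projectives}), you prove the stronger pointwise identity $(H_0(V))_{\bfx}=(H_0(V\downarrow))_{\bfx}$ by the image-collapse argument. Likewise, for the presentation degree the paper sandwiches $\pd(V)$ and $\pd(V\downarrow)$ between each other, whereas your choice of $P$ with $H_0(P)\cong H_0(V)$ yields the exact equality $\hd_1(V)=\gd(K)=\gd(K\downarrow)=\hd_1(V\downarrow)$ in one stroke. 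Your route is more elementary and recovers strictly more (equality of $\hd_1$, not just of $\pd$); the paper's route has the mild advantage of not requiring one to check that the restricted map still induces an isomorphism on $H_0$.

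Two small cosmetic points. First, the detour through the left action is unnecessary: right \emph{transitivity} alone already gives $f=f_0\circ h$ for some $h\in G_{\bfy}$, and then $V_f(V_{\bfy})=V_{f_0}(V_h(V_{\bfy}))=V_{f_0}(V_{\bfy})$ directly. Second, in your ``Main difficulty'' paragraph you attribute the existence of $h$ to \emph{freeness} of the right action, but it is transitivity that supplies existence (freeness gives only uniqueness). Neither affects the validity of the argument.
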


\begin{proof}
Take a short exact sequence $0 \to W \to P \to V \to 0$ such that $P$ is a projective $\C$-module and $\gd(P) = \gd(V)$. Applying the restriction functor we get the following short exact sequence of $\N$-modules
\[
0 \to W \downarrow_{\N}^{\C} \to P \downarrow_{\N}^{\C} \to V \downarrow_{\N}^{\C} \to 0,
\]
which implies that
\[
\gd(V \downarrow_{\N}^{\C}) \leqslant \gd(P \downarrow_{\N}^{\C}) = \gd(P) = \gd(V)
\]
where the first equality comes from Lemma \ref{res preserves projectives}. On the other hand, for any object $\bfx \in \Ob(\C)$ such that $\bfx \in \supp(H_0(V))$, we know that
\[
\sum_{\bfy < \bfx \atop f \in \C(\bfy, \bfx)} V_f(V_{\bfy}) \subsetneqq V_{\bfx}.
\]
Since $\N$ can be viewed as a subcategory of $\C$, it follows that
\[
\sum_{\bfy < \bfx \atop f \in \N(\bfy, \bfx)} V_f(V_{\bfy}) \subsetneqq V_{\bfx},
\]
that is, $\bfx \in \supp(H_0(V \downarrow_{\N}^{\C}))$. Consequently, $\gd(V \downarrow_{\N}^{\C}) \geqslant \gd(V)$, so the first equality follows.

Now we turn to the second equality. On one hand, we have
\[
\hd_1(V) \leqslant \gd(W) = \gd(W \downarrow_{\N}^{\C}) \leqslant \max\{\gd(V \downarrow_{\N}^{\C}), \, \hd_1(V \downarrow_{\N}^{\C}) \} = \pd(V \downarrow_{\N}^{\C})
\]
by considering the above two short exact sequences. By the same argument, we also have
\[
\hd_1(V \downarrow_{\N}^{\C}) \leqslant \gd(W \downarrow_{\N}^{\C}) = \gd(W) \leqslant \max\{\gd(V), \, \hd_1(V) \} = \pd(V).
\]
Consequently, one has $\pd(V) \leqslant \pd(V \downarrow_{\N}^{\C})$ and $\pd(V \downarrow_{\N}^{\C}) \leqslant \pd(V)$, and the second equality holds.
\end{proof}

The main result of this section is a combination of Theorem \ref{main result 1} and Theorem \ref{main result 2}.

\begin{theorem} \label{thm I}
Let $\| \bullet \|$ be the sup norm on $\Ob(\C)$, and $V$ a $\C$-module. Then for $s \geqslant 0$, one has
\[
\hd_s(V) \leqslant \pd(V),
\]
so the category of $\C$-modules presented in finite degrees is abelian. Moreover, if $k$ is a field and $G_{\bfx}$ is a finite group for each $\bfx \in \Ob(\C)$, then the category of finitely presented $\C$-modules is abelian.
\end{theorem}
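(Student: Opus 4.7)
The plan is to reduce the first statement to Proposition \ref{bound hds} for $\N$-modules by systematically exploiting the restriction functor $\downarrow_{\N}^{\C}$. Lemma \ref{compare hds} already shows that $\gd$ and $\pd$ are preserved under restriction, and Lemma \ref{res preserves projectives} shows that restriction carries projective $\C$-modules to projective $\N$-modules (since $P(\bfx)\downarrow_{\N}^{\C} \cong Q(\bfx)^{|G_{\bfx}|}$). The key idea is to play the two $\Tor$ long exact sequences, one over $k\C$ and one over $k\N$, off against each other to control all higher homological degrees simultaneously.

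For the inequality $\hd_s(V) \leqslant \pd(V)$, I would induct on $s$, with the cases $s = 0$ and $s = 1$ immediate from the definition of $\pd$. For $s \geqslant 2$, pick a short exact sequence of $\C$-modules
\[
0 \to W \to P \to V \to 0
\]
with $P$ projective and $\gd(P) = \gd(V)$. The long exact sequence for $\Tor_{\ast}^{k\C}(k\C/\mathfrak{I}, -)$ yields $H_s(V) \cong H_{s-1}(W)$ for $s \geqslant 2$, hence $\hd_s(V) = \hd_{s-1}(W)$. By the induction hypothesis it suffices to show $\pd(W) \leqslant \pd(V)$; by Lemma \ref{compare hds} this is the same as $\pd(W\downarrow_{\N}^{\C}) \leqslant \pd(V)$. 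Restricting the above sequence gives an exact sequence of $\N$-modules with $P\downarrow_{\N}^{\C}$ projective, so the $k\N$ long exact sequence yields $\hd_1(W\downarrow_{\N}^{\C}) = \hd_2(V\downarrow_{\N}^{\C})$, while
\[
\gd(W\downarrow_{\N}^{\C}) = \gd(W) \leqslant \max\{\gd(V), \hd_1(V)\} = \pd(V).
\]
Proposition \ref{bound hds} together with Lemma \ref{compare hds} then gives $\hd_2(V\downarrow_{\N}^{\C}) \leqslant \pd(V\downarrow_{\N}^{\C}) = \pd(V)$, so $\pd(W\downarrow_{\N}^{\C}) \leqslant \pd(V)$, closing the induction. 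The abelian subcategory statement for $\C$-modules presented in finite degrees then follows from this inequality by the standard long exact sequence argument used in the second paragraph of the proof of Proposition \ref{bound hds}.

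For the second statement, under the hypotheses that $k$ is a field and each $G_{\bfx}$ is finite, I would first establish that a $\C$-module $V$ is finitely presented if and only if $V\downarrow_{\N}^{\C}$ is finitely presented as an $\N$-module. The forward direction uses Lemma \ref{res preserves projectives} together with the finiteness of $|G_{\bfx}|$: the restriction of a finite projective presentation of $V$ is a finite projective presentation of $V\downarrow_{\N}^{\C}$. Conversely, finite generation of $V\downarrow_{\N}^{\C}$ over $\N$ forces each $V_{\bfy_i}$ (at the finitely many generating objects $\bfy_i$) to be finite dimensional over the field $k$, producing a surjection $\bigoplus_i P(\bfy_i)^{\dim_k V_{\bfy_i}} \to V$; coherence of $k\N$ (Remark \ref{left coherence of kN}) then forces the kernel to restrict to a finitely generated $\N$-module and hence to be finitely generated over $k\C$. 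Given this equivalence, closure of finitely presented $\C$-modules under kernels and cokernels in $\C\Mod$ follows from the analogous closure for finitely presented $\N$-modules together with exactness of restriction, since $(\ker f)\downarrow_{\N}^{\C} = \ker(f\downarrow_{\N}^{\C})$ and $(\coker f)\downarrow_{\N}^{\C} = \coker(f\downarrow_{\N}^{\C})$.

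The main obstacle I anticipate is the potential circularity of the induction in the first statement. A naive attempt to bound $\hd_s(V) = \hd_{s-1}(W)$ by $\pd(W)$ reduces, via $\pd(W) = \max\{\gd(W), \hd_1(W)\}$, to bounding $\hd_1(W) = \hd_s(V)$ for $s = 2$, which is precisely what we set out to prove. The $\N$-detour breaks this loop because Proposition \ref{bound hds} unconditionally bounds $\hd_2(V\downarrow_{\N}^{\C})$ in terms of $\pd(V\downarrow_{\N}^{\C})$. A subsidiary subtlety lies in the two-way equivalence between finite presentation over $\C$ and over $\N$, which genuinely uses both the finiteness of $G_{\bfx}$ and the field hypothesis on $k$ to extract finite-dimensional generation data at each generating object.
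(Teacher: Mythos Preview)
Your proposal is correct and follows essentially the same route as the paper: the same induction on $s$, the same short exact sequence $0 \to W \to P \to V \to 0$, the same passage to $\N$ via Lemmas \ref{res preserves projectives} and \ref{compare hds}, and the same appeal to Proposition \ref{bound hds} to bound $\hd_2(V\downarrow_{\N}^{\C})$ and thereby close the loop on $\hd_1(W)$. For the second statement you spell out in more detail the two-way transfer of finite presentation between $\C$ and $\N$ (the paper compresses this to a single sentence invoking the finitely-generated equivalence from Lemma \ref{compare hds} together with Lemma \ref{submodules of free module} and Remark \ref{left coherence of kN}), but the substance is the same.
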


\begin{proof}
Without loss of generality we can assume that $V$ is presented in finite degrees. The conclusion holds trivially for $s \leqslant 1$. As before, take a suitable short exact sequence $0 \to W \to P \to V \to 0$ of $\C$-modules, which induces a short exact sequence of $\N$-modules:
\[
0 \to W \downarrow_{\N}^{\C} \to P \downarrow_{\N}^{\C} \to V \downarrow_{\N}^{\C} \to 0,
\]
For $s \geqslant 2$, we have
\begin{align*}
\hd_s(V) & = \hd_{s-1}(W) \leqslant \max\{ \gd(W), \, \hd_1(W) \} & \text{ by the induction hypothesis}\\
& = \max\{ \gd(W \downarrow_{\N}^{\C}), \, \hd_1(W \downarrow_{\N}^{\C}) \} & \text{ by Lemma \ref{compare hds}}\\
& = \max\{\gd(W), \, \hd_2(V \downarrow_{\N}^{\C}) \} & \text{ by Lemma \ref{compare hds}}\\
& \leqslant \max\{\gd(W), \, \gd(V \downarrow_{\N}^{\C}), \, \hd_1(V \downarrow_{\N}^{\C}) \} & \text{ by Proposition \ref{bound hds}}\\
& = \max\{\gd(W), \, \gd(V), \, \hd_1(V)\} & \text{ by Lemma \ref{compare hds}}\\
& = \max\{\gd(V), \, \hd_1(V)\}
\end{align*}
as desired.

Now suppose that $k$ is a field and each $G_{\bfx}$ is a finite group. The argument in the proof of Lemma \ref{compare hds} tells us that the $\C$-module $V$ is finitely generated if and only if so is the $\N$-module $V \downarrow_{\N}^{\C}$. The second statement then follows from this observation as well as Lemma \ref{submodules of free module} and Remark \ref{left coherence of kN}.
\end{proof}

\section{Representations of the Young lattice}

Throughout this section let $k$ be a field, $\N$ the poset $\mathbb{N}^{\infty}$, and let $\Y$ be the Young lattice, which is precisely the full subcategory of $\N$ consisting of objects $\bfx$ such that $x_i \geqslant x_j$ whenever $i \leqslant j$. We obtain a natural inclusion functor $\iota: \Y \to \N$ and hence a restriction functor $\iota^{\ast}: \N \Mod \to \Y \Mod$.

The category $\Y$ can also be viewed as a quotient category of $\N$ by the action of the infinite symmetric group. Explicitly, let
\[
S_{\infty} = \varinjlim S_n = \{\sigma: \mathbb{N} \to \mathbb{N} \mid \sigma \text{ is a bijection and } \sigma(n) = n \text{ for } n \gg 0 \}.
\]
This group naturally acts on $\N$ as follows: $\sigma \cdot \bfx = (x_{\sigma(1)}, \, x_{\sigma(2)}, \, \ldots)$. Moreover, $\bfx \leqslant \bfy$ if and only if $\sigma \cdot \bfx \leqslant \sigma \cdot \bfy$ for any $\sigma \in S_{\infty}$. Therefore, one can obtain a quotient poset $\N / S_{\infty}$, whose objects are orbits $[\bfx]$ under this action, and $[\bfx] \leqslant [\bfy]$ if and only if there are $\bfx' \in [\bfx]$ and $\bfy' \in [\bfy]$ such that $\bfx \leqslant \bfy$. It is easy to see that $\Y$ is isomorphic to this quotient poset. Via identifying $\Y$ with this quotient poset, we obtain a quotient functor $q: \N \to \Y$ which induces a fully faithful lift functor $q^{\ast}: \Y \Mod \to \N \Mod$. Explicitly, given a $\Y$-module $W$, the corresponded $\N$-module $q^{\ast} W$ has the following structure: for a morphism $f: \bfx \to \bfy$ in $\N$, one has $(q^{\ast} W)_{\bfx} = W_{[\bfx]}$, $(q^{\ast} W)_{\bfy} = W_{[\bfy]}$, and $(q^{\ast} W)_f$ is defined to be $W_{[f]}: W_{[\bfx]} \to W_{[\bfy]}$.

Since the composite $q \iota$ is the identity functor on $\Y$, it follows that $\iota^{\ast} q^{\ast}$ is isomorphic to the identity functor on $\Y \Mod$. Furthermore, the image of $q^{\ast}$ is the full subcategory of \textit{$S_{\infty}$-symmetric $\N$-modules}. To define this notion, let $V$ be an $\N$-module and $\sigma$ an element in $S_{\infty}$. One can define another $\N$-module $\sigma V$ as follows: for a morphism $f: \bfx \to \bfy$ in $\N$, one lets $(\sigma V)_{\bfx} = V_{\sigma \cdot \bfx}$, $(\sigma V)_{\bfy} = V_{\sigma \cdot \bfy}$, and $(\sigma V)_f$ is defined to be $V_{\sigma \cdot f}: V_{\sigma \cdot \bfx} \to V_{\sigma \cdot \bfy}$. We say that $V$ is $S_{\infty}$-symmetric if $\sigma V$ is isomorphic to $V$ for every $\sigma \in S_{\infty}$.

\begin{proposition} \label{antichain}
Let $k$ be a field. Then the category of finitely generated $\Y$-modules over $k$ is abelian.
\end{proposition}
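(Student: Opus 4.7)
The plan is to reduce the claim to the combinatorial fact that the Young lattice $\Y$ is a well-quasi-order, and then promote this to a local Noetherianity statement for $k\Y$.

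First I would establish that $\Y$ has no infinite antichain. A partition is a weakly decreasing finite sequence $\lambda = (\lambda_1, \ldots, \lambda_n)$ of positive integers, and the order $\lambda \leqslant \mu$ in $\Y$ is componentwise (padding with zeros). For two such sorted sequences, the componentwise order agrees with the Higman subsequence-embedding order: if $\lambda$ embeds into $\mu$ via indices $i_1 < \ldots < i_n$ with $\lambda_j \leqslant \mu_{i_j}$, then $i_j \geqslant j$ and the monotonicity of $\mu$ yields $\mu_{i_j} \leqslant \mu_j$, giving $\lambda_j \leqslant \mu_j$; the converse uses the identity embedding. Applying Higman's lemma to the well-quasi-order $\mathbb{N}_+$ then yields that $\Y$ is a well-quasi-order. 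In particular every antichain in $\Y$ is finite, so every upward-closed subset of $\Y$ has finitely many minimal elements.

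Next I would verify by induction on $n$ that every submodule $W$ of a finitely generated projective $P = \bigoplus_{i=1}^n P(\bfx_i)$ in $\Y \Mod$ is finitely generated. For $n=1$, the module $P(\bfx) = k\Y(\bfx,-)$ is one-dimensional on objects $\bfy \geqslant \bfx$ with identity structure maps, so a submodule is determined by the upward-closed subset $\{\bfy \mid W_{\bfy} \neq 0\}$, which has finitely many minimal elements by the previous paragraph. For $n > 1$, project onto the last summand to obtain a short exact sequence $0 \to W \cap \ker \pi_n \to W \to \pi_n(W) \to 0$ whose outer terms are finitely generated by induction and the base case, forcing $W$ to be finitely generated as well.

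Finally, the abelian subcategory claim reduces to closure under kernels: given a morphism $f: V \to V'$ between finitely generated $\Y$-modules, lift to a surjection $P \twoheadrightarrow V$ from a finitely generated projective and realize $\ker(f)$ as a quotient of a submodule of $P$, hence finitely generated by the previous step (closure under cokernels and finite direct sums is immediate). The main obstacle is the first step, establishing the well-quasi-order property of $\Y$, which is the essential combinatorial content motivating the name of the proposition.
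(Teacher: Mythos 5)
Your proof is correct and follows essentially the same route as the paper: reduce to showing that submodules of $P(\bfx)=k\Y(\bfx,-)$ are finitely generated, observe that such a submodule is controlled by an (up-closed) support set in $\Y$, and invoke Higman's lemma on weakly decreasing sequences to rule out infinite antichains. You spell out the standard induction to finite direct sums and the reduction to kernels, which the paper leaves implicit with ``it suffices,'' but the key combinatorial input and the structure of the argument are the same.
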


\begin{proof}
It suffices to show that every submodule $V$ of $k\Y(\bfx, -)$ is finitely generated for $\bfx \in \Ob(\Y)$. If $V$ is not finitely generated, then since the dimension of $V_{\bfy}$ is either 1 or 0 for $\bfy \in \Ob(\Y)$, it follows that $\supp(H_0(V))$ must be an infinite set. Furthermore, elements in this set are pairwise incomparable. Therefore, we get an infinite anti-chain in the poset $\Y$. This is impossible. Indeed, since $\mathbb{N}$ is an well-ordered set, it follows from Higman's lemma (see \cite{SSW}) that the set of finite sequences over $\mathbb{N}$ is quasi-well-ordered with respect to the following ordering:
\[
\bfx = (x_1, \, x_2, \, \ldots, \, x_m) \preccurlyeq \bfy = (y_1, \, y_2, \, \ldots, \, y_n)
\]
if and only if there is a strictly increasing map $\sigma: [m] \to [n]$ such that $x_i \leqslant y_{\sigma(i)}$ for each $i \in [m]$. But it is easy to see that $\bfx \preccurlyeq \bfy$ if and only if $x_i \leqslant y_i$ since entries in $\bfx$ and $\bfy$ are listed in a decreasing way; that is, $\preccurlyeq$ coincides with the usual ordering $\leqslant$ used in this paper. Consequently, $\leqslant$ is a quasi-well-order, and hence the poset $\Y$ cannot have an infinite anti-chain.
\end{proof}

\begin{remark}
In other words, the incidence algebra $k\Y$ is locally Noetherian, or the incidence algebra $k\N$ is $S_{\infty}$-symmetric Noetherian: if an $S_{\infty}$-symmetric $\N$-module is generated by its values on objects lying in only finitely many $S_{\infty}$-orbits and each such value is finite dimensional, then its $S_{\infty}$-symmetric submodules also have this property. Parallel results for $A = k[X_1, \, X_2, \, \ldots]$ were established in the literature, see for instances \cite{AH, LNNR}.
\end{remark}

Note that the sum norm on $\Ob(\N)$ induces the sum norm on $\Y$, which is compatible with the poset structure on $\Ob(\Y)$. One can also define homological degrees of $\Y$-modules with respect to this norm. For an $\N$-module, the finiteness of its zeroth and first homological degrees cannot guarantee the finiteness of the second homological degree, as shown in Example \ref{counterexample}. But for $\Y$-modules, we have a different situation.

\begin{theorem} \label{hds of repns of young lattice}
Let $k$ be a field, $\| \bullet \|$ the sum norm on $\Ob(\Y)$, and $V$ a $\Y$-module. If $\Y$ is presented in finite degrees, then $\hd_s(V)$ is finite for all $s \geqslant 1$. In particular, the category of $\Y$-modules presented in finite degrees is abelian.
\end{theorem}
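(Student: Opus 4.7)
The plan is to prove the theorem by induction on $s$, reducing it to a single key claim: every $\Y$-module $U$ with $\gd(U) = m < \infty$ satisfies $\hd_1(U) < \infty$ (and in fact $\hd_1(U) \leqslant m(\ln m + 1)$, which would recover the quantitative bound of Theorem \ref{main result 3}). Granting the key claim, a standard syzygy argument finishes: for $V$ presented in finite degrees, take a projective cover $P \twoheadrightarrow V$ with $\gd(P) = \gd(V)$, form $W = \ker(P \to V)$, observe $\gd(W) \leqslant \pd(V) < \infty$ and $\hd_s(V) = \hd_{s-1}(W)$, and iterate; the key claim applied at each stage of the resulting chain of syzygies yields $\hd_s(V) < \infty$ for every $s$. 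The abelianness of the category of $\Y$-modules presented in finite degrees then follows from the long exact sequence argument already used in Proposition \ref{bound hds}.

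For the key claim I would exploit the decisive combinatorial feature of $\Y$ in the sum norm: the set $S_0 = \{\bfx \in \Ob(\Y) \mid \|\bfx\| \leqslant m\}$ is \emph{finite}, since a partition of weight $\leqslant m$ has at most $m$ parts each of size $\leqslant m$. Choose a minimal projective cover $P = \bigoplus_{\bfx \in S_0} P(\bfx)^{c_{\bfx}} \twoheadrightarrow U$ (with possibly infinite multiplicities $c_{\bfx}$) and let $K$ be its kernel. Any element of $K_{\bfy}$ is a finitely supported tuple $(a_{\bfx})_{\bfx \in T}$ indexed by some $T \subseteq S_0 \cap \{\bfx \mid \bfx \leqslant \bfy\}$ whose image in $U_{\bfy}$ vanishes. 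Because $\Y$ is a lattice with coordinate-wise join, $\bfy_T := \bigvee_{\bfx \in T} \bfx$ lies in $\Ob(\Y)$ and satisfies $\bfy_T \leqslant \bfy$, so the same tuple already represents an element of $P_{\bfy_T}$; when $U$ is torsion-free this tuple must also map to $0$ in $U_{\bfy_T}$, so the relation descends and $\bfy$ can contribute to $H_0(K)$ only when $\bfy = \bfy_T$. Every such $\bfy_T$ is dominated by $\bigvee S_0$, whose $j$-th coordinate equals $\max_{\bfx \in S_0} x_j = \lfloor m/j \rfloor$ (realized by the partition with $j$ equal parts of size $\lfloor m/j \rfloor$), yielding
\[
\gd(K) \leqslant \|\bigvee S_0\| = \sum_{j=1}^{m} \lfloor m/j \rfloor \leqslant m \sum_{j=1}^{m} \tfrac{1}{j} \leqslant m(\ln m + 1),
\]
which settles the key claim for torsion-free $U$.

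The main obstacle is the torsion case: when $U$ has nontrivial torsion, the tuple $(a_{\bfx})_{\bfx \in T}$ may map to a nonzero torsion element of $U_{\bfy_T}$ that dies only at some $\bfy > \bfy_T$, producing genuinely new generators of $K$ strictly above $\bfy_T$. I plan to handle this via the filtration $0 \to U_T \to U \to U/U_T \to 0$: the torsion-free quotient $U/U_T$ has $\gd(U/U_T) \leqslant m$, so the join argument above applies directly; the long exact sequence in homology then forces $\gd(U_T) \leqslant \max\{\gd(U), \hd_1(U/U_T)\} < \infty$. It remains to bound $\hd_1(U_T)$ for a torsion $\Y$-module of finite generation degree, for which I would write $U_T$ as the filtered colimit of its finitely generated submodules, invoke Proposition \ref{antichain} (no infinite antichains in $\Y$, via Higman's lemma) to conclude that each such submodule is Noetherian and hence has finite $\hd_1$, and then control the colimit using the finiteness of $\supp(H_0(U_T))$ (which is contained in $S_0 \cap \{\bfx \mid \|\bfx\| \leqslant \gd(U_T)\}$, a finite set). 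Combining the torsion-free bound with this analysis closes the key claim and hence the theorem.
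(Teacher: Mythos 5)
Your torsion-free argument is the paper's proof: pick $P = \bigoplus_{\bfx\in S_0} P(\bfx)^{c_{\bfx}} \twoheadrightarrow U$, lift a relation in $K_{\bfy}$ to the join $\bfy_T = \bigvee_{\bfx\in T}\bfx$ using torsion-freeness, dominate every such join by $\bigvee S_0$, and compute $\| \bigvee S_0\| = \sum_{j=1}^m \lfloor m/j\rfloor \leqslant m(\ln m + 1)$ via the coordinate bound $jx_j\leqslant m$. This gives $\gd(K)$ finite, hence $\hd_1(U)$ finite, and recursively recovers both Theorem \ref{hds of repns of young lattice} and the quantitative Corollary.

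The problem is in how you set up the reduction. You state the key claim for an \emph{arbitrary} $\Y$-module $U$ with $\gd(U)<\infty$, prove it only when $U$ is torsion free, and then devote a paragraph to the torsion case as the ``main obstacle.'' But that obstacle never occurs: in the syzygy iteration each $W_i = \ker(P_i \to W_{i-1})$ is a submodule of a projective $\Y$-module, and since $\Y$ is a poset every morphism is a monomorphism, so representables $k\Y(\bfx,-)$ and hence all projectives are torsion free; thus every $W_i$ is torsion free. You should simply state and prove the key claim for torsion-free modules and delete the last paragraph — at which point your argument is exactly the paper's.

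If you did want to prove the key claim for general $U$, the plan in your last paragraph does not close. Writing $U_T$ as a filtered colimit of finitely generated submodules $U_i$ and invoking Proposition \ref{antichain} gives $\hd_1(U_i)<\infty$ for each $i$, and $H_1$ does commute with this colimit, but you have no control over $\sup_i\hd_1(U_i)$: Noetherianity of each $U_i$ says nothing uniform about where the relations live, and the finiteness of $\supp(H_0(U_T))$ by itself does not transfer to $\supp(H_1(U_T))$. So as written this is a genuine gap — fortunately one you can sidestep entirely by restricting the claim to torsion-free modules.
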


\begin{proof}
We only prove the first statement since the second one follows immediately. Take a short exact sequence $0 \to W \to P \to V \to 0$ such that $P$ is a projective $\Y$-module and $H_0(P) \cong H_0(V)$ (the reader can easily check that this is possible). As in the proof of Proposition \ref{bound hds}, it suffices to show that $\hd_2(V) = \hd_1(W)$ is finite. We give a proof similar to that of Lemma \ref{submodules of free module}.

Without loss of generality suppose that $W$ is nonzero and $\gd(W) = n$. Let $S = \supp (H_0(W))$, which is a nonempty finite set, and let $W(\bfx)$ be the submodule of $W$ generated by $W_{\bfx}$. Then
\[
W = \sum_{\bfx \in S} W(\bfx).
\]
Since $W(\bfx)$ is torsion free, it is actually a direct sum of $k\Y(\bfx, -)$. Thus we obtain a surjection
\[
Q = \bigoplus_{\bfx \in S} W(\bfx) \longrightarrow \sum_{\bfx \in S} W(\bfx) = W,
\]
and hence a short exact sequence $0 \to K \to Q \to W \to 0$, from which we deduce that $\hd_1(W) \leqslant \gd(K)$.

Now we investigate $K$. Fix $\bfz \in \Ob(\Y)$ such that $K_{\bfz} \neq 0$. Then $\bfz \geqslant \bfx$ for a certain $\bfx \in S$. Define
\[
T = S \cap \{ \bfx \in \Ob(\N) \mid \bfx \leqslant \bfz \}
\]
which is again a nonempty finite set. We have
\[
K_{\bfz} = \{(W_{\bfx \to \bfz} (w_{\bfx}))_{\bfx \in T} \mid w_{\bfx} \in W_{\bfx}, \, \sum_{\bfx \in T} W_{\bfx \to \bfz} (w_{\bfx}) = 0 \},
\]
where $\bfx \to \bfz$ is the morphism from $\bfx$ to $\bfz$.

Note that $\bfz$ is an upper bound of elements in $T$. Let $\tilde{\bfz}$ be the least upper bound of elements in $T$, which always exists since $T$ is a finite set. Then $\bfz \geqslant \tilde{\bfz}$. However, if $\bfz > \tilde{\bfz}$, then
\[
W_{\bfx \to \bfz} (w_{\bfx}) = W_{\tilde{\bfz} \to \bfz} (W_{\bfx \to \tilde{\bfz}} (w_{\bfx})).
\]
Note that the map $W_{\tilde{\bfz} \to \bfz}$ is injective since $W$ as a submodule of $Q$ is torsion free. Consequently, we have
\[
\sum_{\bfx \in T} W_{\bfx \to \tilde{\bfz}} (w_{\bfx}) = 0,
\]
so $(W_{\bfx \to \tilde{\bfz}} (w_{\bfx}))_{\bfx \in T}$ is contained $K_{\tilde{\bfz}}$. Furthermore, the map $K_{\tilde{\bfz} \to \bfz}$ sends $(W_{\bfx \to \tilde{\bfz}} (w_{\bfx}))_{\bfx \in T}$ to $(W_{\bfx \to \bfz} (w_{\bfx}))_{\bfx \in T}$. Consequently, the map $K_{\tilde{\bfz} \to \bfz}$ is surjective, and hence $\bfz$ is not contained in $\supp(H_0(K))$.

We have shown that $\bfz \in \supp(H_0(K))$ only if there is a nonempty finite set $T$ of $S$ such that $\bfz$ is precisely the least upper bound of elements in $T$. Since $S$ is a finite set, there are only finitely many such least upper bounds. Consequently, $\supp(H_0(K))$ is a finite set, so $\gd(K)$ is finite as well. This finishes the proof.
\end{proof}

\begin{remark}
By comparing the proof of this theorem and Example \ref{counterexample}, the reason why the above conclusion is not valid for $\N$-modules $V$ becomes transparent: in that case $\supp(H_0(V))$ in general is an infinite set, so it has infinitely many finite subsets and correspondingly one can get infinitely many least upper bounds.
\end{remark}

\begin{remark}
Note that for each fixed $n \in \mathbb{N}$, there are only finitely many objects $\bfx \in \Ob(\Y)$ such that the sum norm $\| \bfx \| = n$. Therefore, a $\Y$-module $V$ is presented in finite degrees if and only if it is presented in finitely many objects, an intrinsic property independent of norms on $\Ob(\Y)$.
\end{remark}

Finally we describe a recursive upper bound for homological degrees of $\Y$-modules.

\begin{corollary}
Let $k$ be a field, $\| \bullet \|$ the sum norm on $\Ob(\Y)$, and $V$ a $\Y$-module. If $s \geqslant 2$ and $\hd_{s-1}(V) > 0$, then one has
\[
\hd_s(V) \leqslant \hd_{s-1}(V) (\ln(\hd_{s-1}(V)) + 1).
\]
\end{corollary}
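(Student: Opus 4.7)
The plan is to reuse the argument in the proof of Theorem \ref{hds of repns of young lattice} and augment it with a sharp combinatorial estimate for sum norms of joins of partitions. Induction on $s$ seems to do the trick: the case $s=2$ carries the combinatorial bound, and for $s\geq 3$ a standard dimension shift reduces the statement back to the $s=2$ case.

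For the base case $s=2$, set $n=\hd_1(V)$ and take the short exact sequence $0 \to W \to P \to V \to 0$ of the proof of Theorem \ref{hds of repns of young lattice} (so $H_0(P)\cong H_0(V)$ and in particular $\gd(W)\leq n$). That proof shows $\hd_2(V)=\hd_1(W)\leq \gd(K)$, and more precisely that any $\bfz\in\supp(H_0(K))$ is the least upper bound $\tilde{\bfz}$ of some finite subset $T\subseteq \supp(H_0(W))$. So it suffices to bound $\|\tilde{\bfz}\|$ under the assumption that each element of $T$ has sum norm at most $n$.

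Since $\tilde{\bfz}\in\Ob(\Y)$ is a partition and $\tilde{z}_i=\max_{\bfx\in T}x_i$, while each $\bfx\in T$ is a weakly decreasing sequence of nonnegative integers summing to at most $n$, one has $x_i\leq n/i$ and $x_i=0$ for $i>n$. Thus $\tilde{z}_i\leq n/i$ and $\tilde{z}_i=0$ for $i>n$, giving
\[
\|\tilde{\bfz}\| \;\leq\; \sum_{i=1}^{n}\frac{n}{i} \;=\; n H_n \;\leq\; n(\ln n + 1),
\]
where $H_n$ denotes the $n$-th harmonic number and the final estimate comes from comparing $H_n$ with $1+\int_1^n dx/x$. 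This is precisely $\hd_1(V)(\ln(\hd_1(V))+1)$, settling the base case.

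For $s\geq 3$, I would induct. A short exact sequence $0\to W\to P\to V\to 0$ with $P$ projective and $H_0(P)\cong H_0(V)$ gives, from the long exact $\Tor$-sequence, $H_i(V)\cong H_{i-1}(W)$ for all $i\geq 2$. Hence $\hd_s(V)=\hd_{s-1}(W)$ and $\hd_{s-1}(V)=\hd_{s-2}(W)>0$, so the inductive hypothesis applies to $W$ at index $s-1\geq 2$ and yields
\[
\hd_s(V)=\hd_{s-1}(W)\leq \hd_{s-2}(W)\bigl(\ln(\hd_{s-2}(W))+1\bigr)=\hd_{s-1}(V)\bigl(\ln(\hd_{s-1}(V))+1\bigr).
\]
The only genuine input is the harmonic-sum estimate; the main (small) obstacle is verifying that this estimate is tight enough to absorb the passage through the least upper bound in the Young lattice, and keeping careful track of which short exact sequence is used at each step so that the shape of the right-hand side matches after the dimension shift.
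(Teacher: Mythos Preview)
Your argument is correct and follows essentially the same approach as the paper: the paper also reduces to $s=2$ (leaving the dimension-shift induction implicit, which you have spelled out), uses the same objects $W$ and $K$ from the proof of Theorem~\ref{hds of repns of young lattice}, and bounds $\gd(K)$ via the coordinatewise estimate $x_i\le n/i$ on partitions of norm at most $n$, concluding with the same harmonic-sum bound $nH_n\le n(\ln n+1)$.
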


Note that $\hd_{s-1}(V) = -1$ if and only if $H_{s-1}(V) = 0$, and in this case $\hd_s(V) = -1$. If $\hd_{s-1}(V) = 0$, then there is a projective resolution $\ldots \to P^1 \to V \to 0$ such that the image $V^{s-1}$ of $P^s \to P^{s-1}$ is a torsion free $\Y$-module generated by the value of $V^{s-1}$ on the object $\boldsymbol{0}$. But it follows that $V^{s-1}$ is projective, so $\hd_s(V) = -1$ as well.

\begin{proof}
It suffices to prove the inequality for $s = 2$. Let $W$ and $K$ be as in the proof of Theorem \ref{hds of repns of young lattice}. Since we have assumed that $H_0(P) \cong H_0(V)$, it follows that $\hd_1(V) = \gd(W)$. Without loss of generality we can assume this number to be $n$. Let $T$ be the subset of $\Ob(\Y)$ consisting of objects $\bfy$ with $\| \bfy \| = n$, and let $S = \supp(H_0(W))$. For every $\bfx \in S$, since $\| \bfx \| \leqslant n$, we can find a certain $\bfy \in T$ such that $\bfx \leqslant \bfy$. Consequently, the least upper bound $\bigvee_{\bfx \in S} \bfx$ of $S$ is less than or equal to $\bigvee_{\bfx \in T} \bfx$.

It is not hard to see from the proof of Theorem \ref{hds of repns of young lattice} that $\gd(K)$ is bounded by $\| \bigvee_{\bfx \in S} \bfx \|$, so
\[
\hd_2(V) \leqslant \gd(K) \leqslant \| \bigvee_{\bfx \in S} \bfx \| \leqslant \| \bigvee_{\bfx \in T} \bfx \|.
\]
But $T$ consists of all partitions of $[n]$ which have a common upper bound $(\lambda_1, \ldots, \lambda_n)$ with $\lambda_i = \lfloor n/i \rfloor \leqslant n/i$. Consequently, we have
\[
\hd_2(V) \leqslant \| \bigvee_{\bfx \in T} \bfx \| \leqslant \sum_{i=1}^n \frac{n}{i} = n \sum_{i=1}^n \frac{1}{n} \leqslant n(\ln n + 1).
\]
\end{proof}

\section{A sheaf theoretic interpretation}

In this section we use sheaf theory over ringed sites to give an interpretation of results established in previous sections from another viewpoint. Let $\mathcal{C}$ be a small category and $k$ a commutative ring. Recall that a presheaf of $k$-modules over $\mathcal{C}^{\op}$ is precisely a $\mathcal{C}$-module. Suppose that $\mathcal{C}$ satisfies the left Ore condition, so one can impose the atomic Grothendieck topology $J_{at}$ on $\mathcal{C}^{\op}$. Let $\boldsymbol{\mathcal{C}}^{\op}$ be the site $(\mathcal{C}^{\op}, \, J_{at})$, and let $\underline{k}$ be the constant structure sheaf. For details about atomic topology and sheaves over ringed sites, please refer to \cite{DLLX}.

The following result is taken from \cite{DLLX}.

\begin{theorem} \label{sheaves}
A $\mathcal{C}$-module $V$ is a sheaf over the ringed site $(\boldsymbol{\mathcal{C}}^{\op}, \underline{k})$ if and only if
\[
\Hom_{\mathcal{C} \Mod} (T, V) = 0 = \Ext_{\mathcal{C} \Mod}^1 (T, V)
\]
for any torsion $\mathcal{C}$-module $T$. The category $\Sh(\boldsymbol{\mathcal{C}}^{\op}, \, \underline{k})$ of sheaves of $k$-modules is equivalent to the Serre quotient $\mathcal{C} \Mod / \mathcal{C} \Mod^{\tor}$, where $\mathcal{C} \Mod^{\tor}$ is the full subcategory of $\mathcal{C} \Mod$ consisting of torsion $\C$-modules.
\end{theorem}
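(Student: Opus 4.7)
The plan is to translate the atomic sheaf axioms directly into the two vanishing conditions, and then invoke Gabriel's theorem on localizing subcategories to obtain the identification with the Serre quotient. First I would note that since $\mathcal{C}$ satisfies the left Ore condition, $\mathcal{C}^{\op}$ satisfies the right Ore condition, so the atomic topology $J_{at}$ is well-defined and its covering sieves on an object $x$ of $\mathcal{C}^{\op}$ are exactly the nonempty sieves; each such sieve contains a principal sub-sieve generated by a single morphism $f: x \to y$ in $\mathcal{C}$. The separatedness condition then reads: if $v \in V_x$ and $V_f(v) = 0$ for some $f: x \to y$, then $v = 0$. This is exactly the assertion that $V$ is torsion-free, which a direct argument shows is equivalent to $\Hom_{\mathcal{C}\Mod}(T, V) = 0$ for every torsion $\mathcal{C}$-module $T$, since the image of any such map is a torsion submodule of $V$ and hence must be zero.

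Now assuming $V$ is torsion-free, I would show that the gluing half of the sheaf axiom is equivalent to $\Ext^1_{\mathcal{C}\Mod}(T, V) = 0$ for every torsion $T$. By Yoneda, a compatible family for $V$ on a covering sieve $R$ of $x$ corresponds to a morphism $\phi: R \to V$ of $\mathcal{C}$-modules, where $R$ is viewed as a submodule of $P(x) = k\mathcal{C}(x, -)$, and an amalgamation is an extension of $\phi$ to $P(x)$. The quotient $T_R := P(x)/R$ is torsion: by the left Ore condition, for any $g: x \to z$ and any fixed $f: x \to y$ in $R$ one obtains $u, v$ with $vg = uf$, so multiplication by $v$ sends the class of $g$ to zero in $T_R$. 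The existence of an extension $P(x) \to V$ of $\phi$ is then equivalent to the splitting of the pushout sequence $0 \to V \to E \to T_R \to 0$, i.e.\ to the vanishing of its class in $\Ext^1_{\mathcal{C}\Mod}(T_R, V)$. A dévissage, expressing arbitrary torsion modules as filtered colimits of extensions of modules of the form $T_R$, then yields the equivalence of the two conditions.

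Finally, for the identification $\Sh(\boldsymbol{\mathcal{C}}^{\op}, \underline{k}) \simeq \mathcal{C}\Mod/\mathcal{C}\Mod^{\tor}$, I would verify that $\mathcal{C}\Mod^{\tor}$ is a localizing subcategory of the Grothendieck category $\mathcal{C}\Mod$: closure under subobjects, quotients, and arbitrary coproducts is formal, while closure under extensions uses the left Ore condition via a two-step killing argument (given $0 \to T' \to M \to T'' \to 0$ and $m \in M_x$, first kill the image in $T''$ by some $f$, then kill the resulting torsion element $V_f(m) \in T'_y$ by some $g$). Gabriel's theorem then identifies the Serre quotient with the full subcategory of objects $V$ satisfying $\Hom(T, V) = 0 = \Ext^1(T, V)$ for all $T \in \mathcal{C}\Mod^{\tor}$, which we have just characterized as the sheaves. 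The main obstacle is the pushout correspondence in the second paragraph: one must check that the bijection between compatible families modulo those coming from $V_x$ and $\Ext^1$-classes is well-defined, and that the dévissage indeed reduces arbitrary torsion $T$ to the principal case $T_R$; once these points are established, the remainder of the argument is essentially formal.
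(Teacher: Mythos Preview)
The paper does not actually prove this theorem: it is quoted verbatim from the reference \cite{DLLX} (see the sentence ``The following result is taken from \cite{DLLX}'' immediately preceding the statement). So there is no proof in the paper to compare against.

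Your sketch is nonetheless the standard and correct route to this result, and is almost certainly what \cite{DLLX} does. The identification of separatedness with torsion-freeness is exactly right, as is the observation that $P(x)/R$ is torsion for every nonempty sieve $R$ by the left Ore condition. The one place that deserves more care is the d\'evissage you flag yourself: rather than trying to write an arbitrary torsion module as a filtered colimit of extensions of the $T_R$, it is cleaner to argue on the level of torsion theories. The sieves $R \subseteq P(x)$ with $P(x)/R$ torsion form a Gabriel filter, and the associated hereditary torsion class is precisely $\mathcal{C}\Mod^{\tor}$ (this is where the left Ore condition is used, to check that the filter is indeed a topology and that its torsion class coincides with the naively defined torsion modules). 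Once this is in place, the identification of $J$-sheaves with the $\mathcal{T}$-closed objects and hence with the Gabriel quotient is the classical correspondence between Grothendieck topologies and localizing subcategories, exactly as you say.
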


When $\mathcal{C}$ is a poset satisfying the left Ore condition, we have a very trivial description for $\Sh(\boldsymbol{\mathcal{C}}^{\op}, \, \underline{k})$.

\begin{corollary} \label{classify sheaves}
Every sheaf over $(\boldsymbol{\mathcal{C}}^{\op}, \, \underline{k})$ is a constant sheaf, and hence $\Sh(\boldsymbol{\mathcal{C}}^{\op}, \, \underline{k}) \simeq k \Mod$.
\end{corollary}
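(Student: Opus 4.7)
The plan is to invoke Theorem \ref{sheaves} and show that for any sheaf $V$, every structure map $V_f\colon V_x \to V_y$ (for $f\colon x \to y$ in $\mathcal{C}$) is an isomorphism. Once this is established, the left Ore condition will coherently identify all values of $V$ with a single $k$-module $M$, realizing $V$ as the constant sheaf $\underline{M}$ and yielding the quasi-inverse equivalences $V \mapsto V_{x_0}$ and $M \mapsto \underline{M}$ between $\Sh(\boldsymbol{\mathcal{C}}^{\op}, \underline{k})$ and $k\Mod$.

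Injectivity of $V_f$ is immediate: since $\mathcal{C}$ satisfies left Ore, the torsion elements of $V$ form a submodule $V_T \subseteq V$ (as discussed before Remark \ref{torsion height}). The inclusion $V_T \hookrightarrow V$ is an element of $\Hom_{\mathcal{C}\Mod}(V_T, V)$, which vanishes by Theorem \ref{sheaves}. Hence $V_T = 0$, and every $V_f$ is injective.

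For surjectivity, consider the short exact sequence
\[
0 \longrightarrow P(y) \longrightarrow P(x) \longrightarrow Q \longrightarrow 0
\]
of $\mathcal{C}$-modules, where $P(z) = k\mathcal{C}(z, -)$ and the monomorphism corresponds by Yoneda to $f \in \mathcal{C}(x, y)$. Because $\mathcal{C}$ is a poset, $Q_z = k$ when $z \geqslant x$ and $z \not\geqslant y$, while $Q_z = 0$ otherwise. I claim $Q$ is torsion: for any such $z$, left Ore applied to $f\colon x \to y$ and $x \to z$ produces $w$ with $w \geqslant y$ and $w \geqslant z$, forcing $Q_w = 0$, so the structure map $Q_{z \to w}$ kills $Q_z$. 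By Theorem \ref{sheaves} we have $\Hom_{\mathcal{C}\Mod}(Q, V) = 0 = \Ext^1_{\mathcal{C}\Mod}(Q, V)$; together with projectivity of $P(x)$, the long exact sequence obtained from $\Hom_{\mathcal{C}\Mod}(-, V)$ produces an isomorphism $V_x = \Hom(P(x), V) \xrightarrow{\sim} \Hom(P(y), V) = V_y$, which is $V_f$ by naturality of Yoneda.

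Finally, fix a base object $x_0$ (in $\mathbb{N}^{\infty}$ one may take $\boldsymbol{0}$) and set $M = V_{x_0}$. For any $y \in \Ob(\mathcal{C})$, left Ore supplies a common upper bound $z$ of $x_0$ and $y$, and composing the now invertible structure maps gives $V_y \cong V_z \cong M$. A further application of left Ore to two choices $z, z'$ furnishes a common upper bound $z''$ dominating both, and a direct diagram chase shows that the resulting isomorphism $V_y \cong M$ is independent of $z$. These identifications are natural in $y$, yielding $V \cong \underline{M}$; conversely, $\underline{M}$ is easily checked to be a sheaf since any map out of a torsion module into identity structure maps must be zero. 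The main obstacle I anticipate is the coherence verification in this last step: one must ensure that two choices of common upper bound induce the same isomorphism $V_y \cong M$, which is precisely the role the left Ore condition is designed to play.
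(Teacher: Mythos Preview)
Your argument is correct in its main thrust but takes a genuinely different route from the paper. The paper never touches individual structure maps: instead it observes that for a directed poset the colimit functor $\varinjlim\colon \mathcal{C}\Mod \to k\Mod$ is exact with right adjoint the diagonal $\Delta$, identifies the kernel of $\varinjlim$ with the torsion modules, and then invokes Gabriel's criterion \cite[Proposition III.2.5]{Gab} together with Theorem~\ref{sheaves} to conclude $\Sh(\boldsymbol{\mathcal{C}}^{\op},\underline{k}) \simeq \mathcal{C}\Mod/\mathcal{C}\Mod^{\tor} \simeq k\Mod$, with $\Delta$ furnishing the quasi-inverse. Your approach is more elementary: you extract the isomorphism $V_f$ directly from the Yoneda sequence $0\to P(y)\to P(x)\to Q\to 0$ and the vanishing of $\Hom$ and $\Ext^1$ against the torsion cokernel, avoiding any appeal to localization theory. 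What the paper's method buys is brevity and a conceptual identification of the sheafification functor as $\Delta\circ\varinjlim$ (used again in Proposition~\ref{sheafification}); what yours buys is self-containment.

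One small gap: to close the equivalence you must know that every constant presheaf $\underline{M}$ is a sheaf, and your final sentence checks only the $\Hom$ condition of Theorem~\ref{sheaves}, not the $\Ext^1$ condition. This is easily repaired: given an extension $0\to\underline{M}\to E\to T\to 0$ with $T$ torsion, define a retraction $s_x\colon E_x\to M$ by sending $e\in E_x$ to $E_{x\to y}(e)\in M\subseteq E_y$ for any $y\geqslant x$ with $T_{x\to y}$ annihilating the image of $e$; directedness makes this well-defined and natural. Alternatively, this is exactly where the paper's adjunction $\varinjlim\dashv\Delta$ does the work for free.
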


\begin{proof}
An explicit equivalence is given as follows. Firstly, since $\mathcal{C}^{\op}$ is equipped with the atomic topology, $\mathcal{C}$ is a directed set. Therefore, the direct limit functor
\[
\varinjlim: \mathcal{C} \Mod \to k \Mod
\]
is exact. It has a right adjoint, the diagonal functor $\Delta: k \Mod \to \mathcal{C} \Mod$ sending each $k$-module to the corresponded constant $\mathcal{C}$-module. Furthermore, a $\mathcal{C}$-module $V$ is sent to 0 by $\varinjlim$ if and only if $V$ is torsion. Therefore, by Theorem \ref{sheaves} and \cite[Proposition III.2.5]{Gab}, we have
\[
\Sh(\boldsymbol{\mathcal{C}}^{\op}, \, \underline{k}) \simeq \mathcal{C} \Mod / \mathcal{C} \Mod^{\tor} \simeq k \Mod,
\]
and moreover each sheaf over $(\boldsymbol{\mathcal{C}}^{\op}, \, \underline{k})$ is an image of $\Delta$, so must be a constant sheaf.
\end{proof}

\subsection{$\BS^{\infty}$ as the sheafification functor}

Throughout this subsection let $\N$ be the poset $\mathbb{N}^{\infty}$. Recall that the inclusion functor $\textsl{inc}: \Sh(\boldsymbol{\N}^{\op}, \, \underline{k}) \to \N \Mod$ has a left adjoint $\sharp$, the \textit{sheafification functor}, where $\boldsymbol{\N}^{\op} = (\N^{\op}, J_{at})$. It follows from the proof of Corollary \ref{classify sheaves} that $\sharp$ is isomorphic to the following composite of functors
\[
\xymatrix{
\N \Mod \ar[r]^-{\varinjlim} & k \Mod \ar[r]^-{\Delta} & \Sh(\boldsymbol{\N}^{\op}, \, \underline{k}).
}
\]
Furthremore, by Proposition \ref{Sigma infinity} and Corollary \ref{classify sheaves}, the shift functor $\BS^{\infty}$ can be viewed as a functor from $\N \Mod$ to $\Sh(\boldsymbol{\N}^{\op}, \, \underline{k})$. Surprisingly, these two functors coincide up to isomorphism.

\begin{proposition} \label{sheafification}
The functor $\BS^{\infty}: \N \Mod \to \Sh(\boldsymbol{\N}^{\op}, \, \underline{k})$ is isomorphic to the sheafification functor.
\end{proposition}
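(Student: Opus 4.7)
The plan is to realise $\BS^{\infty}$ as a left adjoint of the inclusion $\textsl{inc}: \Sh(\boldsymbol{\N}^{\op}, \underline{k}) \hookrightarrow \N \Mod$, with unit given by the natural transformation $\boldsymbol{\psi}$. Since the sheafification is the essentially unique such left adjoint, this identifies the two functors up to canonical natural isomorphism.

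First I would verify that $\BS^{\infty} V$ lands in the essential image of $\textsl{inc}$. By Proposition \ref{Sigma infinity}, $\BS^{\infty} V$ is either zero or induced at $\boldsymbol{0}$, hence in either case is a constant $\N$-module; and by the proof of Corollary \ref{classify sheaves}, the constant $\N$-modules are precisely the sheaves over $(\boldsymbol{\N}^{\op}, \underline{k})$.

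The heart of the argument is to show that both the kernel and the cokernel of $\boldsymbol{\psi}_V: V \to \BS^{\infty} V$ are torsion. The kernel equals the torsion part $V_T$ by Remark \ref{cokernel is torsion}. For the cokernel $C$, I would use the description of $(\BS^{\infty} V)_{\bfx}$ as an iterated filtered colimit: unwinding $(\BS^{\infty} V)_{\bfx} = \varinjlim_s \varinjlim_n (\Sigma_{[n]} \BS^{s-1} V)_{\bfx}$, every element is represented by some $v \in V_{\bfy}$ for a suitable $\bfy \geq \bfx$, and the naturality of this colimit construction forces the image of this class in $(\BS^{\infty} V)_{\bfy}$ to coincide with $\boldsymbol{\psi}_V(v)$. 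Given $c \in C_{\bfx}$, lift it to $\tilde{c} \in (\BS^{\infty} V)_{\bfx}$, choose such $\bfy$ and $v$; then the image of $c$ in $C_{\bfy}$ is the class of $\boldsymbol{\psi}_V(v)$ modulo $\boldsymbol{\psi}_V(V_{\bfy})$, which is zero, witnessing that $c$ is a torsion element.

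Once these two facts are in hand, I would break the four-term exact sequence $0 \to V_T \to V \to \BS^{\infty} V \to C \to 0$ into two short exact sequences and apply $\Hom_{\N \Mod}(-, F)$ for an arbitrary sheaf $F$. Theorem \ref{sheaves} ensures that $\Hom(T, F) = 0 = \Ext^1(T, F)$ for every torsion $T$, which annihilates the contributions of $V_T$ and $C$ in the long exact sequences and yields a natural isomorphism $\Hom_{\N \Mod}(\BS^{\infty} V, F) \cong \Hom_{\N \Mod}(V, F)$ given by precomposition with $\boldsymbol{\psi}_V$. This realises the adjunction, and by uniqueness of left adjoints $\BS^{\infty}$ is canonically isomorphic to the sheafification functor. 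The main obstacle is the cokernel step, which demands unwinding the iterated colimit defining $\BS^{\infty}$ carefully enough to extract an explicit representative $v \in V_{\bfy}$ whose image under $\boldsymbol{\psi}_V$ matches $\tilde{c}$ after restriction along $\bfx \to \bfy$; the remaining manipulations with $\Hom$ and $\Ext^1$ are standard.
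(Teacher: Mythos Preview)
Your proposal is correct and follows essentially the same strategy as the paper: show that $\BS^{\infty}$ is left adjoint to the inclusion by establishing a natural isomorphism $\Hom(\BS^{\infty} V, W) \cong \Hom(V, W)$ for sheaves $W$, using that the kernel and cokernel of $\boldsymbol{\psi}_V$ are torsion together with Theorem \ref{sheaves}. The only tactical difference is that the paper first reduces to the torsion-free case via $\BS^{\infty} V \cong \BS^{\infty} V_F$ and then works with the three-term sequence $0 \to V \to \BS^{\infty} V \to U \to 0$, whereas you keep the full four-term sequence and split it in two; your version trades the reduction step for one extra application of the $\Hom/\Ext^1$ vanishing, and in return you actually sketch why the cokernel is torsion, a point the paper dispatches by citation to Remark \ref{cokernel is torsion}.
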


\begin{proof}
Take $V$ in $\N \Mod$ and $W$ in $\Sh(\boldsymbol{\N}^{\op}, \, \underline{k})$. We want to construct a natural isomorphism
\[
\Hom_{\N \Mod} (\BS^{\infty} V, \, W) \cong \Hom_{\N \Mod} (V, \, W);
\]
that is, $\BS^{\infty}$ is the left adjoint of $\textsl{inc}$, where on the right side we write $W$ for $\textsl{inc} \, W$ for brevity.

Applying $\BS^{\infty}$ to the short exact sequence $0 \to V_T \to V \to V_F \to 0$ we get $\BS^{\infty} V \cong \BS^{\infty} V_F$ by Proposition \ref{Sigma infinity}. Therefore, without loss of generality we can assume that $V$ is torsion free, so by Proposition \ref{Sigma infinity} we have another short exact sequence
\[
0 \to V \to \BS^{\infty} V \to U \to 0,
\]
which induces a long exact sequence
\[
0 \to \Hom_{\N \Mod} (U, \, W) \to \Hom_{\N \Mod} (\BS^{\infty} V, \, W) \to \Hom_{\N \Mod} (V, \, W) \to \Ext_{\N \Mod}^1 (U, \, W) \to \ldots.
\]
By Remark \ref{cokernel is torsion}, $U$ is a torsion $\N$-module. Since $W$ is a sheaf over $(\boldsymbol{\N}^{\op}, \, \underline{k})$, Theorem \ref{sheaves} asserts that the first term and the last term vanish. Consequently, we have
\[
\Hom_{\N \Mod} (\BS^{\infty} V, \, W) \cong \Hom_{\N \Mod} (V, \, W).
\]
Furthermore, all constructions to establish this isomorphism are natural, so the conclusion holds.
\end{proof}

\begin{remark}
We give a more illustrative explanation that $\BS^{\infty}$ indeed coincides with the sheafification functor. Since $\BS^{\infty} V$ is the colimit of the following sequence of $\N$-module homomorphisms
\[
V \to \BS V \to \BS^2 V \to \BS^3 V \to \ldots,
\]
$\Hom_{\N \Mod} (\BS^{\infty} V, \, W)$ is isomorphic to the set of sequences $(\phi^0, \, \phi^1, \, \phi^2, \, \ldots)$ of $\N$-module homomorphisms such that the following diagram commutes:
\[
\xymatrix{
V \ar[r] \ar[d]^-{\phi^0} & \BS V \ar[r] \ar[d]^-{\phi^1} & \BS^2 V \ar[r] \ar[d]^{\phi^2} & \ldots\\
W \ar[r]^-{\text{id}} & W \ar[r]^-{\text{id}} & W \ar[r]^-{\text{id}} & \ldots.
}
\]
Clearly, this sequence determines an element in $\Hom_{\N \Mod} (V, \, W)$. The above proposition asserts that the converse is also true: every element in $\Hom_{\N \Mod}(V, \, W)$ induces such a sequence if $W$ is a sheaf over $(\boldsymbol{\N}^{\op}, \underline{k})$, or equivalently a constant $\N$-module.

To see the reason, let us construct $\phi^1: \BS V \to W$ from $\phi^0: V \to W$. For $\bfx \in \Ob(\N)$, we need to define $\phi^1_{\bfx}: (\BS V)_{\bfx} \to W_{\bfx}$. Since $(\BS V)_{\bfx}$ is the colimit of the following sequence
\[
V_{\bfx} \longrightarrow (\Sigma_1 V)_{\bfx} = V_{\bfx + \bfo_1} \longrightarrow (\Sigma_{[2]} V)_{\bfx} = V_{\bfx + \bfo_1 + \bfo_2} \longrightarrow \ldots,
\]
it suffices to define vertical maps to obtain a commutative diagram of $k$-modules
\[
\xymatrix{
V_{\bfx} \ar[r] \ar[d] &  V_{\bfx + \bfo_1} \ar[r] \ar[d] & V_{\bfx + \bfo_1 + \bfo_2} \ar[r] \ar[d] & \ldots\\
W_{\bfx} \ar[r] & W_{\bfx} \ar[r] & W_{\bfx} \ar[r] & \ldots.
}
\]
Since $\phi^0$ gives a commutative diagram
\[
\xymatrix{
V_{\bfx} \ar[r] \ar[d]^-{\phi^0_{\bfx}} &  V_{\bfx + \bfo_1} \ar[r] \ar[d]^-{\phi^0_{\bfx + \bfo_1}} & V_{\bfx + \bfo_1 + \bfo_2} \ar[r] \ar[d]^-{\phi^0_{\bfx + \bfo_1 + \bfo_2}} & \ldots\\
W_{\bfx} \ar[r] & W_{\bfx + \bfo_1} \ar[r] & W_{\bfx + \bfo_1 + \bfo_2} \ar[r] & \ldots,
}
\]
and $W$ is a constant $\N$-module, we can successfully define vertical maps in the first diagram.
\end{remark}

\begin{remark}
Given a torsion free $\N$-module $V$ (in other words, a separated presheaf over $(\boldsymbol{\N}^{\op}, \underline{k})$), a standard fact of sheaf theory tells us that the natural map $V \to V^{\sharp} \cong \BS^{\infty} V$ is injective, which is precisely the second statement of Proposition \ref{Sigma infinity}.
\end{remark}

\subsection{Sheaves over the ringed site $(\mathscr{Z}, \, J_{at}, \, \underline{k})$}

In this subsection let $\mathscr{Z}$ be the category defined in Example \ref{orbit cat of Z}. Since $\mathscr{Z}$ satisfies the right Ore condition, we can impose the atomic Grothendieck topology on $\mathscr{Z}$, and consider sheaves over the ringed site $(\mathscr{Z}, \, J_{at}, \, \underline{k})$. Throughout this subsection let $k$ be an algebraically closed field with characteristic 0.

Recall that every object in $\mathscr{Z}$ is the coset $\bfn = \mathbb{Z}/n\mathbb{Z}$, and morphisms $f: \bfm \to \bfn$ are surjective $(\mathbb{Z}, +)$-equivariant maps. In particular, endomorphisms from $\bfn$ to itself form the cyclic group $G_n = \langle g \mid g^n = 1 \rangle$. If $V$ is a presheaf of $k$-modules over $\mathscr{Z}$, or equivalently, an $\mathscr{Z}^{\op}$-module, since the canonical morphism $f: \bfm \to \bfn$ in $\mathscr{Z}(\bfm, \bfn)$ sending $\bar{1} \in \bfm$ to $\bar{1} \in \bfn$ induces an embedding functor $kG_n \Mod \to kG_m \Mod$, one can regard $V_f: V_n \to V_m$ as a $kG_m$-module homomorphism. In particular, let $\xi$ be a $n$-th primitive root of 1, then the corresponded irreducible $kG_n$-module $L_{\xi}$ can be regarded as an irreducible $kG_m$-module while $n$ divides $m$. Denote by $\underline{L}_{\xi}$ the $\mathscr{Z}^{\op}$-submodule (actually, an irreducible direct summand) of $k\mathscr{Z}(-, \bfn)$ generated by $L_{\xi}$ on $\bfn$. Explicitly, its value on $\bfm$ is $L_{\xi}$ if $n$ divides $m$ and $0$ otherwise.

The following result classifies all indecomposable injective objects in $\Sh(\mathscr{Z}, \, J_{at}, \, \underline{k})$, which are precisely indecomposable torsion free $\mathscr{Z}^{\op}$-modules by \cite[Corollary 3.9]{DLLX}.

\begin{theorem}
Let $k$ be an algebraically closed field with characteristic 0. Then $\xi \mapsto \underline{L}_{\xi}$ gives a bijective correspondence between the set of all primitive roots of 1 and the set of isomorphism classes of indecomposable injective torsion free $\mathscr{Z}^{\op}$-modules.
\end{theorem}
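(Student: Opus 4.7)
The plan is to exploit the fact that each endomorphism group $\mathscr{Z}(\bfn,\bfn)=G_n=\mathbb{Z}/n\mathbb{Z}$ is cyclic and $kG_n$ is semisimple ($k$ has characteristic $0$, $G_n$ is finite), so the simple $kG_n$-modules are the one-dimensional modules $L_{n,\xi}$ indexed by $n$-th roots of unity $\xi\in k$. I will first verify the listed properties of $\underline{L}_\xi$, then classify arbitrary indecomposable injective torsion-free $\mathscr{Z}^{\op}$-modules via an isotypic decomposition along these $G_n$-actions.

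For the first direction, the module $\underline{L}_\xi$ is torsion-free as a direct summand of the representable $k\mathscr{Z}(-,\bfn)$, which is torsion-free since morphisms in $\mathscr{Z}^{\op}$ are monomorphisms (Lemma \ref{facts}(1) dualized); its value at $\bfm$ is either $L_\xi$ (when $n\mid m$) or zero. Indecomposability will follow from these at-most-one-dimensional values: a direct-sum decomposition $\underline{L}_\xi=A\oplus B$ would split the support $\{\bfm:n\mid m\}$ into two pieces, but for any $\bfm_1$ supporting $A$ and $\bfm_2$ supporting $B$ the transition maps into $\mathrm{lcm}(\bfm_1,\bfm_2)$ are injective (by torsion-freeness of $\underline{L}_\xi$) and land in a one-dimensional space, giving a contradiction. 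Finally, different primitive roots yield non-isomorphic modules: different primitive orders $n\neq n'$ give different supports, while for the same $n$ the $G_n$-action on $\underline{L}_\xi(\bfn)$ separates the characters.

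For the converse, let $V$ be an indecomposable injective torsion-free $\mathscr{Z}^{\op}$-module. Each $V_\bfm$ is a $kG_m$-module and decomposes into isotypes $V_{\bfm,\chi}$ over characters $\chi$ of $G_m$. A direct computation using the canonical morphism $\bfm'\to\bfm$ in $\mathscr{Z}$ (for $m\mid m'$) shows that the induced transition $V_\bfm\to V_{\bfm'}$ is $G_{m'}$-equivariant with $V_\bfm$ pulled back along $G_{m'}\twoheadrightarrow G_m$. Under the natural inclusion of the character group of $G_m$ into that of $G_{m'}$ (identifying each with roots of unity in $k$), this map sends $\chi$-isotype to $\chi$-isotype. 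Consequently $V=\bigoplus_\xi V^{(\xi)}$ as a $\mathscr{Z}^{\op}$-module, the sum ranging over roots of unity $\xi\in k$, and indecomposability of $V$ pins $V$ to a single isotype $V=V^{(\xi)}$ for an $\xi$ whose primitive order $n$ is uniquely determined. Then any nonzero vector in $V_\bfn$ generates a sub-$\mathscr{Z}^{\op}$-module isomorphic to $\underline{L}_\xi$, and since $\underline{L}_\xi$ is itself injective in the sheaf category, this embedding splits; indecomposability of $V$ forces $V\cong\underline{L}_\xi$.

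The principal obstacle is twofold and concentrated in the last step: (i) verifying that $\underline{L}_\xi$ is actually injective in $\Sh(\mathscr{Z},J_{at},\underline{k})$, and (ii) ruling out the case $V_\bfn=0$ (in which $V=V^{(\xi)}$ is indecomposable torsion-free but cannot be an injective sheaf). For (ii) one constructs a non-split embedding of $V$ into an enlargement whose value at $\bfn$ is nonzero — this is analogous to the embedding of a ``delayed'' copy into $\underline{L}_\xi$ itself, with the obstruction to splitting coming from the fact that any candidate complement would have to be incompatible with the canonical transitions. For (i) the cleanest route is to invoke the semisimplicity of $\Sh(\mathscr{Z},J_{at},\underline{k})$ from Theorem \ref{main result 4}; a more self-contained route is to mimic the analysis of $\BS^{\infty}$ from Section 3, constructing an analogous sheafification functor for $\mathscr{Z}^{\op}$ and showing that $\underline{L}_\xi$ is its own sheafification, which combined with Theorem \ref{sheaves} yields the vanishing of $\mathrm{Ext}^1_{\Sh}(-,\underline{L}_\xi)$ against torsion modules and hence the required injectivity.
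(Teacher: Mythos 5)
There is a genuine gap in your argument: the injectivity of $\underline{L}_{\xi}$ in $\mathscr{Z}^{\op}\Mod$, which you yourself flag as the ``principal obstacle,'' is not actually established by either of the routes you propose. Your first route, invoking the semisimplicity of $\Sh(\mathscr{Z}, J_{at}, \underline{k})$ from Theorem \ref{main result 4}, is circular: that theorem is proved \emph{from} the present one (semisimplicity appears as a corollary immediately after the classification of indecomposable injectives). Your second route is a non sequitur: showing that $\underline{L}_{\xi}$ is its own sheafification, equivalently that it is a sheaf, gives, via Theorem \ref{sheaves}, only that $\Hom_{\mathscr{Z}^{\op}\Mod}(T, \underline{L}_{\xi}) = 0 = \Ext^1_{\mathscr{Z}^{\op}\Mod}(T, \underline{L}_{\xi})$ for \emph{torsion} $T$. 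Injectivity of $\underline{L}_{\xi}$ in $\mathscr{Z}^{\op}\Mod$ requires $\Ext^1(V, \underline{L}_{\xi})=0$ for arbitrary $V$, which is a much stronger condition; being a sheaf is necessary but far from sufficient. The paper's proof of this step is the real content: it exhibits $\Hom_{\mathscr{Z}^{\op}\Mod}(-, \underline{L}_{\xi})$ as a composite of exact functors by first passing to the $L_{\xi}$-isotypic submodule $V'$ (as you also do), then restricting to the full subcategory $\mathscr{Z}^{\op}_{\geqslant \bfn}$, then descending along a quotient $q: \mathscr{Z}^{\op}_{\geqslant \bfn} \to \mathscr{E}$ which collapses all automorphism groups to $G_n$, and finally identifying $\Sh(\mathscr{E}^{\op}, J_{at}, \underline{k}) \simeq kG_n\Mod$ by a colimit argument in the style of Corollary \ref{classify sheaves}, so that injectivity reduces to that of $L_{\xi}$ as a $kG_n$-module. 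None of this is replaceable by the sheafification observation alone.

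Your handling of point (ii) --- ruling out $V_{\bfn}=0$ where $n$ is the primitive order of the single isotype $\xi$ --- is also too vague to carry the argument (the ``delayed copy'' sketch does not explain what enlargement is being used or why a complement must fail). The paper sidesteps this issue more cleanly: rather than fixing $\xi$ first and then needing $V_\bfn \neq 0$, it takes an object $\bfn$ \emph{minimal in the support of $V$} (hence $V_\bfn \neq 0$ by construction), picks a simple $kG_n$-summand $L$ of $V_\bfn$, shows the submodule $\underline{L}$ it generates is a sheaf (via a snake-lemma argument showing $V/\underline{L}$ is torsion free), and then deduces that the corresponding root of unity must in fact be primitive of order $n$ by comparing $\underline{L}$ with $\underline{L}_{\xi}$ inside a short exact sequence and using torsion-freeness of the cokernel. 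Your isotypic decomposition $V = \bigoplus_\xi V^{(\xi)}$ and the indecomposability argument pinning a single $\xi$ are a nice reorganization and essentially sound, but you would be better served following the paper's ``minimal support object'' pattern, which makes the nonvanishing automatic and lets primitivity be deduced rather than assumed.
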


\begin{proof}
Suppose that $\xi$ is a primitive $n$-th root of 1. Clearly, $\underline{L}_{\xi}$ is torsion free and indecomposable. We show that it is injective; that is, the functor $\Hom_{\mathscr{Z}^{\op} \Mod} (-, \, \underline{L}_{\xi})$ is exact. Given an $\mathscr{Z}^{\op}$-module $V$, let $V'$ be the submodule satisfying the following condition: for every $\bfm \in \Ob(\mathscr{Z}^{\op})$, $V'_{\bfm}$ is a direct sum of all copies of $L_{\xi}$ appearing in $V_{\bfm}$. This submodule is well defined by Schur's lemma. Moreover, by Schur's lemma again, one has a decomposition $V = V' \oplus V''$. Clearly, one has
\[
\Hom_{\mathscr{Z}^{\op} \Mod} (V, \, \underline{L}_{\xi}) \cong \Hom_{\mathscr{Z}^{\op} \Mod} (V', \, \underline{L}_{\xi}).
\]
Since for each object $\bfm$ such that $n$ does not divide $m$, $L_{\xi}$ cannot appear in $V_{\bfm}$, we can regard $V'$ and $\underline{L}_{\xi}$ as an $\mathscr{Z}_{\geqslant \bfn}^{\op}$-module, where $\mathscr{Z}_{\geqslant \bfn}^{\op}$ is the full subcategory of $\mathscr{Z}^{\op}$ consisting of all objects $\bfm$ such that $n$ divides $m$. Clearly, one has
\[
\Hom_{\mathscr{Z}^{\op} \Mod} (V', \, \underline{L}_{\xi}) \cong \Hom_{\mathscr{Z}_{\geqslant \bfn}^{\op} \Mod} (V', \, \underline{L}_{\xi}).
\]

Note that $L_{\xi}$ is viewed as a $kG_m$-module via the natural quotient group homomorphism $G_m \to G_n$ when $n$ divides $m$. Let $\mathscr{E}$ be the following category:
\begin{itemize}
\item $\Ob(\mathscr{E}) = \Ob(\mathscr{Z}^{\op}_{\geqslant \bfn})$;

\item for $\bfm \in \Ob(\mathscr{E})$, set $H_{\bfm} = \mathscr{E}(\bfm, \, \bfm)$ be the unique quotient group of $G_m$ which is isomorphic to $G_n$;

\item for $\bfm, \, \bfx \in \Ob(\mathscr{E})$, $\mathscr{E} (\bfm, \, \bfx)$ are isomorphisms between $H_{\bfm}$ and $H_{\bfx}$.
\end{itemize}
Then there is a natural quotient functor $q: \mathscr{Z}_{\geqslant \bfn}^{\op} \to \mathscr{E}$ which induces a fully faithful functor $q^{\ast}: \mathscr{E} \Mod \to \mathscr{Z}_{\geqslant \bfn}^{\op} \Mod$, and one can view $V'$ and $\underline{L}_{\xi}$ as $\mathscr{E}$-modules. Furthermore, one has
\[
\Hom_{\mathscr{Z}_{\geqslant \bfn}^{\op} \Mod} (V', \, \underline{L}_{\xi}) \cong \Hom_{\mathscr{E} \Mod} (V', \, \underline{L}_{\xi}).
\]

We observe that each step in the following procedure
\[
V \in \mathscr{Z}^{\op} \Mod \mapsto V' \in \mathscr{Z}^{\op} \Mod \mapsto V' \in \mathscr{Z}_{\geqslant \bfn}^{\op} \Mod \mapsto V' \in \mathscr{E} \Mod
\]
is exact. Therefore, to show that $\underline{L}_{\xi}$ is injective in $\mathscr{Z}^{\op} \Mod$, it suffices to show that $\underline{L}_{\xi}$ is injective in $\mathscr{E} \Mod$. However, using the same argument as in the proof of Corollary \ref{classify sheaves}, one can show that
\[
\Sh(\mathcal{E}^{\op}, \, J_{at}, \, \underline{k}) \simeq kG_n \Mod.
\]
In particular, $\underline{L}_{\xi}$ is an injective sheaf since it corresponds to the $kG_n$-module $L_{\xi}$ which is injective in $kG_n \Mod$. Consequently, it is a torsion free injective $\mathcal{E}$-module by \cite[Corollary 3.9]{DLLX}.

We have show that every $\underline{L}_{\xi}$ is a torsion free injective $\mathscr{Z}^{\op}$-module. Now we prove the converse statement; that is, every indecomposable torsion free injective $\mathscr{Z}^{\op}$-module $I$ is isomorphic to $\underline{L}_{\xi}$ for a certain primitive root $\xi$ of 1. Take a minimal object $\bfn$ in $\mathscr{Z}^{\op}$ such that $I_{\bfn} \neq 0$, and let $\underline{L}$ be the submodule of $I$ generated by a simple summand $L$ of $I_{\bfn}$. For any $f: \bfx \to \bfy$ in $\mathscr{Z}^{\op}$, applying the snake lemma to the commutative diagram of short exact sequences
\[
\xymatrix{
0 \ar[r] & \underline{L}_{\bfx} \ar[r] \ar[d]^-{\underline{L}(f)} & I_{\bfx} \ar[r] \ar[d]^-{I(f)} & (I/\underline{L})_{\bfx} \ar[r] \ar[d]^-{(I/\underline{L})(f)} & 0\\
0 \ar[r] & \underline{L}_{\bfy} \ar[r] & I_{\bfy} \ar[r] & (I/\underline{L})_{\bfy} \ar[r] & 0
}
\]
we know that the third vertical map is injective since the first one is an isomorphism and the second one is injective. Consequently, $I/\underline{L}$ is a torsion free $\mathscr{Z}^{\op}$-module, so $\underline{L}$ is a sheaf over the ringed site $(\mathscr{Z}^{\op}, \, J_{at}, \, \underline{k})$ by \cite[Proposition 3.7 and Theorem 3.8]{DLLX}.

Note that $L$ corresponds to a unique $n$-th root $\xi$ of 1 which is an $m$-th primitive root for a unique $m$ dividing $n$. Letting $\underline{L}_{\xi}$ be the injective torsion free module corresponded to $\xi$ constructed as before, we obtain a short exact sequence $0 \to \underline{L} \to \underline{L}_{\xi} \to \underline{L}_{\xi}/\underline{L} \to 0$. By \cite[Proposition 3.7 and Theorem 3.8]{DLLX}, the third term must be torsion free. However, since its value on $\bfn$ is 0, this happens if and only if $m = n$; that is, $\underline{L} \cong \underline{L}_{\xi}$. Since $I$ is an indecomposable $\mathscr{Z}^{\op}$-module containing $\underline{L}$ as a submodule, we must have $I \cong \underline{L}_{\xi}$.
\end{proof}

The above proof actually implies that indecomposable injective objects in $\Sh(\mathscr{Z}, \, J_{at}, \, \underline{k})$ coincide with irreducible objects. Consequently, $\Sh(\mathscr{Z}, \, J_{at}, \, \underline{k})$, is a semisimple category, as asserted in the following corollary.

\begin{corollary}
Let $k$ be an algebraically closed field with characteristic 0. Then every object in $\Sh(\mathscr{Z}, \, J_{at}, \, \underline{k})$ is isomorphic to a direct sum of irreducible sheaves of the form $\underline{L}_{\xi}$. Consequently,
\[
\Sh(\mathscr{Z}, \, J_{at}, \, \underline{k}) \simeq \bigtimes_{\xi} k \Mod,
\]
the cartesian product of categories $k \Mod$ indexed by all primitive roots of 1.
\end{corollary}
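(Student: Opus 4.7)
The plan is to upgrade the conclusion of the previous theorem from \emph{indecomposable injective} to \emph{simple}, then use Maschke's theorem to produce an isotypic decomposition, and finally identify each isotypic component as a direct sum of copies of a single $\underline{L}_\xi$. First I would verify that each $\underline{L}_\xi$ is simple as an object of $\Sh(\mathscr{Z}, J_{at}, \underline{k})$. For $\xi$ a primitive $n$-th root of unity, the value $(\underline{L}_\xi)_\bfm$ is one-dimensional when $n \mid m$ and zero otherwise, and the structure maps between nonzero values are isomorphisms; so any nonzero sub-$\mathscr{Z}^{\op}$-module $W$ must agree with $\underline{L}_\xi$ on all sufficiently divisible $\bfm$, forcing $\underline{L}_\xi/W$ to be torsion and $W \cong \underline{L}_\xi$ in the sheaf category.

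Next, I would produce the isotypic decomposition of an arbitrary sheaf $V$. For each primitive root $\xi$ of order $n$, set $(V_\xi)_\bfm$ to be the $L_\xi$-isotypic component of $V_\bfm$ as a $kG_m$-module (via the quotient $G_m \twoheadrightarrow G_n$) if $n \mid m$, and $0$ otherwise. Every structure map $V_f \colon V_\bfn \to V_\bfm$ is $kG_m$-equivariant, so it carries $L_\xi$-isotypic parts to $L_\xi$-isotypic parts, making $V_\xi$ a sub-$\mathscr{Z}^{\op}$-module of $V$. Maschke's theorem in characteristic zero gives the slice-wise decomposition $V_\bfm = \bigoplus_\xi (V_\xi)_\bfm$, hence $V = \bigoplus_\xi V_\xi$.

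To upgrade ``slice-wise $L_\xi$-isotypic'' to ``literally a direct sum of copies of $\underline{L}_\xi$'', I would reuse the localization machinery from the preceding proof. The subsheaf $V_\xi$ is supported on $\mathscr{Z}_{\geqslant \bfn}^{\op}$, and its $G_m$-actions all factor through the quotients $G_m \twoheadrightarrow G_n$, so $V_\xi$ descends along $q \colon \mathscr{Z}_{\geqslant \bfn}^{\op} \to \mathscr{E}$. Under the equivalence $\Sh(\mathscr{E}^{\op}, J_{at}, \underline{k}) \simeq k G_n \Mod$ established in the previous proof, $V_\xi$ corresponds to a $kG_n$-module on which the generator acts by the scalar $\xi$, which is therefore a direct sum of copies of $L_\xi$. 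Translating back yields $V_\xi \cong \underline{L}_\xi^{(I_\xi)}$ for some index set $I_\xi$.

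Finally, Schur's lemma gives $\End(\underline{L}_\xi) \cong k$ and $\Hom(\underline{L}_\xi, \underline{L}_\zeta) = 0$ for $\xi \neq \zeta$, so the functor $V \mapsto (\Hom(\underline{L}_\xi, V))_\xi$ is the claimed equivalence $\Sh(\mathscr{Z}, J_{at}, \underline{k}) \simeq \bigtimes_\xi k\Mod$. The main obstacle is the descent step: the pointwise $L_\xi$-isotypic decomposition is not \emph{a priori} globally a direct sum of simples, since the structure maps inside $V_\xi$ could be complicated endomorphisms of multiplicity spaces with nontrivial compatibilities; the reduction to the skeletal groupoid $\mathscr{E}$ is precisely what collapses these compatibilities and turns the slice-wise decomposition into a global one.
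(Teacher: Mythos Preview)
Your overall strategy---decompose $V$ into isotypic components $V_\xi$ and then identify each $V_\xi$ with a direct sum of copies of $\underline{L}_\xi$---is correct and genuinely different from the paper's route. The paper instead builds an induced module $P = k\mathscr{Z}^{\op} \otimes_{k\mathscr{Z}_0} H_0(V)$ surjecting onto $V$, compares $P$ with a direct sum $\tilde P$ of $\underline{L}_\xi$'s via a map with torsion cokernel, and uses the sheaf property of $V$ together with an $H_0$-argument to force $P \cong \tilde P$; semisimplicity of $V$ then follows because it is a quotient of a semisimple object. Your isotypic-decomposition approach is more direct and conceptually cleaner, and it reuses the machinery of the previous theorem more efficiently.

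There is, however, a genuine gap in your descent step. The category $\mathscr{E}$ from the preceding proof is \emph{not} a groupoid: one has $\mathscr{E}(\bfm,\bfx)\neq\emptyset$ only when $m\mid x$, so non-endomorphisms are not invertible. Consequently it is \emph{not} automatic that an $\mathscr{E}$-module is determined by its value at a single object, and there exist $\mathscr{E}$-modules with $L_\xi$-isotypic values whose structure maps are not isomorphisms (e.g.\ the torsion module with $L_\xi$ at $\bfn$ and zero elsewhere). Your invocation of $\Sh(\mathscr{E}^{\op},J_{at},\underline{k})\simeq kG_n\Mod$ therefore requires you first to check that the descended $V_\xi$ is a \emph{sheaf} on $\mathscr{E}^{\op}$, not merely a presheaf; you have not done this, and the ``groupoid'' remark suggests you thought it unnecessary.

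The fix is short: use the sheaf condition for $V$ on $\mathscr{Z}$ directly. For $n\mid m$ the atomic-sheaf condition identifies $V_{\bfn}$ with the $\ker(G_m\to G_n)$-invariants in $V_{\bfm}$. Since this kernel acts trivially on the $L_\xi$-isotypic part, the restriction $(V_\xi)_{\bfn}\to (V_\xi)_{\bfm}$ is an isomorphism for every structure map. Once all structure maps of $V_\xi$ are isomorphisms, evaluation at $\bfn$ gives $\Hom_{\mathscr{Z}^{\op}}(\underline{L}_\xi, V_\xi)\cong\Hom_{kG_n}(L_\xi,(V_\xi)_{\bfn})$ and the isomorphism $V_\xi\cong\underline{L}_\xi^{(I_\xi)}$ follows as you intended. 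With this correction your argument is complete and arguably simpler than the paper's.
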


\begin{proof}
Let $V$ be an object in $\Sh(\mathscr{Z}, \, J_{at}, \, \underline{k})$. We can obtain a short exact sequence
\[
0 \to W \to P = k\mathscr{Z}^{\op} \otimes_{k\mathscr{Z}_0} H_0(V) \to V \to 0
\]
in $\mathscr{Z}^{\op} \Mod$ where $\mathscr{Z}_0$ is the subcategory of $\mathscr{Z}^{\op}$ consisting of all objects and all invertible morphisms in $\mathscr{Z}^{\op}$. We claim that $P$ is a direct sum of torsion free injective $\mathscr{Z}^{\op}$-modules. If this is true, then the Grothendieck category $\Sh(\mathscr{Z}, \, J_{at}, \, \underline{k})$ has a set of irreducible generators
\[
\{ \underline{L}_{\xi} \mid \xi \text{ is an $n$-th primitive root for a certain } n \in \mathbb{N}_+ \}.
\]
The conclusions then follow from this fact. Indeed, in this case $P$ is a semismple object in $\Sh(\mathscr{Z}, \, J_{at}, \, \underline{k})$. Since a simple summand in $P$ becomes either 0 or a simple subobject of $V$ via the quotient map $P \to V$, and $V$ is the sum of these simple subobjects, it follows that $V$ is semisimple by \cite[Theorem 2.3]{PS}.

Now we prove the claim. Note that every indecomposable direct summand $\underline{L}$ of $P$ is generated by a certain $L_{\xi}$ in $kG_n \Mod$ with $\xi$ an $n$-th root of 1, so $L_{\xi}$ can be embedded into $\underline{L}_{\xi}$ constructed before, and hence we obtain a commutative diagram
\[
\xymatrix{
0 \ar[r] & W \ar[r] \ar@{=}[d] & P \ar[r] \ar[d]^-{\beta}         & V \ar[r] \ar[d]   & 0\\
0 \ar[r] & W \ar[r]            & \tilde{P} \ar[d] \ar[r] & \tilde{V} \ar[r] \ar[d] & 0\\
         &                     & T \ar@{=}[r]            & T
}
\]
where $\tilde{P}$ is a direct sum of those $\underline{L}_{\xi}$ and $\beta$ is the map induced by these embeddings. Clearly, $T$ is a torsion $\mathscr{Z}^{\op}$-module (including the special case that $T = 0$). Since $V$ is a sheaf, by Theorem \ref{sheaves}, the third column must split. Applying the functor $H_0$ we obtain a commutative diagram
\[
\xymatrix{
 & H_0(P) \ar[r]^-{\alpha} \ar@{=}[d] & H_0(\tilde{P}) \ar[r] \ar[d] & H_0(T) \ar[r] \ar@{=}[d] & 0\\
0 \ar[r] & H_0(V) \ar[r] & H_0(V) \oplus H_0(T) \ar[r] & H_0(T) \ar[r] & 0.
}
\]
It turns out that the middle vertical arrow is also an isomorphism, and hence the map $\alpha$ is a split monomorphism. It induces a split monomorphism
\[
P = k\mathscr{Z}^{\op} \otimes_{k\mathscr{Z}_0} H_0(V) \to \tilde{P} = k\mathscr{Z}^{\op} \otimes_{k\mathscr{Z}_0} H_0(\tilde{P}).
\]
But by our construction of $\tilde{P}$, this happens if and only if $P \cong \tilde{P}$, so the claim is valid.
\end{proof}

\begin{remark}
Note that $k_{\xi}$ is isomorphic to the endomorphism ring of $\underline{L}_{\xi}$, so $\underline{L}_{\xi}$ is quipped with a $(k\mathscr{Z}^{\op}, \, k_{\xi})$-bimodule structure. With this observation, the equivalence is given by the functors:
\[
\xymatrix{
\bigoplus_{\xi} \Hom_{\mathscr{Z}^{\op} \Mod} (\underline{L}_{\xi}, \, -): \Sh(\mathscr{Z}, \, J_{at}, \, \underline{k}) \ar@<.5px>@^{->}[r] & \bigtimes_{\xi} (k_{\xi} \Mod) \simeq (\bigoplus_{\xi} k_{\xi}) \Mod: \bigoplus_{\xi} (\underline{L}_{\xi} \otimes_{k_{\xi}} -) \ar@<.5ex>@^{->}[l]
}
\]
which both are exact and form an adjunction pair. Denote by $\mu$ the hom functor and $\nu$ the tensor functor. Clearly, $\nu \mu$ is isomorphic to the identity functor. Furthermore, since those $\underline{L}_{\xi}$'s form a set of generators of $\Sh(\mathscr{Z}, \, J_{at}, \, \underline{k})$, the kernel of $\mu$ is zero. Therefore, by \cite[Proposition III.2.5]{Gab}, one obtains the desired equivalence.
\end{remark}

\end{document}